\def\minusone#1
\def\sumplusone#1#2
\def\paragraph{\@startsection{paragraph}{4}%
  \z@\z@{-\fontdimen2\font}
  {\normalfont\bfseries}}
\newcommand{\cA}{\mathcal{A}}
\newcommand{\cB}{\mathcal{B}}
\newcommand{\cC}{\mathcal{C}}
\newcommand{\cD}{\mathcal{D}}
\newcommand{\cE}{\mathcal{E}}
\newcommand{\cF}{\mathcal{F}}
\newcommand{\cG}{\mathcal{G}}
\newcommand{\cM}{\mathcal{M}}
\newcommand{\cQ}{\mathcal{Q}}
\newcommand{\cS}{\mathcal{S}}
\newcommand{\cW}{\mathcal{W}}
\newcommand{\cat}{\cC\!\mathit{at}}
\newcommand{\set}{\cS\!\mathit{et}}
\newcommand{\sset}{\mathit{s}\set}
\newcommand{\gpd}{\mathcal Gpd}
\newcommand{\sdot}{S_{\bullet}}
\newcommand{\sdotn}[1]{S_{#1}}
\newcommand{\sdote}{\sdot^e}
\newcommand{\sdoten}[1]{S_{#1}^{e}}
\newcommand{\sdotw}{\sdot^{\mathrm{Wald}}}
\newcommand{\sdotwn}[1]{\sdotn{#1}^{\mathrm{Wald}}}
\newcommand{\sdots}{\sdot^s}
\newcommand{\sdotsn}[1]{S_{#1}^{s}}
\newcommand{\sdotpe}{\sdot^{pe}}
\newcommand{\rel}{\operatorname{rel}}
\newcommand{\sdotrel}{\sdot^{\rel}}
\newcommand{\sdotreln}[1]{S_{#1}^{\rel}}
\newcommand{\nrel}{N^{\rel}}
\newcommand{\pcat}{\mathcal{P}}
\newcommand{\epi}{\twoheadrightarrow}
\newcommand{\mono}{\rightarrowtail}
\newcommand{\excat}{\mathcal{E}x\mathcal{C}at} 
\newcommand{\pexcat}{\mathcal{PE}x\mathcal{C}at} 
\newcommand{\stqcat}{\mathcal{S}t\mathcal{QC}at}
\newcommand{\targetcat}{\cC}
\newcommand{\exactcat}{\cA}
\newcommand{\sS}{\targetcat^{\Delta^{\operatorname{op}}}}
\newcommand{\sa}[1]{{#1}^{\Sigma^{\operatorname{op}}}}
\newcommand{\sagpd}{\sa{\gpd}}
\newcommand{\sasset}{\sa{\sset}} 
\newcommand{\saS}{\sa{\targetcat}}
\newcommand{\aug}{A}
\DeclareMathOperator{\id}{id}
\DeclareMathOperator{\Fun}{Fun}
\DeclareMathOperator{\Map}{Map}
\DeclareFontFamily{OT1}{pzc}{}
\DeclareFontShape{OT1}{pzc}{m}{it}{<-> s * [1.10] pzcmi7t}{}
\DeclareMathAlphabet{\mathpzc}{OT1}{pzc}{m}{it}
\DeclareMathOperator{\op}{op}
\DeclareMathOperator{\pr}{pr}
\DeclareMathOperator{\Ar}{Ar}
\newcommand{\wW}[1]{\operatorname{W}[#1]}
\newcommand{\htimes}[1]{\underset{#1}{\overset{h}{\times}}}
\newcommand{\ttimes}[1]{\underset{#1}{\times}}
\newcommand{\twotimes}[1]{\underset{#1}{\overset{2}{\times}}}
\newcommand{\inda}{q} 
\newcommand{\indb}{r}
\newcommand{\indc}{k}
\newcommand{\indd}{\ell}
\newcommand{\sourceverd}{s^v}
\newcommand{\sourcehord}{s^h}
\newcommand{\targetverd}{t^v}
\newcommand{\targethord}{t^h}
\newcommand{\sourcever}{s_v}
\newcommand{\sourcehor}{s_h}
\newcommand{\targetver}{t_v}
\newcommand{\targethor}{t_h}
\tikzset{arrow/.style={-stealth}}
\tikzset{arrowshorter/.style={-stealth, shorten <=2pt, shorten >=2pt}}
\tikzset{arrowmuchshorter/.style={-stealth, shorten <=7pt, shorten >=6pt}}
\tikzset{mono/.style={>-stealth}} 
\tikzset{epi/.style={-twotriang}} 
\tikzset{twoarrowlonger/.style={double,double distance=1.5pt,
shorten <=5pt,shorten >=6pt,
decoration={markings,mark=at position -4pt with {\arrow[scale=1.75]{>}}},
preaction={decorate}}} 
\tikzset{twoarrowlongerred/.style={thick, red,double,double distance=1.5pt,
shorten <=5pt,shorten >=6pt,
decoration={markings,mark=at position -4pt with {\arrow[scale=1.75, red]{>}}},
preaction={decorate}}} 
\tikzset{mapstikz/.style={-stealth, 
decoration={markings,mark=at position 0pt with {\arrow[scale=0.5]{|}}}, preaction={decorate}}}
\tikzset{mapstikz2/.style={-stealth, 
decoration={markings,mark=at position 0pt with {\arrow[scale=1]{|}}}, preaction={decorate}}}
\tikzset{dot/.style={circle,draw,fill,inner sep=1pt},
    arrowinline/.style={-stealth,thick,shorten <=2pt,shorten >=2pt}}
\tikzstyle{category1} = [rectangle, rounded corners, minimum width=2.2cm, minimum height=1.5cm,text centered, draw=black, fill=green!30, text width=2.3cm]
\tikzstyle{categoryDisc} = [rectangle, rounded corners, minimum width=2.1cm, minimum height=1cm,text centered, draw=black, fill=yellow!30, text width=2.1cm]
\tikzstyle{category2} = [rectangle, rounded corners, minimum width=2.1cm, minimum height=1cm,text centered, draw=black, fill=blue!30, text width=2.1cm]
\tikzstyle{category3} = [rectangle, rounded corners, minimum width=2.8cm, minimum height=1cm,align=center, draw=black, fill=blue!30]
\tikzstyle{higharrow}=[thick,red]
\tikzset{highnode/.style={red}}
\theoremstyle{plain}   
\newtheorem{thm}{Theorem}[section]
\let\c@thm\c@thm\makeatother
\newtheorem{cor}{Corollary}[section]
\let\c@cor\c@thm\makeatother
\newtheorem{lem}{Lemma}[section]
\let\c@lem\c@thm\makeatother
\newtheorem{prop}{Proposition}[section]
\let\c@prop\c@thm\makeatother
\let\c@claim\c@thm\makeatother
\newtheorem*{unnumberedtheoremA}{Theorem A}
\newtheorem*{unnumberedtheoremB}{Theorem B}
\theoremstyle{definition}
\newtheorem{defn}{Definition}[section]
\let\c@defn\c@thm\makeatother
\newtheorem{const}{Construction}[section]
\let\c@const\c@thm\makeatother
\newtheorem{notn}{Notation}[section]
\let\c@notn\c@thm\makeatother
\let\c@assumption\c@thm\makeatother
\theoremstyle{remark}
\newtheorem{rmk}{Remark}[section]
\let\c@rmk\c@thm\makeatother
\newtheorem{ex}{Example}[section]
\let\c@ex\c@thm\makeatother
\let\c@observation\c@thm\makeatother
\let\c@digression\c@thm\makeatother
\let\c@equation\c@thm
\numberwithin{equation}{section}
\newcommand{\newrefformat}[2]{}
\crefname{lem}{Lemma}{Lemmas}
\crefname{thm}{Theorem}{Theorems}
\crefname{defn}{Definition}{Definitions}
\crefname{notn}{Notation}{Notations}
\crefname{const}{Construction}{Constructions}
\crefname{prop}{Proposition}{Propositions}
\crefname{rmk}{Remark}{Remarks}
\crefname{cor}{Corollary}{Corollaries}
\crefname{equation}{Diagram}{Diagram} 
\crefname{ex}{Example}{Examples}
\crefname{section}{Section}{Sections}
\crefname{subsection}{Section}{Sections} 
\crefname{digression}{Digression}{Digressions}
\crefname{assumption}{Assumption}{Assumption}
\begin{document}
\title{Comparison of Waldhausen constructions}

\author{Julia E. Bergner}
\address{Department of Mathematics, University of Virginia, Charlottesville, VA 22904, USA}
\email{jeb2md@virginia.edu}

\author{Ang\'{e}lica M. Osorno}
\address{Department of Mathematics, Reed College, Portland, OR 97202, USA}
\email{aosorno@reed.edu}

\author{Viktoriya Ozornova}
\address{Fakult\"at f\"ur Mathematik, Ruhr-Universit\"at Bochum, D-44780 Bochum, Germany}
\email{viktoriya.ozornova@rub.de}

\author{Martina Rovelli}
\address{Department of Mathematics,
Johns Hopkins University,
Baltimore, MD 21218, USA}
\email{mrovelli@math.jhu.edu}

\author{Claudia I. Scheimbauer}
\address{Department of Mathematical Sciences, Norwegian University of Science and Technology (NTNU), 7491 Trondheim, Norway}
\email{scheimbauer@tum.de}

\date{\today}

\subjclass[2010]{55U10, 55U35, 55U40, 18D05, 18G55, 18G30, 19D10}

\keywords{2-Segal spaces, Waldhausen $\sdot$-construction, double Segal spaces, model categories}

\dedicatory{Dedicated to the memory of Thomas Poguntke}

\thanks{The first-named author was partially supported by NSF CAREER award DMS-1659931. The second-named author was partially supported by a grant from the Simons Foundation (\#359449, Ang\'elica Osorno), the Woodrow Wilson Career Enhancement Fellowship and NSF grant DMS-1709302. The fourth-named and fifth-named authors were partially funded by the Swiss National Science Foundation, grants P2ELP2\textunderscore172086 and P300P2\textunderscore 164652. The fifth-named author was partially funded by the Bergen Research Foundation and would like to thank the University of Oxford for its hospitality. The authors would like to thank the Isaac Newton Institute for Mathematical Sciences, Cambridge, for support and hospitality during the program ``Homotopy Harnessing Higher Structures" where work on this paper was undertaken. This work was supported by EPSRC grant no EP/K032208/1.} 

\begin{abstract}
In previous work, we develop a generalized Waldhausen $\sdot$-construction whose input is an augmented stable double Segal space and whose output is a  $2$-Segal space.  Here, we prove that this construction recovers the previously known $\sdot$-constructions for exact categories and for stable and exact $(\infty,1)$-categories, as well as the relative $\sdot$-construction for exact functors.
\end{abstract}

\maketitle

\setcounter{tocdepth}{1}
\tableofcontents

\section*{Introduction}

Waldhausen's approach to understanding higher $K$-theory [Wal85] allowed for more general inputs than previous approaches, of particular interest that of retractive spaces. His formulation is categorical in nature and has been reinterpreted and generalized in several different ways for different inputs in the literature, for example in \cite{barwickKtheory}, \cite{DK}, and \cite{BGT}.

A unifying property of these variants, all called {\em $\sdot$-constructions}, is that their outputs are simplicial spaces which can then be used to construct algebraic $K$-theory.  More recently, in many cases of interest, they have been shown in \cite{DK} and \cite{GalvezKockTonks} to have the additional structure of a {\em $2$-Segal} or {\em decomposition} space.

In our paper \cite{BOORS2}, we develop a generalization
of Waldhausen's $\sdot$-construction whose input is given by an {\em augmented stable double Segal space}.  The structure of this input is meant to capture the essential features of exact categories and generalizations thereof to which one often applies known $\sdot$-constructions.  Furthermore, the output of our construction is still a $2$-Segal space.

Here, we show that our $\sdot$-construction is indeed a generalization, in that it recovers previously developed ones, namely those for exact categories, stable $(\infty,1)$-categories, proto-exact $(\infty,1)$-categories, and Waldhausen's relative $\sdot$-construction for exact functors.  More specifically, we construct nerve functors
for each of these kinds of inputs and we show in \cref{thm nerve 1,thm nerve 2,thm nerve 3,prop relative asds} that the respective outputs are augmented stable double Segal spaces.

\begin{unnumberedtheoremA}
The nerves of exact categories, stable $(\infty,1)$-categories, proto-exact $(\infty,1)$-categories, and exact functors are augmented stable double Segal spaces.
\end{unnumberedtheoremA}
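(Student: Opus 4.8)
The plan is to deduce Theorem~A from the four separate statements \cref{thm nerve 1,thm nerve 2,thm nerve 3,prop relative asds}, of which it is exactly the conjunction; the genuine content lies in those, and each is established by the same three-step recipe. First I would produce, for each type of input, a nerve functor whose value is a bisimplicial space equipped with an augmentation. Then I would verify, in turn, the three families of conditions that define an augmented stable double Segal space in \cite{BOORS2}: the horizontal and vertical Segal conditions, the augmentation condition, and the stability condition.

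For the construction step, the guiding principle is to record the two distinguished classes of morphisms of the input in the two simplicial directions. For an exact category $\cC$ I would assemble its admissible monomorphisms and admissible epimorphisms into a double category and take its double nerve, using the zero object to define the augmentation. For a stable $(\infty,1)$-category every morphism is simultaneously ingressive and egressive, so both directions record all morphisms and the bisimplicial space is built from commuting squares, again augmented by the zero object; the proto-exact case sits between these, recording the distinguished ingressive and egressive morphisms in the two directions. For the relative $\sdot$-construction I would encode an exact functor as a comparable double-categorical datum and take its nerve.

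The Segal and augmentation conditions should be essentially formal. In each simplicial direction the nerve of a category (or of an $(\infty,1)$-category) is a Segal object, and taking nerves in two directions yields a bisimplicial space satisfying both Segal conditions; the augmentation condition unwinds to the statement that the zero object is at once initial and terminal, up to the relevant contractibility of mapping spaces.

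The hard part will be the stability condition, which is the only place where the exactness of the input is genuinely used. Stability requires that certain comparison squares built from $X_{1,1}$, $X_{1,0}$, $X_{0,1}$ and $X_{0,0}$---one formed with the source maps and one with the target maps---be homotopy pullbacks, so that squares in the structure are detected as simultaneously pullback and pushout, that is, bicartesian. For an exact category this reduces to the standard facts that admissible monomorphisms and admissible epimorphisms are closed under pushout and pullback, respectively, and that the defining commuting squares are bicartesian; I would verify stability by translating it into these closure and bicartesianness properties. In the stable $(\infty,1)$-categorical setting I would instead invoke the coincidence of finite limits and colimits, so that the two pullback conditions agree and hold automatically; in the proto-exact case I would use the postulated stability of the ingressive and egressive classes under cobase and base change. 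Because a purely formal nerve argument cannot supply these (co)cartesianness properties, this verification is where the individual structure of each input must be brought to bear, and it is the step I expect to be the main obstacle.
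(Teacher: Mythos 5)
Your decomposition of Theorem A into \cref{thm nerve 1,thm nerve 2,thm nerve 3,prop relative asds}, and your nerve constructions for the first three inputs, match the paper's, and you correctly locate stability as the step where exactness genuinely enters. But two things you dismiss as ``essentially formal'' are not, and as written would fail. First, all three defining conditions are stated via \emph{homotopy} pullbacks, whereas the verifications you sketch (gluing grids, completing cospans to bicartesian squares) naturally produce equivalences into \emph{strict} pullbacks. The paper spends real effort bridging this: the augmentation maps are isofibrations resp.\ Kan fibrations because they include connected components (\cref{lemmaisofibration0,lemmaKanfibration0}), and the relevant face maps are isofibrations resp.\ Kan fibrations via a Reedy-fibrancy argument in the style of \cref{jtprop4.10} (\cref{lemmaisofibration,lemmaKanfibration}); without such fibrancy statements your computations do not address the homotopical conditions at all. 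Second, the Segal condition is not obtained by ``taking nerves in two directions,'' because the levels are restricted to grids of \emph{bicartesian} squares: the Segal map lands in a sub-object, and its being an equivalence needs the pasting lemma for bicartesian squares (\cite[Lemma 4.4.2.1]{htt} in the quasi-categorical case) together with right properness to pull the equivalence of ambient Segal spaces $\Gamma(\cQ^{[\inda]})$ back along a Kan fibration, as in \cref{stablenervedoubleSegal}. Likewise the augmentation condition for $N^s\cQ$ is proved via Joyal's fat slice $\cQ_{//z}\to\cQ$ being a Joyal equivalence, not by a generic contractibility remark.

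The genuine gap is the relative case. ``Encode an exact functor as a comparable double-categorical datum and take its nerve'' is not a construction, and a direct check on the explicit grid description would stall in the shifted simplicial direction. The paper instead defines $\nrel f$ as the pullback of $N^e f$ along $d_0\colon P^\triangleleft(N^e\cB)\to N^e\cB$, where $P^\triangleleft$ is the initial path construction of \cref{const path object} with augmentation $(P^\triangleleft Y)_{-1}=Y_{1,0}\ttimes{Y_{0,0}}Y_{-1}$ --- exactly the source of the nontrivial augmentation you do not account for. The key lemma (\cref{Ptrianglefibrant}) shows that $P^\triangleleft$ preserves injectively fibrant augmented stable double Segal objects; crucially, the Segal condition in the shifted direction is \emph{not} formal but follows from the path-space criterion of Dyckerhoff--Kapranov \cite[Theorem 6.3.2]{DK} applied to $Y_{\bullet,\indb}$ (Segal, hence $2$-Segal, hence with Segal path construction), and the stability and augmentation maps for $P^\triangleleft Y$ are identified only through zigzags of homotopy-pullback manipulations. \Cref{prop relative asds} then follows because $d_0$ is a levelwise fibration, so the strict pullback is a $2$-pullback and the three conditions for $\nrel f$ are pullbacks of the corresponding maps for $N^e\cA$, $N^e\cB$, and $P^\triangleleft(N^e\cB)$. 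None of this machinery appears in your proposal, and the shifted-Segal step in particular is a place where your ``formal'' reasoning has no purchase.
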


We go on to show in \cref{comparisonsdotexact,comparisonsdotstable,comparisonsdotexactinfinity,thm comparison relative} that the generalized $\sdot$-construction from \cite{BOORS2} agrees with the previously existing ones from \cite{waldhausen}, \cite{barwickKtheory}, \cite{BGT}, and \cite{DK}. 

\begin{unnumberedtheoremB}
The generalized $\sdot$-construction composed with the appropriate nerve functor is equivalent to the previous $\sdot$-constructions in each of these contexts.
\end{unnumberedtheoremB}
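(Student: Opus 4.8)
The plan is to deduce Theorem B from the four separate comparison statements \cref{comparisonsdotexact,comparisonsdotstable,comparisonsdotexactinfinity,thm comparison relative}, one for each type of input, since the exact-categorical, stable, proto-exact, and relative settings involve genuinely different data. By Theorem A we already know that each nerve functor $N$ takes values in augmented stable double Segal spaces, so the generalized construction $\sdot$ of \cite{BOORS2} applies and its output is automatically a $2$-Segal space; none of this has to be re-checked. What remains is a pure identification problem: in each context one must produce a natural equivalence between $\sdot N(-)$ and the classical $\sdot$-construction. In all four cases both sides are, in each simplicial degree, governed by diagrams indexed by the arrow poset $\Ar[n]$ of $[n]$ --- Waldhausen's staircase diagrams subject to exactness and vanishing-diagonal conditions on the classical side, and the configurations that $\sdot$ reads off the double Segal space on the other --- so the task reduces to matching indexing data together with the conditions cut out on it.

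I would treat the exact-category case first, as it is the most concrete and serves as a template. The nerve $N\exactcat$ encodes the double structure in which horizontal morphisms are admissible monomorphisms, vertical morphisms are admissible epimorphisms, and squares are biCartesian, with its augmentation recording the zero object and the formation of cofibers. The heart of the argument is the computation that $\sdot N\exactcat$, in simplicial degree $n$, is the nerve of Waldhausen's category $\sdotn n \exactcat$ of staircase diagrams \cite{waldhausen}: the horizontal and vertical Segal conditions reproduce the rows and columns of the staircase, the stability condition becomes the requirement that each constituent square be biCartesian, and the augmentation forces the diagonal entries to vanish. Concretely I would write down the indexing category underlying $(\sdot N\exactcat)_n$, compare it with $\Ar[n]$, check that the two families of conditions correspond entry by entry, and then verify compatibility with the simplicial face and degeneracy operators so that the degreewise identifications assemble into an equivalence of simplicial spaces.

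The stable and proto-exact $(\infty,1)$-categorical comparisons follow the same template, with strict limits replaced by their homotopy-coherent analogues: the stability condition on the double Segal space corresponds exactly to the biCartesian (pushout-pullback) squares appearing in the definitions of the $\sdot$-constructions of \cite{BGT} and \cite{barwickKtheory}, and the matching of indexing diagrams is now carried out at the level of mapping spaces rather than hom-sets. The relative case (\cref{thm comparison relative}) requires one further bookkeeping step: the input is an exact functor, whose nerve is the augmented stable double Segal space assembled from the relative data, and one must verify that $\sdot$ of this nerve recovers the enlarged indexing diagrams of the relative $\sdot$-construction while restricting correctly along the boundary to the two absolute constructions already identified.

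The main obstacle, I expect, is carrying out the first identification rigorously: proving that the generalized $\sdot$-construction, evaluated on each nerve, is computed by the expected diagram category --- strictly $1$-categorical in the exact-category case, with no spurious higher homotopy --- and that the resulting equivalence is simultaneously natural in the simplicial direction. In other words, the difficulty lies not in exhibiting a degreewise equivalence but in showing that the stability and augmentation conditions cut out precisely the staircase diagrams, compatibly with all faces and degeneracies at once; once this is secured for exact categories, the $\infty$-categorical and relative statements are variations on the same comparison.
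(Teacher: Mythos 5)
Your reduction of Theorem B to the four comparison statements \cref{comparisonsdotexact,comparisonsdotstable,comparisonsdotexactinfinity,thm comparison relative} is exactly the paper's decomposition, and your heuristic matching of staircase conditions is accurate in spirit; but the step you yourself flag as the ``main obstacle'' is precisely where your proposal has no argument, and it is not mere bookkeeping. By \cref{nerveofWn}, $\sdotn{n}(Y)\cong\Map_{\sasset}(\wW{n},Y)$, so what must be compared are $\wW{n}$-shaped configurations in the nerve with honest diagrams $\operatorname{Ar}[n]\to\exactcat$. As the paper stresses in \cref{arnvswn}, $\wW{n}$ is a preaugmented bisimplicial set in which horizontal and vertical arrows cannot be composed with one another, whereas in the ordinary category $\operatorname{Ar}[n]$ every composable pair composes; there is no ``indexing category underlying $(\sdot N^e\exactcat)_n$'' to write down, and an entry-by-entry match of conditions does not show that a $\wW{n}$-configuration determines, uniquely and coherently, all the mixed composites that a functor out of $\operatorname{Ar}[n]$ carries. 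The paper's mechanism for this is \cref{midanodyne}: the simplicial adjunction $L\colon\sasset\leftrightarrows\sset\colon T$, the identification $L\wW{n}\cong\delta^*\sigma^*\Delta[n]=Sd(\Delta[n])$, and the Ehlers--Porter theorem (\cref{epthm6.1}) that $Sd\,\Delta[n]\to N Sd[n]\cong N\operatorname{Ar}[n]$ is a weak anodyne extension, whence $\tau_1(L\wW{n})\cong\operatorname{Ar}[n]$. This subdivision argument is what yields the isomorphism $N\Fun(\operatorname{Ar}[n],\exactcat)\cong\Map_{\sasset}(\wW{n},TN\exactcat)$ in the exact case (note: \cref{comparisonsdotexact} gives an \emph{isomorphism} of groupoids, stronger than the equivalence you aim for), and, since $L\wW{n}\to N\operatorname{Ar}[n]$ is an acyclic cofibration in the Joyal model structure, a levelwise acyclic Kan fibration after mapping into a stable or proto-exact quasi-category. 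Your proposal contains no substitute for this; in the homotopy-coherent cases in particular, ``matching of indexing diagrams at the level of mapping spaces'' is exactly the assertion that needs the model-categorical input, not a route to it.

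Your sketch of the relative case also omits its actual engine. The paper does not directly match ``enlarged indexing diagrams'': it introduces the initial path construction $P^\triangleleft$ on preaugmented bisimplicial objects (\cref{const path object}), proves that $P^\triangleleft Y$ is again an augmented stable double Segal object (\cref{Ptrianglefibrant}, a nontrivial argument using the alternate stability formulation of \cref{adultstability} and the path criterion of \cite{DK}), and establishes the exchange equivalence $P^\triangleleft(\sdot Y)\xrightarrow{\simeq}\sdot(P^\triangleleft Y)$ (\cref{comparisoninitialpath}) via the acyclic cofibration $C^{\triangleleft}\wW{n}\hookrightarrow\wW{1+n}$; the comparison \cref{thm comparison relative} then follows by comparing two pullback squares, using that the maps $d_0$ are levelwise fibrations (\cref{pathfibration1,pathfibration2}) and that $N^e\cB$ is injectively fibrant (\cref{SigmaFibrancy}) so that the strict pullbacks compute homotopy pullbacks. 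None of this is a ``variation on the same comparison'' obtainable from the absolute cases by boundary restriction, and without these ingredients your outline for the relative case does not close. A smaller but symptomatic slip: the Segal, stability, and augmentation conditions are properties \emph{of the input nerve} (Theorem A), not conditions ``cut out'' in the output of $\sdot$ --- the bicartesian, monomorphism/epimorphism, and zero-object requirements appear in the comparison because they are built into the definitions of $N^e$, $N^s$, $N^{pe}$ bidegree by bidegree, and the comparison theorems then match vertex sets inside the mapping-space identification furnished by \cref{midanodyne}.
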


Furthermore, these constructions recover those from our paper \cite{BOORS}, in which we treated a discrete version of the generalized $\sdot$-construction. 

We summarize all variants in the following diagram, denoting the different variants of nerves by $N^{e}, N^{s}, N^{pe}$, and $N^{rel}$.
{\small
\begin{center}
 \begin{tikzpicture}[yscale=0.55]
\begin{scope}[xshift=-1cm]
  \draw (6,-2) node[category2] (stableMC){stable model categories};
  \draw (9,-2) node[category2] (stableIC){stable $(\infty,1)$-categories};
   \draw (6,3) node[category2] (exact){exact \\ categories};
   \draw (9,5) node[category2] (exactf){exact \\functors};
   \draw (9,1) node[category3] (protoexactIC){(proto)-exact \\ $(\infty,1)$-categories};
   
\end{scope}
   \draw (12,1) node[category1] (Sigma){augmented stable double Segal spaces};
   \draw (12,4.5) node[categoryDisc] (ASDC){augmented stable double categories};
   \draw (15,1) node[category1] (U2S){$2$-Segal spaces};
   \draw (15, 4.5) node[categoryDisc] (U2d){$2$-Segal sets}; 
   
   \draw[-stealth] (stableMC)--(stableIC);
   \draw[-stealth] (stableIC)--(protoexactIC);
   \draw[-stealth] (exact)--(protoexactIC);
    \draw[-stealth] (exact)--(exactf);
   
   \draw[right hook-stealth, shorten <=0.1cm, shorten >=0.1cm] (ASDC)--(Sigma);
   \draw[right hook-stealth, shorten <=0.1cm, shorten >=0.1cm] (U2d)--(U2S);
 
   \draw[-stealth, thick] (protoexactIC)-- node[above](a1){$N^{pe}$}(Sigma);
   \path[-stealth, thick] (stableIC) edge [bend right] node[above](a1){$N^{s}$}(Sigma);
   \path[-stealth, thick] (exact) edge [bend left] node[below](a1){$N^{e}$}(Sigma);
   \path[-stealth, thick] (exactf) edge [bend left = 15] node[above right, pos=0.2](a1){$N^{rel}$}(Sigma);
   
   \draw[stealth-] ([yshift=0.2cm]Sigma.east)--node[above](a2){$\pcat$} ([yshift=0.2cm]U2S.west);
  \draw[stealth-] (U2S)--node[below](a3){$\sdot$}(Sigma);

   \draw[stealth-] ([yshift=0.2cm]ASDC.east)--node[above](a2){$\pcat$} ([yshift=0.2cm]U2d.west);
  \draw[stealth-] (U2d)--node[below](a3){$\sdot$}(ASDC);
 \end{tikzpicture}
\end{center}
}

This work relates to a number of ideas currently being developed.  For example, some work on higher dimensional $\sdot$-constructions in the context of higher Segal spaces has been done by Poguntke \cite{Poguntke}, and it would be interesting to extend our approach to this context as well.  We also note that the relative $\sdot$-construction we consider here does not use the notion of relative $2$-Segal spaces of Walde \cite{Walde}, and Young \cite{Young}. Finally, a slight generalization of $N^{pe}$ should provide a nerve functor for Penney's augmented proto-exact $(\infty,1)$-categories \cite{PenneyHall}. 

For sake of exposition, we start with the classical, most familiar setting of exact categories. Then we move to the homotopical context of stable $(\infty,1)$-categories. Finally, we discuss the most general case of exact or proto-exact $(\infty,1)$-categories, the proofs for which are straightforward generalizations of the previous two cases.  In the last section, we change our input to exact functors between exact categories.

For the reader's convenience, we include an appendix with some of the results we use concerning quasi-categories.

\subsection*{Acknowledgements}

We would like to thank Toby Dyckerhoff for suggesting to look at the relative Waldhausen construction.

\section{Augmented stable double Segal objects}

Dyckerhoff and Kapranov define $2$-Segal objects using higher-dimensional analogues of the Segal maps used to define Segal spaces, in the sense that they arise from polygonal decompositions of polygons \cite{DK}. They originally focus on $2$-Segal spaces satisfying an additional unitality assumption, and these unital $2$-Segal spaces exactly agree with the decomposition spaces of G\'alvez-Carrillo, Kock, and Tonks \cite{GalvezKockTonks}.  However, a recent surprising result of Feller, Garner, Kock, Underhill-Proulx, and Weber \cite{FGKUW} shows that every $2$-Segal space is unital.  We thus frequently omit the adjective ``unital'' unless we find it useful for clarification.

In this paper, we consider $2$-Segal objects in groupoids and in simplicial sets; we refer to the latter as \emph{$2$-Segal spaces}.  Roughly speaking, a $2$-Segal space can be thought of as a structure similar to a category up to homotopy, but in which composition might be multiply defined, or not defined at all.  However, composition is associative 
whenever it is defined.  For a more detailed discussion of this interpretation, we refer the reader to \cite[\textsection 3.3]{DK}.  

We do not need the formal definitions here, since the comparisons that we make between $2$-Segal objects are given by weak equivalences of groupoids or simplicial sets at each simplicial level.  We refer the reader to \cite{DK} for a more rigorous treatment.

In our paper \cite{BOORS2}, we establish an equivalence between $2$-Segal objects and augmented stable double Segal objects, so we now turn to the definition of the latter, which is the structure of primary interest to us here.  We give the definition in stages, and at times in specialized variants that are suitable for our purposes here. 

Let $\targetcat$ be a monoidal combinatorial model category, again with primary examples the model categories for groupoids and simplicial sets.  We begin with double Segal objects, which model up-to-homotopy double categories.  Informally, a double category consists of objects, horizontal morphisms, vertical morphisms, and squares, satisfying some compatibility conditions.  (See \cite{Ehresmann}, \cite{FiorePaoliPronk} and \cite{GP} for details.)  The two kinds of morphisms are reflected below in the two simplicial directions. Details for the interpretation of the following definition can be found in \cite{HaugsengIteratedSpans}. 

\begin{defn} \label{doublesegal}
A bisimplicial object $Y$ in $\targetcat$ is a \emph{double Segal object} if for every $\inda,\indb \geq 1$ the maps
\[ Y_{\inda,\indb} \rightarrow \underbrace{Y_{\inda,1} \htimes{Y_{\inda,0}} \cdots \htimes{Y_{\inda,0}} Y_{\inda,1}}_{\indb} \text{ and }Y_{\inda,\indb} \rightarrow \underbrace{Y_{1,\indb} \htimes{Y_{0,\indb}} \cdots \htimes{Y_{0,\indb}} Y_{1,\indb}}_{\inda}  \]
are weak equivalences in $\targetcat$.  Here the left-hand map is induced by the inclusion of the spine into the standard $\indb$-simplex in the second variable, whereas the right-hand map is induced by the inclusion of the spine into the standard $\inda$-simplex in the first variable. 
\end{defn}

Continuing the analogy between double Segal objects and double categories, some of the bisimplicial structure will play the role of horizontal and vertical source and target maps, so we introduce the corresponding notation.  
\begin{notn} 
For ease of notation, we denote an object $([k], [\ell])$ in $\Delta \times \Delta$ simply by $(k,\ell)$.  We denote by $s^h, t^h\colon (k,0) \to (k,\ell)$ the maps given by identity in the first component and $0\mapsto 0$ and $0\mapsto \ell$ in the second component, respectively. We similarly denote by $s^v,t^v\colon (0,\ell) \to (k,\ell)$ the maps $(0\mapsto 0,\id)$ and $(0\mapsto k,\id)$, respectively.
\end{notn}

Next we consider stability of a double Segal object. The idea behind stability is the following. If a double Segal object is taken to model an up-to-homotopy double category, then stability requires a square to be determined, up to homotopy, by either its cospan or its span.  The idea is to encode the analogue of squares being simultaneously cartesian and cocartesian but in the context of a double category.

\begin{defn}\label{babystability} 
A double Segal object $Y$ in $\targetcat$ is \emph{stable} if the squares
\\
\begin{subequations}
\noindent\begin{minipage}{0.45\linewidth}
\begin{equation}\label{eq:babystabilityspan}
 \begin{tikzcd}
(0,0 )\arrow{r}{\sourcehord} \arrow[d, "\sourceverd", swap] & (0, 1) \arrow{d}{\sourceverd} \\
(1,0 )\arrow[r,"\sourcehord", swap] & (1,1)
\end{tikzcd}
\end{equation}
    \end{minipage}
    \begin{minipage}{0.1\columnwidth}\centering
    and 
    \end{minipage}
    \begin{minipage}{0.45\linewidth}
\begin{equation}\label{eq:babystabilitycospan}
 {\begin{tikzcd}
(0, 0) \arrow{r}{\targethord} \arrow[d,"\targetverd", swap] &( {0},1) \arrow{d}{\targetverd} \\
(1,0 )\arrow[r,"\targethord", swap] & (1,1)
\end{tikzcd}}
\end{equation}
    \end{minipage}
  \end{subequations}
\\induce weak equivalences
$$Y_{0,1} \htimes{Y_{0,0}} Y_{1,0} \overset{\simeq}{\longleftarrow} Y_{1,1} \overset{\simeq}{\longrightarrow} Y_{1,0} \htimes{Y_{0,0}} Y_{0,1}.$$
\end{defn}

\begin{rmk}\label{adultstability}
The above is not the original definition of stability given in \cite{BOORS2}. As we show therein, the two definitions are equivalent, and the one given here is better suited to our usage. The original definition asks for similarly defined maps
\[ Y_{0,r} \htimes{Y_{0,0}} Y_{q,0} \overset{\simeq}{\longleftarrow} Y_{q,r} \overset{\simeq}{\longrightarrow} Y_{q,0} \htimes{Y_{0,0}} Y_{0,r} \]
to be weak equivalences for all $q,r\geq 1$. See also \cite[Lemma 2.3.3]{Carlier}.
\end{rmk}

Finally, we consider the extra data of an augmentation, which is encoded by modifying the category $\Delta \times \Delta$ by adjoining a terminal object.

\begin{defn} \label{categorysigma} \label{definitionpreaugmented}
Let $\Sigma$ be the category obtained from $\Delta\times \Delta$ by adding a new terminal object, denoted by $[-1]$:
\[ \begin{tikzcd}
{[-1]}  \\
\mbox{} & 
(0,0)
\arrow{lu}{} 
 \arrow[r, arrow, shift left=1ex] \arrow[r, arrow, shift right=1ex] 
\arrow[d, arrow, shift left=1ex] \arrow[d, arrow, shift right=1ex]& 
(1,0)\arrow[l, arrow]  \arrow[r, arrow] \arrow[r, arrow, shift left=1.5ex] \arrow[r, arrow, shift right=1.5ex]
\arrow[d, arrow, shift left=1ex] \arrow[d, arrow, shift right=1ex]& 
(2,0) 
\arrow[l, arrowshorter, shift left=0.75ex] \arrow[l, arrowshorter, shift right=0.75ex] 
\arrow[d, arrow, shift left=1ex] \arrow[d, arrow, shift right=1ex]&[-0.8cm]\cdots\\
\mbox{} & 
(0,1) \arrow[r, arrow, shift left=1ex] \arrow[r, arrow, shift right=1ex]
\arrow[u, arrowshorter]
\arrow[d, arrow] \arrow[d, arrow, shift left=1.5ex] \arrow[d, arrow, shift right=1.5ex]
 & 
(1,1)  \arrow[l, arrow]  \arrow[r, arrow] \arrow[r, arrow, shift left=1.5ex] \arrow[r, arrow, shift right=1.5ex] 
\arrow[u, arrowshorter]
\arrow[d, arrow] \arrow[d, arrow, shift left=1.5ex] \arrow[d, arrow, shift right=1.5ex]& 
(2,1) \arrow[l, arrowshorter, shift left=0.75ex] \arrow[l, arrowshorter, shift right=0.75ex] 
\arrow[u, arrowshorter]
\arrow[d, arrow] \arrow[d, arrow, shift left=1.5ex] \arrow[d, arrow, shift right=1.5ex]&\cdots\\
\mbox{} & 
(0,2) \arrow[r, arrow, shift left=1ex] \arrow[r, arrow, shift right=1ex]
\arrow[u, arrowshorter, shift left=0.75ex] \arrow[u, arrowshorter, shift right=0.75ex] 
& 
(1,2)  \arrow[l, arrow]  \arrow[r, arrow] \arrow[r, arrow, shift left=1.5ex] \arrow[r, arrow, shift right=1.5ex]
\arrow[u, arrowshorter, shift left=0.75ex] \arrow[u, arrowshorter, shift right=0.75ex] & 
(2,2) \arrow[l, arrowshorter, shift left=0.75ex] \arrow[l, arrowshorter, shift right=0.75ex] 
\arrow[u, arrowshorter, shift left=0.75ex] \arrow[u, arrowshorter, shift right=0.75ex] &
\cdots\\[-0.7cm]
\mbox{} & \vdots & \vdots & \vdots &\ddots.\\
\end{tikzcd}
\]
A {\em preaugmented bisimplicial object} in $\targetcat$ is a functor $Y\colon\Sigma^{\op} \to\targetcat$, we denote the category of such functors by $\saS$.
\end{defn}

The canonical inclusion $i\colon \Delta\times \Delta \to \Sigma$ allows us to extend \cref{doublesegal,,babystability} for stable and double Segal objects from bisimplicial objects to preaugmented bisimplicial objects by requiring the condition in question on the underlying bisimplicial object; we refer the reader to \cite{BOORS2} for details.

We can now define augmented double Segal objects.  While there is a more general definition of augmented bisimplicial object, the following definition is equivalent in the presence of the double Segal condition. 

\begin{defn} \label{babyaugmentation}
A preaugmented double Segal object $Y$ in $\targetcat$ is \emph{augmented} if the composites
\begin{align}
 Y_{1,0} {^{\targetver}\htimes{Y_{0,0}}} Y_{-1}\xrightarrow{\pr_1} Y_{1,0} \xrightarrow{\sourcever} Y_{0,0} \label{eqn: augmented 1A}\\
Y_{0,1}{^{\sourcehor}\htimes{Y_{0,0}}}  Y_{-1}\xrightarrow{\pr_1} Y_{0,1} \xrightarrow{\targethor} Y_{0,0} \label{eqn: augmented 2A}
\end{align}
are weak equivalences in $\targetcat$.
\end{defn}

The ordinal sum $\Delta\times\Delta\to\Delta$ extends to a functor $p \colon \Sigma \to \Delta$ along the canonical inclusion $i$ satisfying
\[ p\left((\inda,\indb)\to[-1]\right):=\left([\inda+1+\indb]\to[0]\right).\]
In particular, on objects $p$ is given by
\[p(\inda, \indb):=[\inda+1+\indb]\quad\text{ and }\quad p(-1):=[0]. \]
The induced functor $p^*\colon \sS\to\saS$ admits a right adjoint 
$p_*\colon\saS\to\sS $ given by right Kan extension.  

We now introduce our generalized $\sdot$-construction, the central object of study in this paper. 

\begin{defn} \label{sdot}
The \emph{generalized $\sdot$-construction} is the functor $\sdot=p_*$, the right adjoint to $p^*$.
\end{defn}

The main result of our previous paper is that this adjunction is indeed very strong, in the following sense.

\begin{thm}[\cite{BOORS2}] \label{QuillenEquivalence}
The adjoint pair $(p^*, \sdot)$  induces a Quillen equivalence between a model structure for $2$-Segal objects and a model structure for augmented stable double Segal objects.
\end{thm}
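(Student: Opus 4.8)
The plan is to realize both homotopy theories as left Bousfield localizations of injective model structures and then to show that the restriction--Kan-extension adjunction descends to these localizations and becomes a Quillen equivalence. Since $\targetcat$ is combinatorial, the diagram categories $\sS$ and $\saS$ carry injective model structures, with cofibrations the (levelwise) monomorphisms and weak equivalences detected levelwise, and these admit the left Bousfield localizations we need. I would obtain the model structure for $2$-Segal objects as the localization of $\sS$ at the set of maps encoding the polygonal $2$-Segal decompositions, and the model structure for augmented stable double Segal objects as the localization of $\saS$ at the union of the three families of maps encoding the double Segal condition of \cref{doublesegal}, the stability condition of \cref{babystability}, and the augmentation condition of \cref{babyaugmentation}. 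By construction the fibrant objects are exactly the injectively fibrant $2$-Segal objects on one side and the augmented stable double Segal objects on the other.

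Next I would check that $(p^*,\sdot)$ is a Quillen pair. As $p^*$ is restriction along $p^{\op}$, it preserves monomorphisms and levelwise weak equivalences, so it is left Quillen for the injective structures, with right adjoint $\sdot=p_*$. To see that it remains a Quillen pair after localization I would invoke the universal property of left Bousfield localization: it suffices to check that $p^*$ carries each $2$-Segal localizing map to a weak equivalence in the localized augmented stable double Segal structure. Equivalently, and more tractably, one checks on right adjoints that $\sdot$ sends each injectively fibrant augmented stable double Segal object to a $2$-Segal object; this is the assertion that the output of the generalized $\sdot$-construction is $2$-Segal, verified from the right Kan extension formula along $p^{\op}$.

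To upgrade the Quillen pair to a Quillen equivalence I would verify that the derived unit and derived counit are weak equivalences. Because every object is cofibrant in these injective structures and $p^*$ preserves all weak equivalences, this reduces to two reconstruction statements on fibrant objects. First, for a $2$-Segal object $X$ I would show that $p^*X$, whose value at $(\inda,\indb)$ is $X_{\inda+1+\indb}$ and at $[-1]$ is $X_0$, is again augmented stable double Segal, and that the unit $X\to\sdot\,p^*X$ is a levelwise weak equivalence. Second, for an augmented stable double Segal object $Y$ I would show that the counit $p^*\sdot\,Y\to Y$ is a levelwise weak equivalence. Both statements follow by computing $\sdot\,Y=p_*Y$ as a homotopy limit over the comma categories of $p^{\op}$ and identifying the result level by level: the double Segal condition reconstructs the higher bisimplicial levels from the $(1,\indb)$ and $(\inda,1)$ pieces, stability reconstructs each square level $Y_{\inda,\indb}$ from its span and cospan, and the augmentation of \cref{babyaugmentation} supplies the value at the adjoined terminal object $[-1]$ and controls the fibers over the ordinal-sum map $p$.

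The main obstacle is this last reconstruction, namely proving that the counit $p^*\sdot\,Y\to Y$ is an equivalence. The difficulty is simultaneously combinatorial and homotopical: one must show that the comma categories indexing the right Kan extension are homotopy cofinal (or have contractible nerves), so that the strict limits defining $\sdot$ compute the intended homotopy limits, and then that the span and cospan data produced by the $\sdot$-construction, together with the augmentation, genuinely reassemble the full square $Y_{\inda,\indb}$. This is precisely the step where stability and augmentation are indispensable, and where the higher form of stability recorded in \cref{adultstability} is convenient, since it lets one reconstruct all of $Y_{\inda,\indb}$ at once rather than only the $(1,1)$-squares.
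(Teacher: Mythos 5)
You should first note that this paper does not actually prove \cref{QuillenEquivalence}: the theorem is imported from \cite{BOORS2} without proof, so the only meaningful comparison is with the proof given there. At the level of architecture, your outline does match that proof's strategy: injective model structures on $\sS$ and $\saS$ (available since $\targetcat$ is combinatorial), left Bousfield localization at the $2$-Segal maps on one side and at the maps encoding \cref{doublesegal,babystability,babyaugmentation} on the other, descent of the Quillen pair via the right-adjoint criterion for localizations, and reduction to the derived unit and counit using cofibrancy of all objects. The identification $\sdotn{n}(Y)\cong\Map_{\sasset}(\wW{n},Y)$ of \cref{nerveofWn} is indeed the correct computational handle for the levelwise analysis.

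However, as a proof there is a genuine gap: the two substantive claims are asserted rather than established, and they are the entire content of the theorem. For the unit, note that the augmentation condition of \cref{babyaugmentation} applied to $p^*X$ reads
\[ X_2 \htimes{X_1} X_0 \longrightarrow X_1, \]
where $X_0\to X_1$ is a degeneracy; this is precisely the \emph{unitality} of the $2$-Segal object $X$, an input your outline never invokes (in \cite{BOORS2} unitality is a standing hypothesis, and only the later result of \cite{FGKUW} makes it automatic). Moreover, the derived unit requires an injective fibrant replacement of $p^*X$ before applying $\sdot$, since $p^*$ does not preserve injective fibrancy; your claim that the strict unit $X\to\sdot p^*X$ is a levelwise equivalence does not address this. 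For the counit, the cofinality of comma categories is a red herring: since $(p^*,p_*)$ is a Quillen pair for the injective structures, $p_*Y$ already computes the derived functor for injectively fibrant $Y$, and no contractibility of indexing nerves is needed. The real difficulty, which you correctly identify but do not resolve, is showing that the staircase-shaped span/cospan data recorded by $\Map_{\sasset}(\wW{n},Y)$ reassembles all of $Y$, i.e.\ that $p^*p_*Y\to Y$ is a levelwise weak equivalence. In \cite{BOORS2} this is not a one-step reconstruction via \cref{adultstability} but a lengthy inductive argument occupying the bulk of that paper; gesturing at ``stability and augmentation reassemble the square'' leaves the theorem unproved at exactly its hardest point.
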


The following example will be key in later arguments.

\begin{ex}[\cite{BOORS2}]
\label{nerveofWn}
\label{rmkpstar(Delta)=W}
Consider the preaugmented bisimplicial set
\[ \wW{n}\coloneqq p^*\Delta[n]. \]

A careful verification shows that, for any $n\ge0$, the preaugmented bisimplicial set $\wW{n}$ is given by
\[ \wW{n}_{\indc, \indd}=\left\{(i_0,\dots,i_\indc,j_0,\dots,j_\indd) \mid 0\leq i_0\leq\dots\leq i_{\indc}\leq j_0\leq\dots\leq j_{\indd}\leq n\right\} \]
for any $\indc, \indd \geq 0$ and by
\[ \wW{n}_{-1}=\left\{(i,i) \mid 0\leq i\leq n\right\}. \]
Degeneracy maps are given by repeating the appropriate index, face maps by removing according to that index, and the augmentation map by the canonical inclusion.  In particular, $\wW{0}$ is the representable functor associated to the object $[-1]$.

Informally, we can think of $\wW{n}$ as (the nerve of) a double category pictured below for $n=4$:
\begin{equation}
\label{PictureW4}
\begin{tikzpicture}[scale=0.7, inner sep=0pt, font=\footnotesize, baseline=(current  bounding  box.center)]
\begin{scope}
     \draw (1,0) node(a00){00};
\draw (2,0) node(a01){{\footnotesize  01 }};
\draw (2,-1) node(a11) {{\footnotesize  11 }};
\draw  (3, -1)   node(a12){{\footnotesize  12 }};
\draw  (3, 0)   node(a02){{\footnotesize  02 }};
\draw (3,-2) node (a22){{\footnotesize  22 }};
\draw  (4, 0)   node (a03){{\footnotesize  03 }};
\draw  (4, -1)   node (a13){{\footnotesize  13 }};
\draw (4, -2)   node (a23){{\footnotesize  23 }};
\draw (4, -3) node (a33){{\footnotesize  33 }};
\draw (5, 0)   node (a04){{\footnotesize  04 }};
\draw  (5, -1)  node (a14){{\footnotesize  14 }};
\draw  (5, -2)  node (a24){{\footnotesize  24 }};
\draw  (5, -3)node (a34){{\footnotesize  34 }};
\draw (5, -4) node (a44){{\footnotesize  44. }};
\draw[mono] (a00)--(a01);
\draw[mono] (a11)--(a12);
\draw[mono] (a01)--(a02);
\draw[mono] (a02)--(a03);
\draw[mono] (a12)--(a13);
\draw[mono] (a22)--(a23);
\draw[mono] (a03)--(a04);
\draw[mono] (a13)--(a14);
\draw[mono] (a23)--(a24);
\draw[mono] (a33)--(a34);
\draw[epi] (a01)--(a11);
\draw[epi] (a12)--(a22);
\draw[epi] (a02)--(a12);
\draw[epi] (a03)--(a13);
\draw[epi] (a13)--(a23);
\draw[epi] (a23)--(a33);
\draw[epi] (a04)--(a14);
\draw[epi] (a14)--(a24);
\draw[epi] (a24)--(a34);
\draw[epi] (a34)--(a44);
\end{scope}

\begin{scope}[yshift=-0.3cm]
  \draw[twoarrowlonger] (2.2,0.1)--(2.8,-0.5);
  \draw[twoarrowlonger] (3.2,0.1)--(3.8,-0.5);
  \draw[twoarrowlonger] (3.2,-0.9)--(3.8,-1.5);
  \draw[twoarrowlonger] (4.2,0.1)--(4.8,-0.5);
  \draw[twoarrowlonger] (4.2,-1.9)--(4.8,-2.5);
  \draw[twoarrowlonger] (4.2,-0.9)--(4.8,-1.5);
\end{scope}
\end{tikzpicture}
\end{equation}
The connection can be made more precise, in particular taking the augmentation into account, via the augmented nerve functor. 
Considering $\wW{n}$ as a discrete augmented stable bisimplicial space, one can show that for a preaugmented bisimplicial space $Y$, we can identify
\[ \sdotn{n}(Y)\cong \Map_{\sasset}(\wW{n},Y). \]
\end{ex}

\section{The $\sdot$-construction for exact categories} \label{exactcategories}

In this section, we first define a functor which takes an exact category to an augmented stable double Segal groupoid, which we call the exact nerve.  Then we show that applying the generalized $\sdot$-construction to this output recovers more familiar $\sdot$-constructions for exact categories.

We use Keller's reformulation \cite[\S A.1]{KellerCh} 
of Quillen's original definition of an exact category \cite[\S 2]{QuillenK}; we refer the reader to both references for more details. 

\begin{defn}
An \emph{exact category} consists of an additive category $\exactcat$ together with a family $\cS$ of exact sequences 
\[ A \stackrel{i}{\mono} B \stackrel{p}{\epi} C. \]
We refer to the map $i$ as an \emph{admissible monomorphism} and the map $p$ as an \emph{admissible epimorphism}, subject to the following axioms.
\begin{enumerate}
\item The family $\cS$ is closed under isomorphisms.
    
\item The identity map of a zero object $\id_0$ is an admissible epimorphism. 
    
\item Admissible epimorphisms are closed under composition.
    
\item For any morphism $f\colon C\to C'$ in $\exactcat$ and any admissible epimorphism $p'\colon B' \to C'$, the pullback
\[ \tikzcdset{arrow style=tikz, diagrams={>=stealth'}}
    \begin{tikzcd}
     B \arrow[d, "p" swap] \arrow[r] & B'\arrow[d, "p'", epi]\\
     C \arrow[r, "f" swap] & C'
    \end{tikzcd}
    \]
    exists and the map $p$ is an admissible epimorphism.
    
    \item For any morphism $g\colon A\to A'$ in $\exactcat$ and any admissible monomorphism $i\colon A \to B$, the pushout
    \[ \tikzcdset{arrow style=tikz, diagrams={>=stealth'}}
    \begin{tikzcd}
     A \arrow[d, "g" swap] \arrow[r, mono, "i"] & B\arrow[d]\\
     A' \arrow[r, "i'" swap] & B'
    \end{tikzcd} \]
exists and the map $i'$ is an admissible monomorphism.
\end{enumerate}
We denote the subcategories of admissible monomorphisms and epimorphisms by $\cM$ and $\cE$, respectively.

An {\em exact functor} is an additive functor between exact categories which preserves exact sequences. We denote the category of exact categories and exact functors by $\excat$. 
\end{defn}

One can also formulate the axioms of an exact category in terms of the subcategories $\cM$ and $\cE$, which makes them easier to generalize to a homotopical version, as in \cite{barwickq}; we discuss related ideas further in \cref{exactinfinitycategories}. When no confusion arises, we denote an exact category $(\exactcat, \cS)$ simply by~$\exactcat$.

\subsection{The exact nerve}

Throughout this section, we assume that $\exactcat$ is an exact category and define a nerve-type construction which takes $\exactcat$ to an augmented stable double Segal groupoid. For any category $\exactcat$, we denote by $\exactcat_{\text{iso}}$ its maximal subgroupoid.

\begin{defn}
The \emph{exact nerve} of an exact category $\exactcat$ is the preaugmented bisimplicial groupoid $N^e\exactcat\colon \Sigma^{\op}\to \gpd$ defined as follows.
\begin{enumerate}
\item The augmentation $(N^e\exactcat)_{-1}$ is the full subgroupoid of $\exactcat_{\textrm{iso}}$ span\-ned by the zero objects.

\item \label{GridsBisimplices}
The groupoid in degree $(\inda,\indb)$ is the full subgroupoid $(N^e\exactcat)_{\inda, \indb}$ of $\Fun([\inda]\times[\indb],\exactcat)_{\textrm{iso}}$ consisting of $(\inda\times \indb)$-grids in $\exactcat$ in which all horizontal morphisms are in $\cM$, all vertical morphisms are in $\cE$, and all squares are bicartesian.
\end{enumerate}
The bisimplicial structure is induced by the bi-cosimplicial structure on the collection of categories $[\inda]\times[\indb]$ for $\inda, \indb \geq 0$.  The additional map $(N^e\exactcat)_{-1}\to (N^e\exactcat)_{0,0}$ is the canonical inclusion of the groupoid of zero objects into the maximal subgroupoid of $\exactcat$.

Observe that since an exact functor preserves the collections $\cM$ and $\cE$ and the bicartesian squares, the exact nerve can be defined on exact functors and defines a functor
$$N^e\colon \excat \longrightarrow \sagpd.$$
\end{defn}

Let us look at \eqref{GridsBisimplices} in more detail. Objects in $(N^e\exactcat)_{\inda, \indb}$ consist of diagrams $F\colon[\inda]\times[\indb]\to\exactcat$ such that:
\begin{itemize}
\item for any $0\leq i\leq \inda$ and $0\leq j\leq k \leq \indb$, the morphism $F(i,j)\to F(i,k)$
is in $\cM$; 

\item for any $0\leq i\leq k\leq  \inda$ and $0\leq j\leq \indb$, the morphism $F(i,j) \to F(k,j)$ is in $\cE$; and
\item for any $0\leq i\leq k\leq  \inda$ and $0\leq j\leq \ell\leq \indb$, the square
\begin{center}
 \begin{tikzpicture}
\def\l{1cm}
  \begin{scope}
  \draw (0,0) node (a00){$F(i,j)$};
\draw (2.5*\l,0) node (a01){$F(i,\ell)$};
\draw (0,-\l) node (a10){$F(k,j)$};
\draw (2.5*\l,-\l) node (a11){$F(k,\ell)$};
\draw[mono] (a00)--(a01);
\draw[epi] (a01)--(a11);
\draw[mono] (a10)--(a11);
\draw[epi] (a00)--(a10);
  \end{scope}
  \end{tikzpicture}
  \end{center}
  is bicartesian.
\end{itemize}

The objects of this groupoid can be pictured as diagrams in $\exactcat$ of the form
\begin{center}
    \begin{tikzpicture}[scale=0.8]
     \draw[fill] (0,0) circle (1pt) node (a00){};
     \draw[fill] (1,0) circle (1pt) node (a01){};
     \draw[fill] (2,0) circle (1pt) node (a02){};
     \draw (3.2,0) node (a03){$\ldots$}; 
     \draw[fill] (4.4,0) circle (1pt) node (a04){};
     
          \draw[fill] (0,-1) circle (1pt) node (a10){};
     \draw[fill] (1,-1) circle (1pt) node (a11){};
     \draw[fill] (2,-1) circle (1pt) node (a12){};
     \draw (3.2,-1) node (a13){$\ldots$}; 
     \draw[fill] (4.4,-1) circle (1pt) node (a14){};
     
     \draw (0,-2.2) node (a20){$\vdots$};
     \draw (1, -2.2) node (a21){$\vdots$};
     \draw (4.4,-2.2) node (a24){$\vdots$};
     
     \draw (2.7, -2) node (d){$\ddots$};
     
     \begin{scope}[yshift=-2.2cm]
     \draw[fill] (0,-1) circle (1pt) node (an0){};
     \draw[fill] (1,-1) circle (1pt) node (an1){};
     \draw[fill] (2,-1) circle (1pt) node (an2){};
     \draw (3.2,-1) node (an3){$\ldots$}; 
     \draw[fill] (4.4,-1) circle (1pt) node (an4){};
     \end{scope}
     
     \draw[mono] (a00)--(a01);
     \draw[mono] (a01)--(a02);
     \draw[mono] (a02)--(a03);
    \draw[mono] (a03)--(a04);
    
    \draw[mono] (a10)--(a11);
     \draw[mono] (a11)--(a12);
     \draw[mono] (a12)--(a13);
    \draw[mono] (a13)--(a14);
    
      \draw[mono] (an0)--(an1);
     \draw[mono] (an1)--(an2);
     \draw[mono] (an2)--(an3);
    \draw[mono] (an3)--(an4);
    
    \draw[epi] (a00)--(a10);
    \draw[epi] (a01)--(a11);
    \draw[epi] (a02)--(a12);
    \draw[epi] (a04)--(a14);
    
    \draw[epi] (a10)--(a20);
    \draw[epi] (a20)--(an0);
    
    \draw[epi] (a11)--(a21);
    \draw[epi] (a21)--(an1);
    
    \draw[epi] (a14)--(a24);
    \draw[epi] (a24)--(an4);

    \draw[decorate, decoration= {brace, raise=5pt, amplitude=10pt, mirror}, thick] (a00.north)--(an0.south)node [midway,xshift=-0.8cm] {\footnotesize $\inda$};
     \draw[decorate, decoration= {brace, raise=5pt, amplitude=10pt, mirror}, thick] (an0.west)--(an4.east)node [midway,yshift=-0.8cm] {\footnotesize $\indb$}; 

     \draw (an4) node[anchor=north west, yshift=0.1cm] {$.$};
    \end{tikzpicture}
\end{center}

The remainder of this subsection contains the proof of the following theorem, which in particular shows that the exact nerve functor factors though the full subcategory of augmented stable double Segal groupoids. 

\begin{thm} \label{thm nerve 1}
The exact nerve $N^e(\exactcat)$ is an augmented stable double Segal groupoid.
\end{thm}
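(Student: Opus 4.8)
The plan is to verify in turn the three defining properties of an augmented stable double Segal groupoid for $N^e\exactcat$: the double Segal condition of \cref{doublesegal}, stability in the sense of \cref{babystability}, and the augmentation condition of \cref{babyaugmentation}. A single observation streamlines all three verifications: the face and restriction functors that assemble $N^e\exactcat$ are isofibrations of groupoids, since any isomorphism in the target lifts---one transports the unconstrained part of a grid along it. As at least one leg of each pullback below is such a functor, every strict pullback appearing in the argument computes the corresponding homotopy pullback, and it suffices to exhibit a strict comparison functor as an equivalence of groupoids.

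For the double Segal condition I would argue that the Segal map is already an isomorphism before imposing the exactness conditions. Using $\Fun([\inda]\times[\indb],\exactcat)\cong\Fun([\indb],\Fun([\inda],\exactcat))$ together with the strict Segal isomorphism for nerves of categories, and noting that the core functor $(-)_{\mathrm{iso}}$ is a right adjoint and so preserves the spine pullbacks, the groupoid $\Fun([\inda]\times[\indb],\exactcat)_{\mathrm{iso}}$ is identified with the iterated pullback of $\indb$ copies of $\Fun([\inda]\times[1],\exactcat)_{\mathrm{iso}}$ over $\Fun([\inda]\times[0],\exactcat)_{\mathrm{iso}}$, and symmetrically in the first variable. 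It then remains to check that the grid conditions are \emph{local}: a grid lies in $(N^e\exactcat)_{\inda,\indb}$ if and only if all of its adjacent strips do. This uses the exact structure only mildly, namely that $\cM$ and $\cE$ are closed under composition (so the condition on all horizontal, resp.\ vertical, morphisms reduces to the adjacent ones) and that bicartesian squares paste (so the condition on all squares reduces to the unit squares). Matching conditions strip-by-strip identifies $(N^e\exactcat)_{\inda,\indb}$ with the iterated pullback, and both Segal maps are equivalences.

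The heart of the proof is stability, and I expect this to be the main obstacle, as it is the only place where the exact-category axioms are used in an essential rather than formal way. By \cref{adultstability} it suffices to treat the $1\times1$ case, so after the isofibration reduction I must show that recording a bicartesian square's span (the admissible mono and admissible epi out of the initial corner) and its cospan (the admissible mono and epi into the terminal corner) yield equivalences
\[ (N^e\exactcat)_{0,1}\htimes{(N^e\exactcat)_{0,0}}(N^e\exactcat)_{1,0}\xleftarrow{\ \simeq\ }(N^e\exactcat)_{1,1}\xrightarrow{\ \simeq\ }(N^e\exactcat)_{1,0}\htimes{(N^e\exactcat)_{0,0}}(N^e\exactcat)_{0,1}. \]
For the span map the strict target is the groupoid of pairs consisting of an admissible mono $A\mono B$ and an admissible epi $A\epi C$ with common source. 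Essential surjectivity is precisely axiom (5), completed by the standard fact that the pushout of an admissible mono along an admissible epi is bicartesian with the opposite edges again admissible; fully faithfulness follows because this pushout is unique up to unique isomorphism, so every automorphism of a span extends uniquely to an automorphism of the completed square. The cospan map is handled dually, using axiom (4) to produce the bicartesian completion of an admissible epi and mono with common target. Packaging these exact-category facts about bicartesian completions into equivalences of groupoids is the crux.

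Finally, for the augmentation I would unwind the composites \eqref{eqn: augmented 1A} and \eqref{eqn: augmented 2A}. After replacing the homotopy pullbacks by strict ones, the first composite is identified with the functor from the groupoid of admissible epis with zero target, $A\epi 0$, to the groupoid $(N^e\exactcat)_{0,0}$ of all objects given by the source; it is essentially surjective because $A\epi 0$ is an admissible epi for every object $A$ (a standard consequence of the axioms, for instance via split exactness), and fully faithful because the zero objects form a contractible groupoid and the map to a zero object is unique. The second composite is the dual statement for admissible monos $0\mono B$ with the target map. Having verified the double Segal, stability, and augmentation conditions, we conclude that $N^e\exactcat$ is an augmented stable double Segal groupoid.
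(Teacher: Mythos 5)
Your proposal is correct and follows essentially the same route as the paper: reduce the homotopy pullbacks to strict ones via the isofibration lemmas (the paper's \cref{lemmaisofibration0,lemmaisofibration}), then verify the double Segal maps by gluing/pasting of bicartesian squares, stability via the pushout--pullback completions guaranteed by the exact-category axioms, and augmentation via the contractibility of the groupoid of zero objects and uniqueness of maps to them. Your only deviations are cosmetic strengthenings---observing that the strict Segal maps are in fact isomorphisms of groupoids, and arguing stability by essential surjectivity plus full faithfulness where the paper exhibits a functorial inverse by choosing pullbacks---both of which are sound.
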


Since the conditions of being stable, augmented, and double Segal are given by conditions described in terms of homotopy pullbacks, we first need a good model for homotopy pullbacks of groupoids.  To this end, we recall the model structure on groupoids, which can be found in \cite[Theorem 2.1]{Hollander}.

\begin{prop}\label{ModelStructureGpd}
The category $\gpd$ of groupoids admits a model structure, often called the \emph{canonical model structure}, in which the weak equivalences are equivalences of categories.  The fibrations are the \emph{isofibrations}, which are functors that have the right lifting property with respect to the inclusions $[0]\hookrightarrow I$ of a point into the free isomorphism category 
\[ I = \left\{
\begin{tikzpicture}[baseline=(base)]
\path (0,0) node[dot] (l) {} +(.5,0) node {\scriptsize $\cong$} +(1,0) node[dot](r) {} +(0,-3pt) coordinate (base); \path (l) edge[bend left, arrowinline] (r); \path (r) edge[bend left, arrowinline] (l);
\end{tikzpicture}
\right\}. \]
\end{prop}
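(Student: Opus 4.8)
The statement is classical (and is exactly the content of the cited reference), so the plan is to realize the three declared classes as a \emph{cofibrantly generated} model structure and verify the hypotheses of the standard recognition theorem (Kan's theorem) for such structures. First I would record that $\gpd$ is complete and cocomplete: it is a locally presentable reflective subcategory of $\cat$ (the reflector freely inverts all morphisms), so small limits are computed as in $\cat$ and colimits exist by reflection; moreover every object is presentable, so both the small object argument and the recognition machinery are available. The class $W$ of equivalences of categories visibly satisfies two-out-of-three and is closed under retracts, since essential surjectivity and full faithfulness are each preserved under these operations.

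Next I would fix generating sets. For the generating acyclic cofibrations I take $J = \{[0]\hookrightarrow I\}$, the inclusion of an endpoint into the free isomorphism; by the very definition in the statement, $J\text{-inj}$ is precisely the class of isofibrations, so the fibrations are declared to be $J\text{-inj}$. For the generating cofibrations I take the evident set $I_{\gpd} = \{\emptyset\to[0],\ [0]\sqcup[0]\to I,\ Q\to I\}$, where the first map detects surjectivity on objects, the second detects fullness, and $Q\to I$ is a coequalizer-type generator collapsing two parallel isomorphisms, which detects faithfulness. A direct lifting analysis then shows that a functor lies in $I_{\gpd}\text{-inj}$ exactly when it is surjective on objects and fully faithful, that is, a \emph{surjective equivalence}; in particular $I_{\gpd}\text{-inj}\subseteq W$. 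Applying the small object argument to $I_{\gpd}$ and to $J$ supplies the two required functorial factorizations.

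To conclude via the recognition theorem it remains to check the compatibility of the two resulting weak factorization systems. One inclusion, $I_{\gpd}\text{-inj}\subseteq J\text{-inj}$, is immediate, since a surjective equivalence lifts isomorphisms and is therefore an isofibration. For the reverse compatibility I would verify $J\text{-inj}\cap W\subseteq I_{\gpd}\text{-inj}$: if $F$ is both an isofibration and an equivalence, then for any object $d$ downstairs, essential surjectivity gives $c$ with $F(c)\cong d$, and lifting this isomorphism through the isofibration yields $c'$ with $F(c')=d$, so $F$ is surjective on objects and hence a surjective equivalence. Finally I would verify the acyclicity condition $J\text{-cell}\subseteq W$: a single pushout of $[0]\hookrightarrow I$ freely adjoins to a groupoid a new object together with an isomorphism to an existing one, an inclusion which is fully faithful and essentially surjective, and a transfinite induction shows that transfinite composites of such inclusions remain equivalences, each newly created object being isomorphic to one already present. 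Together with the trivial inclusion $J\subseteq I_{\gpd}\text{-cof}$, these checks are exactly the hypotheses of the recognition theorem and yield the asserted model structure, with cofibrations the functors injective on objects.

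The main obstacle is this last acyclicity/compatibility bundle: in particular, confirming that transfinite $J$-cell complexes remain in $W$ (a transfinite-induction bookkeeping argument on objects and hom-sets) and that an isofibration which is an equivalence is automatically surjective on objects. Everything else, namely completeness and cocompleteness, the two-out-of-three and retract properties of $W$, and the lifting characterization identifying $I_{\gpd}\text{-inj}$ with surjective equivalences, is formal.
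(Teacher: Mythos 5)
Your argument is correct, but note that the paper does not prove this proposition at all: it is recalled verbatim from \cite[Theorem 2.1]{Hollander}, so any actual proof is necessarily a different route from the paper's. What you give is the standard direct construction via the recognition theorem for cofibrantly generated model structures, and the details check out: $\gpd$ is locally finitely presentable (it is reflective in $\cat$, with limits created in $\cat$ and colimits obtained by reflection; in fact it is also coreflective via the maximal-subgroupoid functor, so colimits of groupoids are even computed in $\cat$), which makes the small object argument available; your $I_{\gpd}$-injectives are exactly the surjective equivalences, provided $Q$ is taken to be the free groupoid on two parallel arrows, mapped to $I$ by collapsing them (RLP against $Q\to I$ is faithfulness, RLP against $[0]\sqcup[0]\to I$ is fullness, which for groupoids suffices); the pushout computation behind $J\text{-cell}\subseteq W$ is right precisely because $I$ is free on a single isomorphism, so a pushout along $[0]\to\cG$ literally adjoins one new object with a freely adjoined isomorphism to an existing object, leaving hom-sets between old objects untouched, and your transfinite induction then goes through; and your two compatibility inclusions, together with retract-closure of $W$ to upgrade $J\text{-cell}\subseteq W$ to $J\text{-cof}\subseteq W$, are exactly the hypotheses of the recognition theorem. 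An alternative route, closer in spirit to how the paper actually uses the result (cf.\ its invocation of \cite[Lemma 2.4]{Hollander}), is to create the model structure along the nerve $N\colon\gpd\to\sset$ from the Quillen model structure, since $N$ preserves and reflects both the weak equivalences and the fibrations; that approach yields the nerve-compatibility statements the paper needs later essentially for free, whereas your recognition-theorem proof is self-contained and has the advantage of producing explicit generating (acyclic) cofibrations and the identification of the cofibrations as the functors injective on objects.
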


\begin{rmk} \label{rem sset grpd} 
A useful fact, which can be observed from the definition of isofibration, is that the inclusion of some of the connected components of a groupoid is in fact an isofibration.
\end{rmk}

Homotopy pullbacks in this model structure are often referred to as \emph{2-pullbacks}. We denote by $\cD \twotimes{\cF}\cG$ the 2-pullback of a diagram of groupoids
\[ \cD\longrightarrow\cF\longleftarrow\cG. \]
In particular, the $2$-pullback construction is invariant under equivalences of groupoids. 

The usual nerve construction for groupoids defines a right Quillen functor between the canonical model structure on $\gpd$ and the model structure on $\sset$ due to Quillen. Indeed, even more is true: both weak equivalences and fibrations are created in $\sset$, as shown in \cite[Lemma 2.4]{Hollander}.   

Hence, given a preaugmented bisimplicial groupoid $N^e\exactcat$ we can compose with the nerve functor to obtain the preaugmented bisimplicial space
\[ \Sigma^{\op} \xrightarrow{N^e\exactcat} \gpd \xrightarrow{N} \sset. \] 
Observe that the conditions for being double Segal, augmented, and stable are described using homotopy pullbacks. Using the properties of the nerve above, we can see that a preaugmented bisimplicial groupoid satisfies those conditions precisely when its geometric realization 
satisfies the same conditions in the context of preaugmented bisimplicial spaces. In particular, the same fibrancy properties continue to hold in the model structure for groupoids. 

With this close relationship between groupoids and simplicial sets in mind, we consider the following special case of \cite[Proposition 1.3.8]{DK}.

\begin{prop} \label{2hpullback}
For any diagram of groupoids $\cD\rightarrow\cF\leftarrow\cG$,
the nerve of the 2-pullback $\cD \times^2_{\cF}\cG$
yields a model for the homotopy pullback of the corresponding nerves.  In other words, the canonical map
\[ N(\cD \twotimes{\cF}\cG)\xrightarrow{\simeq} N\cD \htimes{N\cF}N\cG \]
is a weak equivalence of simplicial sets.
\end{prop}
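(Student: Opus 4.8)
The plan is to exploit that the nerve functor $N\colon\gpd\to\sset$ is right Quillen between the canonical model structure and the Kan--Quillen model structure, together with the invariance (noted just above) of both the $2$-pullback and the homotopy pullback under equivalences. Since every groupoid is fibrant in the canonical model structure and every nerve of a groupoid is a Kan complex, the entire statement reduces to the assertion that $N$ carries a homotopy pullback of groupoids to a homotopy pullback of simplicial sets, and the cleanest route is to compute both sides through a fibration. First I would recall the explicit model for the $2$-pullback: the objects of $\cD\twotimes{\cF}\cG$ are triples $(d,g,\varphi)$ with $d\in\cD$, $g\in\cG$, and $\varphi\colon F(d)\xrightarrow{\cong}G(g)$ an isomorphism in $\cF$, with morphisms the pairs making the evident square commute.

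Next I would reduce to the case where one of the two functors, say $G\colon\cG\to\cF$, is an isofibration. Using the factorization axiom in the canonical model structure, I factor $G$ as $\cG\xrightarrow{\simeq}\widetilde{\cG}\epi\cF$ with the first map an equivalence and the second an isofibration; by invariance of both constructions under equivalences it suffices to treat $\cD\to\cF\leftarrow\widetilde{\cG}$. For an isofibration the canonical comparison $\cD\ttimes{\cF}\widetilde{\cG}\to\cD\twotimes{\cF}\widetilde{\cG}$ from the strict pullback into the $2$-pullback, sending $(d,g)$ to $(d,g,\id)$, is an equivalence of groupoids, so the strict pullback already models the $2$-pullback. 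Since $N$ is a right adjoint it preserves this strict pullback, giving $N(\cD\ttimes{\cF}\widetilde{\cG})\cong N\cD\ttimes{N\cF}N\widetilde{\cG}$. Finally, because fibrations are created by $N$ (the cited result of Hollander), $N\widetilde{\cG}\to N\cF$ is a Kan fibration between Kan complexes, so this strict pullback of simplicial sets is a homotopy pullback; chasing the equivalences back along $N\cG\xrightarrow{\simeq}N\widetilde{\cG}$ identifies it with $N\cD\htimes{N\cF}N\cG$, which is exactly what the canonical comparison map must hit.

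The main obstacle is the second step: verifying that pullback along an isofibration genuinely computes the $2$-pullback, i.e.\ that the inclusion of the strict pullback into the iso-comma groupoid is an equivalence. Fully faithfulness is immediate from the definitions, so the content is essential surjectivity: given $(d,g,\varphi)$ with $\varphi\colon F(d)\xrightarrow{\cong}G(g)$, I lift the isomorphism $\varphi^{-1}\colon G(g)\to F(d)$ through the isofibration $G$ to an isomorphism $g\xrightarrow{\cong}g'$ in $\cG$ with $G(g')=F(d)$; then $(d,g',\id)$ lies in the strict pullback and is isomorphic to $(d,g,\varphi)$ in the $2$-pullback. This lifting is precisely the right lifting property of $G$ against $[0]\hookrightarrow I$ that defines isofibrations (\cref{ModelStructureGpd}). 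Everything else is formal: the stability of homotopy pullbacks of Kan complexes under pullback along fibrations, and the preservation of strict limits by the right adjoint $N$.
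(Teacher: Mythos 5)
Your proof is correct, but note that the paper does not actually prove this proposition: it is imported verbatim as a special case of \cite[Proposition 1.3.8]{DK}, so your proposal supplies an argument the paper outsources. Your route is the standard model-categorical one, and every ingredient is already on the table in the paper: the iso-comma description of $\cD\twotimes{\cF}\cG$, the factorization axiom of the canonical model structure, the lifting property against $[0]\hookrightarrow I$ from \cref{ModelStructureGpd} (your essential-surjectivity step checks out --- lifting $\varphi^{-1}$ through $G$ yields $g'$ with $G(g')=F(d)$, and $(d,g',\id)\cong(d,g,\varphi)$ via $(\id_d,\beta)$ with $G(\beta)=\varphi$, while full faithfulness is the observation that a morphism $(\alpha,\beta)$ between strict-pullback objects satisfies $F(\alpha)=G(\beta)$), preservation of strict pullbacks by the right adjoint $N$, Hollander's creation of fibrations and weak equivalences by $N$, and right properness of the Quillen model structure. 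What the citation buys the paper is brevity and a single reference covering the general comparison; what your argument buys is self-containedness at essentially no extra cost, since it leans only on \cite[Lemma 2.4]{Hollander}, which the paper quotes anyway. One point deserves an explicit sentence in a final write-up: your chain of equivalences exhibits $N(\cD\twotimes{\cF}\cG)$ as \emph{a} model of the homotopy pullback of nerves, whereas the statement asserts that the \emph{canonical} comparison map is a weak equivalence; since your zigzag commutes with the projections to the cospan $N\cD\rightarrow N\cF\leftarrow N\cG$, the two-out-of-three property applied to the comparison between any two models over the cospan closes this gap, but it is a step worth recording rather than leaving implicit.
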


With the previous proposition in hand, we can show that the homotopy pullbacks that we need to establish the double Segal, stability, and augmentation conditions can be modeled by strict pullbacks.  We first look at the homotopy pullback used in the augmentation condition. 
A similar argument appears in the proof of \cite[Proposition 3.16]{PenneyHall}. 

\begin{lem} \label{lemmaisofibration0}
The canonical map 
\[ (N^e\exactcat)_{-1} \longrightarrow (N^e\exactcat)_{0,0} \]
is an isofibration of groupoids. Therefore a strict pullback along this map is a model for the $2$-pullback; in particular, there is an equivalence of categories
$$(N^e\exactcat)_{1,0} \twotimes{(N^e\exactcat)_{0,0}} (N^e\exactcat)_{-1}\simeq (N^e\exactcat)_{1,0} \ttimes{(N^e\exactcat)_{0,0}} (N^e\exactcat)_{-1}.$$ 
\end{lem}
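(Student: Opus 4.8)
The plan is to identify the map explicitly, recognize it as the inclusion of a union of connected components so that \cref{rem sset grpd} applies directly, and then use the fact that strict pullbacks along fibrations compute homotopy pullbacks in the canonical model structure to deduce the claimed equivalence.

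First I would unwind the two relevant levels of the definition. At level $(0,0)$ the indexing category $[0]\times[0]$ is terminal, so there are no grid conditions to impose and $(N^e\exactcat)_{0,0}$ is simply the maximal subgroupoid $\exactcat_{\mathrm{iso}}$. The source $(N^e\exactcat)_{-1}$ is the full subgroupoid of $\exactcat_{\mathrm{iso}}$ spanned by the zero objects, and by definition the map in question is the full inclusion of the former into the latter.

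The key step is the observation that the zero objects form a union of connected components of $\exactcat_{\mathrm{iso}}$. Indeed, since $\exactcat$ is additive, any two zero objects are (uniquely) isomorphic, and any object isomorphic to a zero object is again a zero object; hence the full subgroupoid spanned by the zero objects is closed under isomorphism and therefore a union of connected components (a single one, since $\exactcat$ has a zero object). By \cref{rem sset grpd}, such an inclusion of connected components is an isofibration, which establishes the first assertion. For the second assertion I would invoke \cref{ModelStructureGpd}: isofibrations are precisely the fibrations of the canonical model structure on $\gpd$, and since every groupoid is fibrant, a strict pullback along a fibration is a model for the homotopy pullback. Applying this to
\[ (N^e\exactcat)_{1,0} \longrightarrow (N^e\exactcat)_{0,0} \longleftarrow (N^e\exactcat)_{-1}, \]
whose right-hand leg is the isofibration just produced, yields the asserted equivalence between the $2$-pullback and the strict pullback.

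The argument is short, and the only genuine content lies in the connected-component observation, which reduces the statement to the already-recorded \cref{rem sset grpd}. I do not anticipate a serious obstacle; the one point to handle carefully is the justification that a strict pullback along an isofibration is automatically a $2$-pullback, which rests on all objects being fibrant in the canonical model structure rather than on any special feature of the exact nerve.
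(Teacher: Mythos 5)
Your proposal is correct and follows essentially the same route as the paper: the paper's proof likewise observes that $(N^e\exactcat)_{-1} \to (N^e\exactcat)_{0,0}$ is the inclusion of a connected component (the zero objects being closed under isomorphism), applies \cref{rem sset grpd} to get the isofibration, and concludes that the strict pullback is a homotopy pullback. The only cosmetic difference is that you justify the last step by the general model-categorical fact about pullbacks along fibrations between fibrant objects, whereas the paper cites \cref{2hpullback}; both are valid.
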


\begin{proof} 
The map $(N^e\exactcat)_{-1} \to (N^e\exactcat)_{0,0}$ is the inclusion of a connected component of the groupoid $(N^e\exactcat)_{0,0}$. By \cref{rem sset grpd} it is an isofibration, so by \cref{2hpullback} the pullback in the statement of the lemma is in fact a homotopy pullback. 
\end{proof}

We now look at a similar result which can be applied to the homotopy pullbacks defining the double Segal and stability conditions.  The proof relies on a technical argument similar to the proof of \cref{jtprop4.10}, but in the context of functors $\Sigma^{\op} \rightarrow \sset$ rather than $\Delta^{\op} \rightarrow \sset$.  We give more details of a very similar proof in a more general context in \cref{lemmaKanfibration}. 

\begin{lem} \label{lemmaisofibration}
For any injective map $\theta\colon [\inda] \to [\inda']$ in $\Delta$ the induced map
\[ (N^e\exactcat)_{\inda', \indb} \longrightarrow (N^e\exactcat)_{\inda, \indb} \]
is an isofibration of groupoids. In particular, for any $0\leq i \leq \inda$ there is an equivalence of categories
\[ (N^e\exactcat)_{i,\indb} \twotimes{(N^e\exactcat)_{0,\indb}} (N^e\exactcat)_{\inda-i,\indb}\simeq (N^e\exactcat)_{i,\indb} \ttimes{(N^e\exactcat)_{0,\indb}} (N^e\exactcat)_{\inda-i,\indb}. \]
\end{lem}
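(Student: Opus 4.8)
The plan is to first establish the isofibration claim for the ambient functor groupoids and then transfer it to the full subgroupoids cut out by the exactness conditions. Observe that the injective map $\theta\colon[\inda]\to[\inda']$ induces a functor $\theta\times\id\colon[\inda]\times[\indb]\to[\inda']\times[\indb]$ that is injective on objects, and that the map in the statement is the restriction of the precomposition functor $(\theta\times\id)^*\colon \Fun([\inda']\times[\indb],\exactcat)_{\text{iso}}\to \Fun([\inda]\times[\indb],\exactcat)_{\text{iso}}$ to the subgroupoids $(N^e\exactcat)_{\inda',\indb}$ and $(N^e\exactcat)_{\inda,\indb}$. So the heart of the argument is a transport-of-structure lifting in these functor groupoids.

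Concretely, to verify the right lifting property against $[0]\hookrightarrow I$ from \cref{ModelStructureGpd}, I would start with an object $G\in(N^e\exactcat)_{\inda',\indb}$ and an isomorphism $\phi\colon(\theta\times\id)^*G\xrightarrow{\cong} H$ in $(N^e\exactcat)_{\inda,\indb}$, and produce a lift $\psi\colon G\xrightarrow{\cong}\tilde G$ with $(\theta\times\id)^*\psi=\phi$. The recipe is the standard one: on objects in the image of $\theta\times\id$ set $\tilde G$ equal to $H$ and let the component of $\psi$ be the corresponding component of $\phi$; on the remaining objects let $\tilde G$ agree with $G$ and let $\psi$ be the identity; then define $\tilde G$ on morphisms by conjugating $G$ by $\psi$, i.e. $\tilde G(f)=\psi_{b'}\,G(f)\,\psi_b^{-1}$. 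Injectivity of $\theta$ on objects guarantees that $\tilde G$ is well defined, functoriality and naturality of $\psi$ are immediate from this formula, and naturality of $\phi$ gives $(\theta\times\id)^*\tilde G=H$ together with $(\theta\times\id)^*\psi=\phi$.

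The main point to check -- and the step where the exact-category structure really enters -- is that the lift $\tilde G$ actually lies in the subgroupoid $(N^e\exactcat)_{\inda',\indb}$, rather than merely in the ambient functor groupoid. Here I would use that $\tilde G$ is isomorphic to $G$ via $\psi$, together with the fact that all three defining conditions are invariant under isomorphism: the families $\cM$ and $\cE$ are closed under isomorphisms, so the horizontal edges of $\tilde G$ remain admissible monomorphisms and the vertical edges remain admissible epimorphisms, and a square isomorphic to a bicartesian square is again bicartesian. Since $G$ satisfies all of these, so does $\tilde G$, and since the subgroupoid is full the isomorphism $\psi$ lies in it. This produces the required lift and proves the first assertion.

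For the ``in particular'' statement, I would apply the isofibration claim to the injective maps $[0]\to[\inda-i]$, $0\mapsto0$, and $[0]\to[i]$, $0\mapsto i$, which induce the two legs of the cospan $(N^e\exactcat)_{i,\indb}\to(N^e\exactcat)_{0,\indb}\leftarrow(N^e\exactcat)_{\inda-i,\indb}$. At least one leg is then an isofibration, and since every groupoid is fibrant in the canonical model structure, the strict pullback along it is a model for the $2$-pullback, exactly as in the proof of \cref{lemmaisofibration0} via \cref{2hpullback}; this yields the claimed equivalence between the $2$-pullback and the strict pullback. I expect the transport-of-structure verification in the previous paragraph -- and specifically the confirmation that $\tilde G$ stays inside the full subgroupoid -- to be the only genuinely delicate part of the argument; everything else is formal.
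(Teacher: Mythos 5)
Your proof is correct, but it takes a genuinely different route from the paper's. The paper does not verify the lifting property by hand: it invokes a Reedy-fibrancy argument in the style of \cref{jtprop4.10}, carried out for functors $\Sigma^{\op}\to\sset$ (with details given in the quasi-categorical analogue \cref{lemmaKanfibration}), where the map induced by $\theta$ is a Kan fibration because it is the map on mapping spaces induced by a cofibration of bisimplicial sets into a Reedy fibrant object, and the groupoid statement then follows since the nerve creates fibrations and weak equivalences between groupoids. Your argument instead checks the right lifting property against $[0]\hookrightarrow I$ directly by transport of structure: the conjugation formula $\tilde G(f)=\psi_{b'}G(f)\psi_b^{-1}$ is the standard one, injectivity of $\theta$ (which, being monotone between linear orders, is what makes $\tilde G$ well defined on objects) is used exactly where it must be, and you correctly isolate the one nontrivial point, namely that $\tilde G$ stays in the full subgroupoid because membership in $\cM$ and $\cE$ and the bicartesian condition are all invariant under isomorphism of diagrams. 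Your handling of the ``in particular'' clause via the injective maps $[0]\to[i]$, $0\mapsto i$, and $[0]\to[\inda-i]$, $0\mapsto 0$, matches the paper's use of \cref{2hpullback} (in fact both legs are isofibrations, though one suffices). The trade-off is worth noting: your argument is more elementary and self-contained, but it exploits the strictness of ordinary categories --- one can literally conjugate a diagram by a natural isomorphism --- and therefore does not transfer to the stable and proto-exact quasi-category settings of \cref{lemmaKanfibration,thm nerve 3}, whereas the paper's mapping-space argument is uniform across all three nerve constructions, which is presumably why the authors chose it even in the $1$-categorical case.
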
  

\begin{rmk}  \label{model2pullback}
These two lemmas facilitate the verification that the exact nerve of an exact category $\exactcat$ is an augmented stable double Segal groupoid.  For example, when $\inda=2$, $i=1$, and $\indb$ is arbitrary, via the previous lemma we can conclude that the $2$-pullback
\[ (N^e\exactcat)_{1,\indb} \twotimes{(N^e\exactcat)_{0,\indb}} (N^e\exactcat)_{1,\indb} \]
which appears in the double Segal condition can be modelled by an actual pullback. 
\end{rmk}

With these results, we now proceed to prove the main result of this section, namely, that $N^e\exactcat$ is an augmented stable double Segal groupoid.

\begin{prop} \label{exactnervedoubleSegal}
The preaugmented bisimplicial groupoid $N^e\exactcat$ is double Segal.
\end{prop}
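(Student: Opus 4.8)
The plan is to verify the two families of Segal maps from \cref{doublesegal} directly, reducing each homotopy pullback in the target to a strict pullback. By \cref{lemmaisofibration} (together with the analogous statement in the second simplicial variable, which follows by passing to the opposite exact category $\exactcat^{\op}$, whose grids are the transposes of those of $\exactcat$ with the roles of $\cM$ and $\cE$ exchanged, and bicartesian squares sent to bicartesian squares), the face maps appearing in the targets of the Segal maps are isofibrations. Hence by \cref{2hpullback} the relevant $2$-pullbacks are modeled by the corresponding strict pullbacks. It therefore suffices to show that, for all $\inda,\indb\ge1$, the \emph{strict} Segal map in the second variable
\[ (N^e\exactcat)_{\inda,\indb} \longrightarrow (N^e\exactcat)_{\inda,1} \ttimes{(N^e\exactcat)_{\inda,0}} \cdots \ttimes{(N^e\exactcat)_{\inda,0}} (N^e\exactcat)_{\inda,1} \]
and the analogous strict Segal map in the first variable are equivalences of groupoids.

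The key observation is that a $(\inda\times\indb)$-grid is a functor $[\inda]\times[\indb]\to\exactcat$, which is the same datum as a functor $[\indb]\to\Fun([\inda],\exactcat)$; since $[\indb]$ is a linear order, such a functor is precisely a sequence of $\indb$ composable morphisms in $\Fun([\inda],\exactcat)$. Consequently, at the level of the ambient functor groupoids the Segal map
\[ \Fun([\inda]\times[\indb],\exactcat)_{\text{iso}} \longrightarrow \Fun([\inda]\times[1],\exactcat)_{\text{iso}} \ttimes{} \cdots \ttimes{} \Fun([\inda]\times[1],\exactcat)_{\text{iso}} \]
is an \emph{isomorphism} of groupoids, being the classical strict Segal condition for the nerve of $\Fun([\inda],\exactcat)$.

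It then remains to check that this isomorphism restricts to the full subgroupoids cut out by the grid conditions; that is, that a compatible tuple of unit-width strips glues to a valid grid and conversely. Given strips satisfying the conditions, the glued functor has all its vertical maps in $\cE$ and all its length-one horizontal maps in $\cM$, and its unit squares are bicartesian. Its non-adjacent horizontal maps are composites of admissible monomorphisms, hence again in $\cM$ (admissible monomorphisms are closed under composition, dually to axiom~(3)), and its larger rectangles are bicartesian by the pasting law for bicartesian squares. Thus the glued functor is a valid $(\inda\times\indb)$-grid, and applying the same reasoning to natural isomorphisms shows that the restricted Segal map is bijective on objects and on morphisms. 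The first-variable Segal map is handled symmetrically, exchanging monomorphisms with epimorphisms and using that admissible epimorphisms are closed under composition (axiom~(3)).

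The main obstacle is precisely this gluing verification: one must confirm that the defining conditions of the subgroupoids are compatible with the spine decomposition, in particular that wider squares are automatically bicartesian via the pasting law and that composites of admissible mono- or epimorphisms remain admissible. These are standard features of exact categories, but they are exactly what guarantees that the strict Segal maps restrict to isomorphisms rather than mere monomorphisms of groupoids; combined with the isofibration lemmas and \cref{2hpullback}, they yield that the homotopy Segal maps of \cref{doublesegal} are weak equivalences, so that $N^e\exactcat$ is double Segal.
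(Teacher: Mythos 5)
Your proof is correct, and at the top level it follows the same strategy as the paper: use \cref{lemmaisofibration} together with \cref{2hpullback} to replace the homotopy pullbacks of \cref{doublesegal} by strict pullbacks, then show the strict Segal maps are equivalences. The differences are in how that second step is carried out, and they are worth noting. The paper treats the representative case $\inda=1$, $\indb=2$ and constructs an explicit inverse by gluing bicartesian squares along a common edge, leaving the general case to an assertion of similarity; you instead handle all $\inda,\indb\geq 1$ uniformly by observing that under the exponential identification $\Fun([\inda]\times[\indb],\exactcat)\cong\Fun([\indb],\Fun([\inda],\exactcat))$ the ambient strict Segal map is the strict $1$-Segal \emph{isomorphism} for the nerve of $\Fun([\inda],\exactcat)$ (maximal subgroupoids commute with these pullbacks since $(-)_{\mathrm{iso}}$ is a right adjoint), and then checking that this isomorphism restricts to the full subgroupoids of grids. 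That restriction step is exactly where the mathematical content lives in both proofs --- closure of $\cM$ and $\cE$ under composition and the pasting law for bicartesian squares --- so the two arguments have the same substance, but yours makes explicit that the strict Segal maps are isomorphisms, not merely equivalences. A second genuine improvement: the paper's \cref{lemmaisofibration} is stated only for injective maps in the \emph{first} simplicial variable, yet the paper invokes it for the Segal map in the second variable; your reduction to $\exactcat^{\op}$ (whose grids are the transposes of those of $\exactcat$ with $\cM$ and $\cE$ exchanged, bicartesian squares being self-dual) supplies the missing second-variable isofibrancy cleanly. One small point to keep in mind for the iterated pullbacks with $\indb>2$: you should note that isofibrations are stable under pullback, so the iterated strict pullback indeed models the iterated homotopy pullback; this is standard and is also glossed in the paper's \cref{model2pullback}.
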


\begin{proof}
We need to show that the maps 
\begin{align}
(N^e\exactcat)_{\inda,\indb} &\longrightarrow (N^e\exactcat)_{i,\indb} \twotimes{(N^e\exactcat)_{0,\indb}} (N^e\exactcat)_{\inda-i, \indb} \quad\text{and} \label{nerve Segality}\\
(N^e\exactcat)_{\inda, \indb} &\longrightarrow (N^e\exactcat)_{\inda, i} \twotimes{(N^e\exactcat)_{\inda, 0}} (N^e\exactcat)_{\inda, \indb-i} \label{nerve Segality2}
\end{align}
are equivalences. For simplicity, we focus on the latter case for $\inda =1, \indb=2$; the other cases can be proved similarly.

Observe that the map
\[ (N^e\exactcat)_{1,2} \longrightarrow (N^e\exactcat)_{1,1} \ttimes{(N^e\exactcat)_{1,0}} (N^e\exactcat)_{1,1} \] 
sends a $(1\times 2)$-grid of bicartesian squares to the pair of of squares given therein:
  \begin{center}
 \begin{tikzpicture}[inner sep=0.2pt]
\def\l{1cm}
  \begin{scope}
  \draw (0,0) node (a00){$d_{00}$};
\draw (\l,0) node (a01){$d_{01}$};
\draw (0,-\l) node (a10){$d_{10}$};
\draw (\l,-\l) node (a11){$d_{11}$};
\draw (2*\l,0) node (a02){$d_{02}$};
\draw (2*\l,-\l) node (a12){$d_{12}$};
\draw[mono] (a00)--(a01);
\draw[epi] (a01)--(a11);
\draw[mono] (a10)--(a11);
\draw[epi] (a00)--(a10);
\draw[mono] (a01)--(a02);
\draw[mono] (a11)--(a12);
\draw[epi] (a02)--(a12);
  \end{scope}
 \draw[mapstikz2] (3*\l, -0.5*\l)--(4*\l, -0.5*\l);
  \draw (4.6*\l, -0.5*\l) node(lbracket){
$\left(\vphantom{
\begin{minipage}[t][0.5cm][t]{3cm}
\end{minipage}
} \right.$ };

     \begin{scope}[xshift=5*\l]
   \draw (0,0) node (a00){$d_{00}$};
 \draw (\l,0) node (a01){$d_{01}$};
\draw (0,-\l) node (a10){$d_{10}$};
 \draw (\l,-\l) node (a11){$d_{11}$};
 \draw[mono] (a00)--(a01);
 \draw[epi] (a01)--(a11);
 \draw[mono] (a10)--(a11);
 \draw[epi] (a00)--(a10);
   \end{scope}
 \draw (6.5*\l, -0.5*\l) node (comma){$,$};
  
  \begin{scope}[xshift=6*\l]
\draw (\l,0) node (a01){$d_{01}$};
\draw (\l,-\l) node (a11){$d_{11}$};
\draw (2*\l,0) node (a02){$d_{02}$};
\draw (2*\l,-\l) node (a12){$d_{12}$};
\draw[epi] (a01)--(a11);
\draw[mono] (a01)--(a02);
\draw[mono] (a11)--(a12);
\draw[epi] (a02)--(a12);
  \end{scope}

  \draw (8.4*\l, -0.5*\l) node(rbracket){$\left.
  \vphantom{
\begin{minipage}[t][0.5cm][t]{3cm}
\end{minipage}
}
  \right)$};
  \draw (rbracket) node[xshift=0.2*\l] (comma2){$.$};
  \end{tikzpicture}
  \end{center}

We construct an inverse map; showing that it is indeed an inverse equivalence is straightforward.  Since the composite of bicartesian squares is still bicartesian, the assignment that glues together two bicartesian squares with a common edge, represented as:
   \begin{center}
 \begin{tikzpicture}[inner sep=0.2pt]
\def\l{1cm}
 \begin{scope}[xshift=-6cm]
  \draw (4.6*\l, -0.5*\l) node(lbracket){$\left(  \vphantom{
\begin{minipage}[t][0.5cm][t]{3cm}
\end{minipage}
}\right.$};
    \begin{scope}[xshift=5*\l]
  \draw (0,0) node (a00){$d_{00}$};
\draw (\l,0) node (a01){$d_{01}$};
\draw (0,-\l) node (a10){$d_{10}$};
\draw (\l,-\l) node (a11){$d_{11}$};
\draw[mono] (a00)--(a01);
\draw[epi] (a01)--(a11);
\draw[mono] (a10)--(a11);
\draw[epi] (a00)--(a10);
  \end{scope}

\draw (6.5*\l, -0.5*\l) node (comma){$,$};
  
  \begin{scope}[xshift=6*\l]
\draw (\l,0) node (a01){$d_{01}$};
\draw (\l,-\l) node (a11){$d_{11}$};
\draw (2*\l,0) node (a02){$d_{02}$};
\draw (2*\l,-\l) node (a12){$d_{12}$};
\draw[epi] (a01)--(a11);
\draw[mono] (a01)--(a02);
\draw[mono] (a11)--(a12);
\draw[epi] (a02)--(a12);
  \end{scope}

 \draw (8.4*\l, -0.5*\l) node(rbracket){$\left.  \vphantom{
\begin{minipage}[t][0.5cm][t]{3cm}
\end{minipage}
}\right)$};
  \end{scope}

    \draw[mapstikz2] (3*\l, -0.5*\l)--(4*\l, -0.5*\l);
  
  \begin{scope}[xshift=5*\l]
  \draw (0,0) node (a00){$d_{00}$};
\draw (\l,0) node (a01){$d_{01}$};
\draw (0,-\l) node (a10){$d_{10}$};
\draw (\l,-\l) node (a11){$d_{11}$};
\draw (2*\l,0) node (a02){$d_{02}$};
\draw (2*\l,-\l) node (a12){$d_{12}.$};
\draw[mono] (a00)--(a01);
\draw[epi] (a01)--(a11);
\draw[mono] (a10)--(a11);
\draw[epi] (a00)--(a10);
\draw[mono] (a01)--(a02);
\draw[mono] (a11)--(a12);
\draw[epi] (a02)--(a12);
  \end{scope}
   \end{tikzpicture}
 \end{center}
gives a well-defined map
\[ (N^e\exactcat)_{1,1} \ttimes{(N^e\exactcat)_{1,0}} (N^e\exactcat)_{1,1} \longrightarrow (N^e\exactcat)_{1,2}.\]

By \cref{lemmaisofibration}, we conclude that the map \eqref{nerve Segality2} for $\inda =1, \indb=2$ is also a weak equivalence.  Using a similar argument for the other double Segal maps \eqref{nerve Segality}, we obtain that $N^e\exactcat$ is double Segal.
 \end{proof}

\begin{prop} \label{exactnervestable}
The preaugmented bisimplicial groupoid $N^e\exactcat$ is stable. 
\end{prop}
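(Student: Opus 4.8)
The plan is to verify directly the two weak equivalences required in \cref{babystability}, following the strategy used for \cref{exactnervedoubleSegal}: first reduce the relevant $2$-pullbacks to strict pullbacks, and then exhibit explicit inverses to the two comparison maps. Unravelling the definitions, $(N^e\exactcat)_{1,1}$ is the groupoid of bicartesian squares with corners $F(0,0),F(0,1),F(1,0),F(1,1)$, horizontal edges in $\cM$, and vertical edges in $\cE$. The first (``span'') map sends such a square to the pair consisting of its top edge $F(0,0)\mono F(0,1)$ and its left edge $F(0,0)\epi F(1,0)$, which share the source $F(0,0)$; the second (``cospan'') map sends it to its right edge $F(0,1)\epi F(1,1)$ and its bottom edge $F(1,0)\mono F(1,1)$, which share the target $F(1,1)$.

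Both pullbacks involve a base map $(N^e\exactcat)_{1,0}\to(N^e\exactcat)_{0,0}$ induced by an injection $[0]\to[1]$ in the first variable, so by \cref{lemmaisofibration} this map is an isofibration; as in \cref{lemmaisofibration0}, each $2$-pullback may then be computed as a strict pullback. The target of the span map is then the groupoid of \emph{spans} $F(0,1)\xleftarrow{\cM}F(0,0)\xrightarrow{\cE}F(1,0)$, and the target of the cospan map is the groupoid of \emph{cospans} $F(0,1)\xrightarrow{\cE}F(1,1)\xleftarrow{\cM}F(1,0)$.

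To invert the span map, I would complete a span to a bicartesian square. Given $B\xleftarrow{i}A\xrightarrow{p}C$ with $i\in\cM$ and $p\in\cE$, drawn as a square with $A$ in the top-left corner, axiom (5) yields the pushout $D\coloneqq B\amalg_A C$, whose bottom edge $C\mono D$ is an admissible monomorphism; the essential point is that, since the left edge $A\epi C$ is an admissible epimorphism, the right edge $B\to D$ is again an admissible epimorphism and the square is bicartesian. This assignment is functorial with respect to isomorphisms, and by the universal property of the pushout it is inverse to the span map: essential surjectivity is immediate, and full faithfulness holds because the corner $F(1,1)$ of any bicartesian square is the pushout of its source span, so that any isomorphism of spans extends uniquely to one of squares. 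The cospan map is handled by the dual argument: given $B\xrightarrow{p}D\xleftarrow{i}C$ with $p\in\cE$ and $i\in\cM$, axiom (4) produces the pullback $A\coloneqq B\times_D C$ with $A\epi C$ admissible, and the dual fact gives that $A\mono B$ is admissible and the square bicartesian.

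The one genuinely non-formal ingredient, and hence the main obstacle, is the exact-category statement that the pushout of a span with one leg in $\cM$ and one in $\cE$ is bicartesian, with its two new edges again in $\cM$ and $\cE$ (and dually for the pullback of a cospan). This is a standard consequence of the axioms: writing $K=\ker(p)$, the composite $K\mono A\mono B$ is an admissible monomorphism whose cokernel realizes $D$, simultaneously exhibiting the square as a pushout and a pullback. Granting this, both comparison maps of \cref{babystability} are equivalences of groupoids, and therefore $N^e\exactcat$ is stable.
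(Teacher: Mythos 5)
Your proposal is correct and follows essentially the same route as the paper's proof: reduce the $2$-pullbacks to strict pullbacks via \cref{lemmaisofibration}, then invert the comparison maps by functorially completing cospans to bicartesian squares via pullbacks (axiom (4)) and, dually, spans via pushouts (axiom (5)). The only difference is one of detail: you spell out the exact-category lemma that such pushout/pullback squares are bicartesian with the new edges again in $\cM$ and $\cE$ (via the kernel argument), which the paper compresses into the remark that the relevant cartesian squares are bicartesian.
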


\begin{proof}
First, observe that the map
\[ (N^e\exactcat)_{1,1} \longrightarrow (N^e\exactcat)_{1,0} \ttimes{(N^e\exactcat)_{0,0}} (N^e\exactcat)_{0,1} \] 
that sends a bicartesian square to the cospan contained in it, depicted as
\begin{center}
 \begin{tikzpicture}[inner sep=0.2pt]
  \def\l{1.0cm}
    \begin{scope}[xshift=-0.2*\l]
  \draw (0,0) node (c00){$c_{00}$};
\draw (\l,0) node (c01){$c_{01}$};
\draw (0,-\l) node (c10){$c_{10}$};
\draw (\l,-\l) node (c11){$c_{11}$};
\draw[mono] (c00)--(c01);
\draw[epi] (c01)--(c11);
\draw[mono] (c10)--(c11);
\draw[epi] (c00)--(c10);
  \end{scope}
 
\draw[mapstikz2] (1.25*\l, -0.5*\l)--(1.75*\l,-0.5*\l);
 
\begin{scope}[xshift=2.2*\l]
 
\draw (\l,0) node (c01){$c_{01}$};
\draw (0,-\l) node (c10){$c_{10}$};
\draw (\l,-\l) node (c11){$c_{11}$};
\draw[epi] (c01)--(c11);
\draw[mono] (c10)--(c11);
  \end{scope}  
 \end{tikzpicture}
\end{center}
is an equivalence of groupoids.

Indeed, since in $\exactcat$ every cartesian square is also bicartesian, an inverse equivalence
\[ (N^e\exactcat)_{1,0} \ttimes{(N^e\exactcat)_{0,0}} (N^e\exactcat)_{0,1} \longrightarrow (N^e\exactcat)_{1,1} \]
can be defined by functorially choosing pullbacks for each cospan
\begin{center}
 \begin{tikzpicture}[inner sep=0.2pt]
  \def\l{1.3cm}

  \begin{scope}[xshift=0.4*\l]
\draw (1.5*\l,0) node (c01){$c_{01}$};
\draw (0.5*\l,-\l) node (c10){$c_{10}$};
\draw (1.5*\l,-\l) node (c11){$c_{11}$};
\draw[epi] (c01)--(c11);
\draw[mono] (c10)--(c11);
  \end{scope}  
  
 \draw[mapstikz2] (2.25*\l, -0.5*\l)--(2.75*\l,-0.5*\l);
 
    \begin{scope}[xshift=3.5*\l, yshift=0]
  \draw (0,0) node (c00){$c_{01}\ttimes{c_{11}} c_{10}$};
\draw (1.5*\l,0) node (c01){$c_{01}$};
\draw (0,-\l) node (c10){$c_{10}$};
\draw (1.5*\l,-\l) node (c11){$c_{11}.$};
\draw[mono] (c00)--(c01);
\draw[epi] (c01)--(c11);
\draw[mono] (c10)--(c11);
\draw[epi] (c00)--(c10);
  \end{scope}
 \end{tikzpicture}
\end{center}
By \cref{lemmaisofibration}, the map
\[ (N^e\exactcat)_{1,1} \longrightarrow (N^e\exactcat)_{1,0} \twotimes{(N^e\exactcat)_{0,0}} (N^e\exactcat)_{0,1} \]
is thus also a weak equivalence.  It can be proven similarly that the analogous map
\[ (N^e\exactcat)_{1,1} \longrightarrow (N^e\exactcat)_{0,1} \twotimes{(N^e\exactcat)_{0,0}} (N^e\exactcat)_{1,0} \]
is also an equivalence of groupoids.  Applying \cref{babystability}, we conclude that $N^e\exactcat$ is stable.
\end{proof}

\begin{prop}
The double Segal groupoid $N^e\exactcat$ is aug\-men\-ted.
\end{prop}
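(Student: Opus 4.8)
The plan is to verify directly the two conditions of \cref{babyaugmentation}, reducing each homotopy pullback to a strict pullback and then identifying the resulting composite as an equivalence of groupoids.

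First I unwind the groupoids involved. By construction $(N^e\exactcat)_{-1}$ is the groupoid of zero objects, $(N^e\exactcat)_{0,0}=\exactcat_{\mathrm{iso}}$, while $(N^e\exactcat)_{1,0}$ is the groupoid of admissible epimorphisms and $(N^e\exactcat)_{0,1}$ the groupoid of admissible monomorphisms; under these identifications $\sourcever,\targetver$ send an admissible epimorphism to its source and target, and $\sourcehor,\targethor$ send an admissible monomorphism to its source and target. Since the augmentation $(N^e\exactcat)_{-1}\to(N^e\exactcat)_{0,0}$ is the inclusion of the connected component of the zero objects, it is an isofibration by \cref{rem sset grpd}, so exactly as in \cref{lemmaisofibration0} the homotopy pullbacks appearing in \eqref{eqn: augmented 1A} and \eqref{eqn: augmented 2A} are computed by strict pullbacks (via \cref{2hpullback}).

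Next I describe these strict pullbacks and the composites. The pullback in \eqref{eqn: augmented 1A}, formed over $\targetver$ and the augmentation, is the groupoid of admissible epimorphisms $A\epi Z$ with $Z$ a zero object, and the composite sends such an object to its source $A$; dually, the pullback in \eqref{eqn: augmented 2A}, formed over $\sourcehor$, is the groupoid of admissible monomorphisms $Z\mono B$ with $Z$ a zero object, and the composite sends such an object to its target $B$. It then remains to show that these source/target functors to $\exactcat_{\mathrm{iso}}$ are equivalences. Fullness and faithfulness are automatic: a morphism in the pullback is a commuting square of isomorphisms whose component between the two zero objects is forced to be the unique isomorphism and whose commutativity is automatic, so such morphisms are in bijection with isomorphisms between the sources (resp.\ targets).

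The only step with genuine content is essential surjectivity, which amounts to the statement that for every object $A$ the canonical map $A\to 0$ is an admissible epimorphism (resp.\ $0\to A$ is an admissible monomorphism). I expect this to be the main obstacle, but it is a standard consequence of the exact category axioms: pushing out the admissible monomorphism $\id_0$ along $0\to A$ shows that $\id_A$ is an admissible monomorphism, whose cokernel is exactly $A\to 0$, so the latter is admissible; dually, pulling back the admissible epimorphism $\id_0$ along $A\to 0$ shows that $\id_A$ is an admissible epimorphism, whose kernel is exactly $0\to A$. With essential surjectivity in hand, both composites are equivalences of groupoids, and \cref{babyaugmentation} yields that $N^e\exactcat$ is augmented.
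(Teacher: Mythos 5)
Your proof is correct and takes essentially the same route as the paper: both reduce the homotopy pullbacks of \cref{babyaugmentation} to strict pullbacks via \cref{lemmaisofibration0}, and both then show the resulting source/target functors to $\exactcat_{\mathrm{iso}}$ are equivalences of groupoids (the paper by exhibiting the explicit inverse $c_{00}\mapsto (c_{00}\epi 0,\, 0)$ for a fixed zero object, you by verifying fully faithful plus essentially surjective). Your derivation from the exact category axioms that $A\epi 0$ is an admissible epimorphism and $0\mono A$ an admissible monomorphism simply makes explicit a standard fact the paper uses implicitly.
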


\begin{proof}
Consider the source map
\[ (N^e\exactcat)_{1,0} \ttimes{(N^e\exactcat)_{0,0}} (N^e\exactcat)_{-1} \longrightarrow (N^e\exactcat)_{1,0} \to (N^e\exactcat)_{0,0}.\]
We claim that this map is an equivalence of groupoids; an inverse equivalence is given by sending an object to the unique vertical arrow to a fixed zero object~$0$. To clarify, the original source map can be depicted by the assignment on the left, and the inverse equivalence by the one on the right:
\begin{center}
\begin{tikzpicture}[baseline=(c00.base)]
 \def\l{1.0cm}
 \begin{scope}
  \draw (-0.5*\l, -0.5*\l) node(lbracket){$\left(  \vphantom{
\begin{minipage}[t][0.5cm][t]{3cm}
\end{minipage}
}\right.$};
  
\draw (0,0) node (a00){$c_{00}$};

\draw (0,-\l) node (a10){$a_{10}$};
\draw[epi] (a00)--(a10);
\draw (0.5*\l, -0.5*\l) node (comma){$,$};
  
\draw (1.5*\l, -0.4*\l) node (aug){$a_{10}$};

\draw (1.8*\l, -0.5*\l) node(rbracket){$\left.  \vphantom{
\begin{minipage}[t][0.5cm][t]{3cm}
\end{minipage}
}\right)$};
  \end{scope}
  
 \draw[mapstikz2] (2.3*\l, -0.4*\l)--(3.3*\l, -0.4*\l);
     \begin{scope}[xshift=4*\l]
  \draw (0,-0.4*\l) node (c00){$c_{00}$,};
  \end{scope}

  \end{tikzpicture}
\hspace{0.5cm}
 \begin{tikzpicture}[baseline=(c00.base)]
  \def\l{1.0cm}
  \draw (0,0) node (c00){$c_{00}$};
  \draw[mapstikz2] (0.5*\l, 0)--(1.5*\l, 0);
\begin{scope}[xshift=2.5*\l, yshift=0.5*\l]
 \draw (-0.4*\l, -0.5*\l) node(lbracket){$\left(
   \vphantom{
\begin{minipage}[t][0.5cm][t]{3cm}
\end{minipage}
}
\right.$};
  
\draw (0,0) node (a00){$c_{00}$};

\draw (0,-\l) node (a10){$0$};
\draw[epi] (a00)--(a10);
\draw (0.5*\l, -0.5*\l) node (comma){$,$};
  
\draw (1.5*\l, -0.4*\l) node (aug){$0$};

\draw (1.8*\l, -0.5*\l) node(rbracket){$\left.
\vphantom{
\begin{minipage}[t][0.5cm][t]{3cm}
\end{minipage}
}\right)$};
  \end{scope}
  \draw (rbracket) node[xshift=0.2*\l] (comma2){$.$};
 \end{tikzpicture}
\end{center}

Once again using \cref{lemmaisofibration0}, we have that the map
\[ (N^e\exactcat)_{1,0} \twotimes{(N^e\exactcat)_{0,0}} (N^e\exactcat)_{-1} \to (N^e\exactcat)_{1,0} \longrightarrow (N^e\exactcat)_{0,0} \]
is also a weak equivalence.   By a similar argument the map in the other simplicial direction
\[ (N^e\exactcat)_{0,1} \twotimes{(N^e\exactcat)_{0,0}} (N^e\exactcat)_{-1} \to (N^e\exactcat)_{0,1} \longrightarrow (N^e\exactcat)_{0,0},\]
is also an equivalence of groupoids.  Using \cref{babyaugmentation} and \cref{exactnervestable}, we conclude that $N^e\exactcat$ is augmented.
\end{proof}

\subsection{Comparing $\sdot$-constructions for exact categories}
\label{Comparing Sdot-constructions for exact categories}

Now that we have established that $N^e\exactcat$ is an augmented stable double Segal groupoid, we can apply our generalized $\sdot$-construction from \cref{sdot}.  To justify that this construction is indeed a generalization, we want to show that the result of applying it to $N^e\exactcat$ agrees with the output of the classical $\sdot$-construction applied to the exact category $\exactcat$.

There are two possible constructions to which we can compare our new one.  Both make use of the category of arrows of $[n]$, denoted by $\operatorname{Ar}[n]:=\Fun([1],[n])$.  Roughly speaking, $\Ar[n]$ is an ordinary category that has the same shape as $\wW{n}$ from \cref{nerveofWn}.  Based on that depiction, we refer to morphisms in the ``horizontal" and ``vertical" directions, as well as to the objects along the diagonal, namely those indexed by $ii$ for some $i$.  We discuss the difference between $\Ar[n]$ and $\wW{n}$ in more depth in \cref{arnvswn} below.

Given an (exact) category $\exactcat$, we can look at the category $\Fun(\Ar[n], \exactcat)$ of $\Ar[n]$-diagrams in $\exactcat$ as well as its maximal subgroupoid $\Fun(\Ar[n], \exactcat)_{\textrm{iso}}$.  The first definition is the original one due to Waldhausen \cite[\S 1.9]{waldhausen}. 

\begin{defn}
The \emph{Waldhausen construction} of an exact category $\exactcat$ is the simplicial groupoid $\sdotw(\exactcat)$ defined as follows.  For each $n \geq 0$, $\sdotwn{n}(\exactcat)$ is the the full subgroupoid of $\Fun(\operatorname{Ar}[n],\exactcat)_{\textrm{iso}}$ consisting of diagrams of shape $\operatorname{Ar}[n]$ in $\exactcat$ in which all horizontal morphisms are in $\cM$, all vertical morphisms are in $\cE$, all squares are bicartesian, and any element of the diagonal of $\operatorname{Ar}[n]$ is mapped to a specified zero object 0 of $\exactcat$.
\end{defn}

The second $\sdot$-construction, which we denote by $\sdote(\exactcat)$, has been described by a number of authors, including Dyckerhoff and Kapranov in \cite[\S 2.4]{DK}, G\'alvez-Carrillo, Kock, and Tonks in \cite[\S 10.7]{GalvezKockTonks}, and Lurie in \cite[Remark 1.2.2.5]{LurieHA}.
Essentially, the definition is the same as Waldhausen's but we allow for the elements along the diagonal to be any zero object of $\exactcat$, not a specified one.

While it is not surprising that one can allow for the use of multiple zero objects, we give a formal proof of the comparison between the two constructions.

\begin{prop} \label{comparisonsdotexactclassical}
Let $\exactcat$ be an exact category. There is a levelwise equivalence of groupoids
\[ \sdotw(\exactcat)\stackrel{\simeq}{\longrightarrow}\sdote(\exactcat). \]
\end{prop}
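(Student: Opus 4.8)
The plan is to take the comparison map to be the evident inclusion: an object of $\sdotwn{n}(\exactcat)$ is in particular an $\operatorname{Ar}[n]$-diagram all of whose diagonal entries are zero objects (namely the chosen one $0$), hence an object of $\sdoten{n}(\exactcat)$. Since any morphism $[m]\to[n]$ in $\Delta$ induces a functor $\operatorname{Ar}[m]\to\operatorname{Ar}[n]$ that carries the diagonal into the diagonal, precomposition preserves the property of sending diagonal objects to the chosen zero object $0$; thus $\sdotw(\exactcat)$ is a simplicial subgroupoid of $\sdote(\exactcat)$ and the inclusion is a map of simplicial groupoids. It then suffices to verify that this inclusion is an equivalence of groupoids in each simplicial degree $n$.

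Fullness and faithfulness are immediate: both $\sdotwn{n}(\exactcat)$ and $\sdoten{n}(\exactcat)$ are by definition \emph{full} subgroupoids of $\Fun(\operatorname{Ar}[n],\exactcat)_{\textrm{iso}}$, and the objects of the former form a subclass of those of the latter, so every morphism in $\sdoten{n}(\exactcat)$ between two objects lying in $\sdotwn{n}(\exactcat)$ already belongs to $\sdotwn{n}(\exactcat)$.

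The substance of the argument is essential surjectivity. Given an object $F$ of $\sdoten{n}(\exactcat)$, each diagonal entry $F(ii)$ is a zero object and hence admits a unique isomorphism $\phi_{ii}\colon F(ii)\xrightarrow{\cong} 0$ to the chosen zero object; set $\phi_x=\id$ on every non-diagonal object $x$. Transporting $F$ along this family — that is, defining a new diagram $G$ with $G(x)=F(x)$ for $x$ off the diagonal, $G(ii)=0$, and $G(x\to y)=\phi_y\circ F(x\to y)\circ\phi_x^{-1}$ — produces an object $G$ together with a natural isomorphism $F\xrightarrow{\cong}G$ whose diagonal entries are all the chosen zero object. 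I would then observe that $G$ still lies in $\sdotwn{n}(\exactcat)$: this is precisely where the first axiom of an exact category, closure of the exact structure under isomorphism, is used, as it guarantees that admissibility of the horizontal and vertical morphisms and the bicartesian condition on squares are all preserved by the isomorphism $F\cong G$. Hence every object of $\sdoten{n}(\exactcat)$ is isomorphic to one coming from $\sdotwn{n}(\exactcat)$.

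The only step needing care — the ``main obstacle,'' such as it is — is this last verification that transport of structure preserves membership in the diagram groupoids; once the isomorphism-invariance of $\cM$, $\cE$, and the bicartesian condition is recorded, everything is formal. Combining the three properties shows the inclusion is an equivalence in each degree, that is, a levelwise equivalence of simplicial groupoids. One could moreover note that, since the isomorphisms $\phi_{ii}$ are \emph{unique}, the transport is functorial in $F$ and compatible with the simplicial operators, so that it even furnishes a strict simplicial inverse; but this is not needed for the stated levelwise claim.
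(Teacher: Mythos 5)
Your proof is correct, but it takes a genuinely different route from the paper's. You argue directly that the inclusion $\sdotwn{n}(\exactcat)\hookrightarrow\sdoten{n}(\exactcat)$ is fully faithful (both being full subgroupoids of $\Fun(\operatorname{Ar}[n],\exactcat)_{\textrm{iso}}$) and essentially surjective, the latter by transporting a diagram along the unique isomorphisms $F(ii)\cong 0$ and invoking closure of the exact structure under isomorphism to see that admissibility of $\cM$- and $\cE$-morphisms and the bicartesian condition survive the transport. This is elementary and self-contained, and it makes explicit exactly which axiom of exact categories carries the argument. The paper instead packages the same comparison homotopy-theoretically: it identifies $\sdotwn{n}(\exactcat)$ with the strict pullback $\{0\}^{n+1}\ttimes{\aug(\exactcat)^{n+1}}\sdoten{n}(\exactcat)$, observes that evaluation at the diagonal entries is an isofibration so that this pullback is a $2$-pullback, and then lets invariance of $2$-pullbacks along the categorical equivalence $\{0\}\hookrightarrow\aug(\exactcat)$ do the work. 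The paper's argument is shorter given the isofibration/$2$-pullback machinery already set up in \cref{ModelStructureGpd,2hpullback,rem sset grpd} and produces the comparison map canonically; yours trades that machinery for an explicit construction of the inverse-up-to-isomorphism, which is essentially what the abstract argument secretly computes.

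One caveat about your closing aside: the transport functor is \emph{not} strictly compatible with all simplicial operators. It commutes with face maps, since an injective $\theta$ carries off-diagonal indices to off-diagonal indices, but not with degeneracies: for a non-injective $\theta\colon[m]\to[n]$ an off-diagonal index $(i,j)$ can land on a diagonal index $(\theta(i),\theta(i))$, so $\theta^*(RF)$ has the chosen $0$ in position $(i,j)$, while $R(\theta^*F)$ retains the zero object $F(\theta(i),\theta(i))$ there, the transport being the identity off the diagonal. Concretely, pulling back a zero object $z\neq 0$ in $\sdoten{0}(\exactcat)$ along $[1]\to[0]$ and then transporting yields $0\mono z\epi 0$ rather than the constant diagram at $0$, so the claimed strict simplicial inverse does not exist. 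Since you explicitly flagged that remark as unnecessary for the stated levelwise claim, the proof of the proposition itself is unaffected.
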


\begin{proof}
We build a canonical map $\sdotw\exactcat\to\sdote\exactcat$ that is a levelwise equivalence.  If $\aug(\exactcat)$ denotes the groupoid of zero objects of $\exactcat$ and $0$ is a fixed zero object, there is an isomorphism of groupoids
\begin{equation} \label{waldsn} \sdotwn{n}(\exactcat)\cong\{0\}^{n+1}\ttimes{\aug(\exactcat)^{n+1}}\sdoten{n}(\exactcat).
\end{equation}
For each $0\leq \inda \leq n$ there is a map $\sdoten{n}(\exactcat)\to\aug(\exactcat)$, given by evaluating a diagram of shape $\operatorname{Ar}[n]$ at the object $(\inda,\inda)$, which is an isofibration. It follows that the pullback \eqref{waldsn} is a 2-pullback, and the map
\[ \sdotwn{n}(\exactcat)\cong\{0\}^{n+1}\ttimes{\aug(\exactcat)^{n+1}}\sdoten{n}(\exactcat)\longrightarrow\aug(\exactcat)^{n+1}\ttimes{\aug(\exactcat)^{n+1}}\sdoten{n}(\exactcat)\cong \sdoten{n}(\exactcat) \]
induced by the categorical equivalence $\{0\}\hookrightarrow\aug(\exactcat)$ is an equivalence of groupoids, as desired.
\end{proof}

The following theorem shows that the generalized $\sdot$-construction recovers the previous constructions for exact categories.

\begin{thm} \label{comparisonsdotexact} 
Let $\exactcat$ be an exact category.  There is an isomorphism of groupoids
$$\sdote(\exactcat)\cong\sdot(N^e\exactcat).$$
\end{thm}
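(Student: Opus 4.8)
The plan is to identify, naturally in $[n]$, both simplicial groupoids with the same explicit groupoid of $\Ar[n]$-shaped diagrams in $\exactcat$. For the right-hand side I would not compute the right Kan extension $\sdot = p_*$ directly, but instead use the adjunction $(p^*, \sdot)$ from \cref{sdot} together with \cref{nerveofWn}: since $\wW{n} = p^*\Delta[n]$ and enriched Yoneda gives $\Map_{\gpd^{\Dop}}(\Delta[n], Z)\cong Z_n$, there is a natural isomorphism of groupoids
\[ \sdotn{n}(N^e\exactcat) \cong \Map_{\sagpd}(\wW{n}, N^e\exactcat), \]
with $\wW{n}$ viewed as a discrete preaugmented bisimplicial groupoid. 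It therefore suffices to compute this mapping groupoid and compare it with $\sdoten{n}(\exactcat)$, whose objects are the diagrams of shape $\Ar[n]$ in $\exactcat$ with horizontal morphisms in $\cM$, vertical morphisms in $\cE$, all squares bicartesian, and all diagonal objects zero objects.

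I would then construct an explicit isomorphism by unpacking a morphism $F\colon\wW{n}\to N^e\exactcat$ cell by cell, using the description of $\wW{n}$ in \cref{nerveofWn}. A cell $(i_0\le\dots\le i_\indc\le j_0\le\dots\le j_\indd)$ of $\wW{n}_{\indc,\indd}$ is sent by $F$ to an object of $(N^e\exactcat)_{\indc,\indd}$, that is, a grid $[\indc]\times[\indd]\to\exactcat$ of the prescribed type; by naturality in $\Sigma$ this grid has $(a,b)$-entry equal to the value of $F$ on the $(0,0)$-subcell $(i_a\le j_b)$ and its edges equal to the values on the $(1,0)$- and $(0,1)$-subcells. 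Conversely, given a diagram $D\colon\Ar[n]\to\exactcat$ of the type above, I define $F$ on every cell by $(a,b)\mapsto D(i_a\le j_b)$; the conditions on $D$ guarantee that each such grid lies in $N^e\exactcat$, and this assignment is manifestly natural. These two passages are mutually inverse: starting from $F$ and re-expanding recovers $F$ by the naturality identity above, while starting from $D$ recovers $D$ on objects and generating morphisms. The same bookkeeping applied to isomorphisms of natural transformations matches morphisms on the two sides, so the assignment is an isomorphism of groupoids.

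The role of the augmentation enters precisely at the diagonal: naturality of $F$ along the inclusion $\wW{n}_{-1}\to\wW{n}_{0,0}$ from \cref{nerveofWn}, together with the inclusion of zero objects $(N^e\exactcat)_{-1}\to(N^e\exactcat)_{0,0}$ built into the exact nerve, forces each diagonal object $F(i,i)$ to be a zero object, and conversely permits it to be an arbitrary one. This is exactly the diagonal condition distinguishing $\sdote$ (rather than $\sdotw$). Finally, I would observe that a simplicial operator $[m]\to[n]$ induces compatible maps $\wW{m}\to\wW{n}$ and restriction of $\Ar$-diagrams, so the levelwise isomorphisms assemble into an isomorphism of simplicial groupoids $\sdote(\exactcat)\cong\sdot(N^e\exactcat)$.

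The main obstacle is the combinatorial matching carried out in the second step: $\wW{n}$ is \emph{not} the double nerve of $\Ar[n]$, as its cells are only the ``staircase'' subgrids in which every source-increment precedes every target-increment. One must check that these staircases nonetheless carry exactly the information of a single functor on $\Ar[n]$, the key points being that the extracted horizontal and vertical morphisms assemble into a well-defined functor (well-definedness following from the commutativity of the squares supplied by the $(1,1)$-cells) and that no information is lost or overdetermined on the higher cells. This is the subtle comparison between $\Ar[n]$ and $\wW{n}$ taken up in \cref{arnvswn}.
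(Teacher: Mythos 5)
Your proposal is correct, but it takes a genuinely different route from the paper. The paper also starts from the identification $\sdotn{n}(Y)\cong\Map_{\sasset}(\wW{n},Y)$ of \cref{nerveofWn}, but then invokes the machinery of \cref{midanodyne}: the simplicial adjunction $L\dashv T$, the acyclic cofibration $L\wW{n}\to N\Ar[n]$ built from the Ehlers--Porter ordinal subdivision $Sd\,\Delta[n]\to N Sd[n]$, and the fact that a map inducing an isomorphism on $\tau_1$ induces an isomorphism of mapping spaces into the nerve of any ordinary category; this yields an isomorphism $N\Fun(\Ar[n],\exactcat)\cong\Map_{\sasset}(\wW{n},TN\exactcat)$, which is then restricted to the maximal sub-Kan complexes spanned by the relevant objects. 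You instead carry out by hand the combinatorics that the paper outsources to its appendix: a map $\wW{n}\to N^e\exactcat$ out of a discrete preaugmented bisimplicial object is determined by its values on $(0,0)$-, $(1,0)$-, and $(0,1)$-cells; the $(1,1)$-cells supply exactly the elementary squares of $\Ar[n]$ (note that every elementary square of $\Ar[n]$ whose four corners exist is automatically a staircase, since $(i+1,j)\in\Ar[n]$ forces $i+1\le j$); and a functor out of the poset $\Ar[n]$ is presented by generators and elementary-square relations, because any two monotone paths in an interval of $\Ar[n]$ are connected by square flips staying in the region $a\le b$ --- the point you rightly flag as the main obstacle and which should be written out, together with the observations that higher cells impose no new conditions (composites of admissible monomorphisms and epimorphisms are admissible, and pastings of bicartesian squares are bicartesian) and that naturality with respect to the maps $(\inda,\indb)\to[-1]$ is automatic once it holds for $(0,0)\to[-1]$, since $[-1]$ is terminal and every such map factors through $(0,0)$. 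What each approach buys: yours is elementary and self-contained, avoiding the appendix entirely, and makes transparent exactly where the $\Ar[n]$-versus-$\wW{n}$ subtlety of \cref{arnvswn} and the diagonal zero-object condition enter; the paper's argument is formulated so that the same lemma applies verbatim in \cref{comparisonsdotstable,comparisonsdotexactinfinity}, where the target is a quasi-category rather than a nerve of a groupoid --- there your generators-and-relations unpacking breaks down (a diagram in a quasi-category is not determined by its values on objects and generating arrows), and one obtains only a levelwise acyclic Kan fibration rather than an isomorphism, which is precisely what \cref{midanodyne}(b) is designed to deliver.
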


\begin{rmk} \label{arnvswn} 
The difference between $\Ar[n]$ and $\wW{n}$ (depicted in \eqref{PictureW4}), which have essentially the same shape, comes to the fore in the course of proving this theorem, so it is important to distinguish the different contexts in which they live. An important heuristic difference between $\operatorname{Ar}[n]$ and $\wW{n}$ is the following. In the former, we consider the diagram as a category, with all arrows being morphisms in a common category, and the depiction merely helps to organize the data. In the latter, there are two distinct kinds of morphisms, horizontal and vertical, for which the depiction as such is an essential feature; for example, a horizontal and a vertical morphism cannot be composed with one another. In particular, the nerve $N\operatorname{Ar}[n]$ has the structure of a (discrete) simplicial space, whereas $\wW{n}$, as given in \cref{nerveofWn}, is a (discrete) preaugmented bisimplicial space. The difference becomes key in our comparison here, since $\sdoten{n}(\exactcat)$ is given by diagrams indexed by the category $\operatorname{Ar}[n]$, whereas $\wW{n}$ is more closely related to the $\sdot$-construction of an augmented stable double Segal object.  The next lemma identifies the correct framework in which to compare the two indexing shapes $\operatorname{Ar}[n]$ and~$\wW{n}$.
\end{rmk}

The following technical lemma is the main tool in the argument of the proof of \cref{comparisonsdotexact}, and is formulated in the setting of $\sset$ endowed with the Joyal model structure, recalled in \cref{joyalmodel}.  We denote by $\tau_1\colon\sset\to\cat$ the left adjoint to the nerve functor, often called the \emph{fundamental category functor} \cite[\S 1]{joyalquasicategories}. We further explain the intuition behind the functor $T$ in \cref{TNC}.

\begin{lem} \label{midanodyne}
Consider the functor $T\colon\sset\to \sasset$ defined by
\[ (TX)_{\inda,\indb}:= \Map_{\sset}(\Delta[\inda]\times \Delta[\indb], X)\text{ and } (TX)_{-1}:= \Map_{\sset}(\Delta[0], X). \]
\begin{enumerate}
\item[(a)] 
The functor $T$ is part of a simplicial adjoint pair
\[ L\colon \sasset \leftrightarrows \sset\colon T. \]

\item[(b)] 
There is a natural acyclic cofibration
\[ L\wW{n}\longrightarrow N\operatorname{Ar}[n]. \]

\item[(c)] The acyclic cofibration from (b) induces an isomorphism of categories
\[ \tau_1(L\wW{n})\longrightarrow\tau_1(N\operatorname{Ar}[n])\cong \operatorname{Ar}[n]. \]

\item[(d)] 
If $f \colon X\longrightarrow Y$ is a map of simplicial sets inducing an isomorphism of categories $\tau_1(f)\colon \tau_1(X)\to\tau_1(Y)$, then for any category $\cD$ the map $f$ induces an isomorphism of mapping spaces
\[ \Map_{\sset}(Y,N\cD)\to \Map_{\sset}(X,N\cD). \] 
\end{enumerate} 
\end{lem}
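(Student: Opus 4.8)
The plan is to treat the four parts in the order (a), (b), (c), (d), since each relies on the previous. For (a) I would exhibit $T$ as a nerve functor associated to a $\Sigma$-object in $\sset$. Define $c\colon\Sigma\to\sset$ by $c(\inda,\indb)=\Delta[\inda]\times\Delta[\indb]$ and $c(-1)=\Delta[0]$, where on each morphism $(\inda,\indb)\to[-1]$ to the terminal object $c$ takes the unique map to $\Delta[0]$. By construction $(TX)_\sigma=\Map_{\sset}(c(\sigma),X)$, so $T$ is the nerve determined by $c$, and its left adjoint must be the left Kan extension of $c$ along the Yoneda embedding, i.e. the coend $L(A)=\int^{\sigma\in\Sigma}A_\sigma\times c(\sigma)$. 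The simplicial adjunction isomorphism
\[ \Map_{\sset}(LA,X)\;\cong\;\Map_{\sasset}(A,TX) \]
then follows from the standard end/coend manipulation, using that $\Map_{\sset}$ is the internal hom of the cartesian closed category $\sset$ and that mapping objects in the presheaf category $\sasset$ are computed as ends. This step is routine once $c$ is set up correctly.

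For (b) I would first write the map explicitly as a cocone out of the coend. An element $(i_0\le\dots\le i_\indc\le j_0\le\dots\le j_\indd)$ of $\wW{n}_{\indc,\indd}$ determines a functor $[\indc]\times[\indd]\to\operatorname{Ar}[n]$ sending $(a,b)$ to the object $(i_a\le j_b)$, which is well defined because $i_a\le i_\indc\le j_0\le j_b$; on the augmentation the element $(i,i)\in\wW{n}_{-1}$ is sent to the diagonal object $(i\le i)$. These assemble into the natural map $L\wW{n}\to N\operatorname{Ar}[n]$. I would then identify $L\wW{n}$ with the subcomplex of $N\operatorname{Ar}[n]$ consisting of those chains $(i_0,j_0)\le\dots\le(i_m,j_m)$ with $i_m\le j_0$, i.e. those whose entire ``$i$-part'' lies below the entire ``$j$-part'' (equivalently, the chains that fit into a single rectangle $I\times J$ with $\max I\le\min J$, hence into the image of a single cell). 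Finally I would prove that this inclusion is inner anodyne, and hence in particular an acyclic cofibration in the Joyal model structure, by filtering $N\operatorname{Ar}[n]$ by the missing nondegenerate simplices (those with $i_m>j_0$) and attaching them one at a time as fillers of inner horns. The bookkeeping of this filtration---choosing for each missing simplex the inner vertex and checking that all the remaining faces of the corresponding horn lie earlier in the filtration---is the main obstacle of the whole lemma; the case $n=1$, where the inclusion is exactly $\Lambda^1[2]\hookrightarrow\Delta[2]$, is the model to generalize.

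Part (c) I would obtain by a direct computation, independent of the filtration. Since $L$ and $\tau_1$ are both left adjoints they preserve colimits, and $\wW{n}$ is the colimit of representables over its category of elements; as $\tau_1(c(\inda,\indb))=[\inda]\times[\indb]$ and $\tau_1(c(-1))=[0]$, the category $\tau_1(L\wW{n})$ is the colimit of these posets, which one checks computes $\operatorname{Ar}[n]$, with the comparison map being exactly the one of the statement. As corroboration, once (b) is known, inner anodyne maps are bijective on vertices and induce isomorphisms on $\tau_1$ (the inner cells impose precisely the composites $\tau_1$ already forces), and $\tau_1(N\operatorname{Ar}[n])\cong\operatorname{Ar}[n]$ because $\tau_1\circ N=\id$.

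For (d) I would use the adjunction $\tau_1\dashv N$ together with the fact that $\tau_1$ preserves finite products, in particular $\tau_1(X\times\Delta[m])\cong\tau_1(X)\times[m]$. These give, for any simplicial set $X$ and any category $\cD$, a natural isomorphism
\[ \Map_{\sset}(X,N\cD)_m=\Hom_{\sset}(X\times\Delta[m],N\cD)\cong\Hom_{\cat}(\tau_1(X)\times[m],\cD), \]
so that $\Map_{\sset}(X,N\cD)\cong N(\Fun(\tau_1(X),\cD))$ naturally in $X$. Since $f$ induces an isomorphism $\tau_1(X)\cong\tau_1(Y)$, applying $N\Fun(-,\cD)$ yields the desired isomorphism $\Map_{\sset}(Y,N\cD)\xrightarrow{\cong}\Map_{\sset}(X,N\cD)$. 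The only nontrivial input is the product-preservation property of $\tau_1$, which I would cite from the appendix.
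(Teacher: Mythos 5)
Your parts (a), (c), and (d) are sound and essentially match the paper's route: (a) is the enriched Kan extension/coend construction (the paper cites Kelly); your corroborating argument in (c) via (b), the fact that $\tau_1$ sends inner anodyne maps to isomorphisms (\cref{jtprop1.11}), and $\tau_1 N = \id$ is exactly the paper's derivation; and your explicit computation in (d), giving $\Map_{\sset}(X,N\cD)\cong N\Fun(\tau_1(X),\cD)$ naturally in $X$, is what the paper compresses into ``$(\tau_1,N)$ is a simplicial adjunction'' --- though note that the appendix does not actually record product-preservation of $\tau_1$, so you would need to cite that standard fact (Gabriel--Zisman) externally. Your identification in (b) of $L\wW{n}$ with the subcomplex of $N\operatorname{Ar}[n]$ consisting of chains $(i_0,j_0)\le\dots\le(i_m,j_m)$ with $i_m\le j_0$ is also correct: it agrees with the paper's identification $L\wW{n}\cong\delta^*\sigma^*\Delta[n]=Sd(\Delta[n])$, the ordinal subdivision of the simplex, with your map being the Ehlers--Porter unit $Sd(\Delta[n])\to N Sd[n]\cong N\operatorname{Ar}[n]$.

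The gap is exactly where you flag it: the claim that this inclusion is inner anodyne. The paper does not prove this either; it imports it wholesale as Ehlers--Porter's theorem (\cref{epthm6.1}: the unit is a weak anodyne extension, i.e., a finite composite of pushouts of inner horn inclusions), and likewise your ``one checks'' in the direct colimit computation for (c) is precisely their \cref{epprop3.6} ($\tau_1 Sd(\Delta[n])\cong Sd[n]$). Your proposed filtration is plausible but unexecuted, and as stated it cannot work literally: a pushout along an inner horn $\Lambda^k[m]\hookrightarrow\Delta[m]$ adjoins \emph{two} nondegenerate simplices at once, the filler and its missing $k$-th face, so the missing simplices (those with $i_m>j_0$) cannot be attached ``one at a time''; they must be organized into matched pairs with a coherent choice of inner vertex such that all remaining faces of each horn lie earlier in the filtration. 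Your own model case already shows the pairing: for $n=1$, the $2$-simplex and the long edge $(0,0)\le(1,1)$ both have $i_m>j_0$ and are added by a single pushout of $\Lambda^1[2]$. Designing this pairing for all $n$ is the entire combinatorial content of the cited theorem, so you must either carry it out in full or cite \cite{EhlersPorter} as the paper does; as written, the heart of (b) is asserted rather than proved.
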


We postpone the proof to the appendix, where we also recall more of the main ingredients involved, including background on quasi-categories.

\begin{rmk} \label{TNC}
Applying the functor $T$ to the ordinary nerve $N\cD$ of a category $\cD$, one can verify that $TN\cD$ is the preaugmented bisimplicial simplicial set described as follows.
\begin{enumerate}
\item The augmentation is simply the nerve of $\cD$:
\[ (TN\cD)_{-1}=N\cD. \]

\item \label{GridsBisimplicesTNC}
For each $(\inda,\indb)$, we recover the nerve of the category of $(\inda\times\indb)$-grids in $\cD$:
\[ (TN\cD)_{\inda, \indb}=N\Fun([\inda]\times[\indb], \cD). \]
\end{enumerate}
The bisimplicial structure is induced by the bi-cosimplicial structure on the collection of categories $[\inda]\times[\indb]$ for $\inda, \indb \geq 0$. The additional map $(TN\cD)_{-1}\to (TN\cD)_{0,0}$ is the identity map.
\end{rmk}

We now establish our desired comparison between $\sdote(\exactcat)$ and~$\sdot(N^e\exactcat)$.

\begin{proof}[Proof of \cref{comparisonsdotexact}]
The isomorphism of categories 
\[ \tau_1(L\wW{n})\xrightarrow{\cong}\tau_1(N\operatorname{Ar}[n])\cong \operatorname{Ar}[n] \]
from \cref{midanodyne}(c) induces, via \cref{midanodyne}(d) and \cref{midanodyne}(a), isomorphisms of simplicial sets
\begin{gather}\label{largeridentification}
\begin{aligned}
N\Fun(\operatorname{Ar}[n],\exactcat)&\cong \Map_{\sset}(N\operatorname{Ar}[n], N\exactcat)\\
&\cong \Map_{\sset}(L\wW{n}, N\exactcat)\\
&\cong \Map_{\sasset}(\wW{n}, TN\exactcat).
\end{aligned}
\end{gather}

In particular, because $N\Fun(\operatorname{Ar}[n],\exactcat)$ is a nerve of a category and hence a quasi-category, we can conclude that the simplicial set $\Map_{\sasset}(\wW{n}, TN\exactcat)$ is a quasi-category.

We observe that for every $n$ the nerve of the groupoid $\sdoten{n}(\exactcat)$ is the maximal Kan complex contained in $N\Fun(\operatorname{Ar}[n],\exactcat)$ spanned by the objects of $\sdoten{n}(\exactcat)$. On the other hand, taking advantage of the explicit description of  $TN\exactcat$ from \cref{TNC}, one can identify  
$\sdotn{n}(N^e(\exactcat))$ with its nerve, which is the maximal Kan complex contained in $\Map_{\sasset}(\wW{n}, TN\exactcat)$ spanned by the objects of $\sdotn{n}(N^e\exactcat)$.

By comparing the object sets of $\sdoten{n}(\exactcat)$ and $\sdotn{n}(N^e\exactcat)$, and implicitly applying the nerve functor to these groupoids, we see that the isomorphism from (\ref{largeridentification}), restricts along the inclusions
\[ \sdoten{n}({\exactcat})\subset N\Fun(\operatorname{Ar}[n],\exactcat)\text{ and }\sdotn n(N^e(\exactcat))\subset\Map_{\sasset}(\wW{n}, TN\exactcat) \]
to an isomorphism of simplicial sets
\[ \begin{tikzcd}
\sdoten{n}(\exactcat) \arrow[d, hook] \arrow[r, "\cong"] & \sdotn{n}(N^e\exactcat) \arrow[d, hook]\\
N\Fun(\operatorname{Ar}[n],\exactcat) \arrow[r, "\cong"]& \Map_{\sasset}(\wW{n}, TN\exactcat)
\end{tikzcd} \]
as desired. 
\end{proof}

\section{The $\sdot$-construction for stable $(\infty,1)$-categories} 
\label{stableinfinitycategories}

We now turn to variants of the $\sdot$-construction whose input is some kind of $(\infty,1)$-category; in this section we consider stable $(\infty,1)$-categories. 

We use the quasi-category model for $(\infty,1)$-categories, for which a complete account can be found in \cite{Joyal} or \cite{htt}. For any category $D$ and quasi-category $\cQ$ there exists a quasi-category $\cQ^{D}$ of $D$-shaped diagrams in $\cQ$ (see \cref{mappingqcat})
and a notion of \emph{limit} and \emph{colimit} for any of such diagrams \cite[1.2.13.4]{htt}. In particular, it makes sense to say when a square in $\cQ$ is \emph{cartesian} or \emph{cocartesian}, and when an object of $\cQ$ is \emph{initial} or \emph{terminal}.  In particular, as for ordinary categories, a \emph{zero object} is one which is both initial and terminal.

The theory of stable quasi-categories is developed by Lurie in \cite{LurieHA}, and we follow the definitions given there. We are using the reformulation of the definition for stability from \cite[Proposition 1.1.3.4]{LurieHA}. 

\begin{defn}
A quasi-category $\cQ$ is \emph{stable} if it has a zero object, all finite limits and colimits exist, and a square is cartesian if and only if it is cocartesian. 

There is a natural notion of functor between stable quasi-categories which preserves this structure, called an \emph{exact functor} \cite[\S 1.1.4]{LurieHA}; we thus have a category of stable quasi-categories which we denote by $\stqcat$.
\end{defn}

\begin{ex}
Let $\cM$ be a stable combinatorial simplicial model category, such as the category of spectra in the sense of stable homotopy theory or the category of chain complexes of modules over a ring \cite{ss}.  The underlying quasi-category associated to $\mathcal M$ as in \cite[Definition 1.3.4.15]{LurieHA} has the structure of a stable quasi-category, by means of \cite[Theorem 4.2.4.1 and Corollary 4.2.4.8]{htt}.  An alternate but equivalent approach is given by Lenz \cite{lenz}.  
\end{ex}

\subsection{The stable nerve}

As for exact categories, our first goal is to define a nerve functor whose input is a stable quasi-category and whose output is a preaugmented bisimplicial space; we want to show that this output is in fact an augmented stable double Segal space.

We begin with the definition of the stable nerve.  For any quasi-category $\cQ$, we denote by $J(\cQ)$ the maximal Kan complex spanned by its vertices, as in \cref{jq}. 

\begin{defn}
The \emph{stable nerve} $N^s\cQ$  of a stable quasi-category $\cQ$ is the preaugmented bisimplicial space $N^s\cQ\colon \Sigma^{\op}\to \sset$ defined as follows.
\begin{enumerate}
\item The space in degree $(\inda,\indb)$ is the maximal Kan complex
\[ (N^s\cQ)_{\inda,\indb}\subset J(\cQ^{[\inda]\times[\indb]}) \]
spanned by $(\inda\times \indb)$-grids in $\cQ$ with all squares bicartesian, i.e., diagrams
$F\colon[\inda]\times[\indb]\to \cQ$ such that
for all $0\leq i\leq k\leq  \inda$ and $0\leq j\leq l\leq \indb$, the square
\begin{center}
 \begin{tikzpicture}
\def\l{1cm}
  \begin{scope}
  \draw (0,0) node (a00){$F(i,j)$};
\draw (2.5*\l,0) node (a01){$F(i,\ell)$};
\draw (0,-\l) node (a10){$F(k,j)$};
\draw (2.5*\l,-\l) node (a11){$F(k,\ell)$};
\draw[->] (a00)--(a01);
\draw[->] (a01)--(a11);
\draw[->] (a10)--(a11);
\draw[->] (a00)--(a10);
  \end{scope}
  \end{tikzpicture}
  \end{center}
  is bicartesian.

\item The augmentation space is the maximal Kan complex
\[ (N^s\cQ)_{-1}\subset J(\cQ) \]
spanned by the zero objects of $\cQ$.
\end{enumerate}
The bisimplicial structure is induced by the bi-cosimplicial structure of the collection of categories of the form $[\inda]\times[\indb]$ for $\inda, \indb \geq 0$.  The additional map $N^s\cQ_{-1}\to N^s\cQ_{0,0}$ is the canonical inclusion of the Kan complex of zero objects into $J(\cQ)$. 

It is straightforward to check that the stable nerve can be defined on exact functors and hence defines a functor 
\[ N^s\colon\stqcat\longrightarrow \sasset. \]
\end{defn}

\begin{rmk} 
In light of the construction in the previous section, it is worth noting what is perhaps a conspicuous absence of the two different flavors of morphisms which are used for the horizontal and vertical directions.  The main idea is that in defining the stable nerve of a  stable quasi-category, we have the desired conditions on the squares, even with only one kind of morphism that fills both roles.  The distinction will be recovered in the still more general notion of (proto-)exact quasi-category in the next section.
\end{rmk}

For the rest of this section, assume that $\cQ$ be a stable quasi-category. We want to show that the stable nerve functor factors through the subcategory of stable augmented double Segal spaces, via the following theorem. 

\begin{thm}\label{thm nerve 2}
The stable nerve $N^s(\cQ)$ is an augmented stable double Segal space. 
\end{thm}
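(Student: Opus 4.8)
The plan is to mirror the three-step strategy of \cref{exactcategories}, systematically replacing groupoids and $2$-pullbacks by Kan complexes and homotopy pullbacks of simplicial sets. We must verify the double Segal condition (\cref{doublesegal}), stability (\cref{babystability}), and augmentation (\cref{babyaugmentation}), each of which is phrased through homotopy pullbacks $\htimes{}$. The first move is to fix a strict model for these: I would invoke the space-level analogue of \cref{lemmaisofibration}, namely \cref{lemmaKanfibration}, which states that for any injective $\theta\colon[\inda]\to[\inda']$ the restriction map $(N^s\cQ)_{\inda',\indb}\to(N^s\cQ)_{\inda,\indb}$ (and likewise in the other simplicial direction, together with the inclusion $(N^s\cQ)_{-1}\to(N^s\cQ)_{0,0}$) is a Kan fibration. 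Exactly as in \cref{model2pullback}, this lets us compute every homotopy pullback appearing in the three conditions as an ordinary pullback of Kan complexes.

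For the double Segal condition I would, following \cref{exactnervedoubleSegal}, treat a representative small case such as $(N^s\cQ)_{1,2}\to(N^s\cQ)_{1,1}\ttimes{(N^s\cQ)_{1,0}}(N^s\cQ)_{1,1}$, the general case being identical. The underlying fact is that a $(1\times 2)$-grid of bicartesian squares is equivalent data to a pair of bicartesian squares glued along a shared edge, since a horizontal pasting of cartesian (equivalently cocartesian) squares is again bicartesian. In the exact case one exhibited a strict gluing functor; in the present setting the homotopy inverse to the restriction is supplied by the theory of Kan extensions: the full subcategory of $\cQ^{[1]\times[2]}$ on bicartesian grids is, by the pasting lemma and the contractibility of the spaces of (co)limit cones \cite[\S4.3]{htt}, equivalent via restriction to the corresponding subcategory on the two-square shape. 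Passing to maximal Kan complexes $J(-)$ then yields the required equivalence.

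Stability and augmentation follow the same pattern. The stability map $(N^s\cQ)_{1,1}\to(N^s\cQ)_{1,0}\htimes{(N^s\cQ)_{0,0}}(N^s\cQ)_{0,1}$ records the cospan inside a bicartesian square, and its homotopy inverse forms the pullback of a cospan; this pullback is cartesian, hence---and this is precisely where the stability of $\cQ$ is used---also cocartesian, so it lands in $(N^s\cQ)_{1,1}$. Contractibility of the space of cartesian fillers of a fixed cospan shows the two maps are mutually inverse, and the span direction is dual, so \cref{babystability} applies. For augmentation, the composite $(N^s\cQ)_{1,0}\htimes{(N^s\cQ)_{0,0}}(N^s\cQ)_{-1}\to(N^s\cQ)_{1,0}\to(N^s\cQ)_{0,0}$ of \cref{babyaugmentation} has as source the space of morphisms of $\cQ$ with terminal target; since the space $(N^s\cQ)_{-1}$ of zero objects is contractible and each mapping space into a zero object is contractible, this composite is a trivial fibration onto $(N^s\cQ)_{0,0}$, with homotopy inverse sending an object to its essentially unique map to a chosen zero object. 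The other direction is dual, and \cref{babyaugmentation} is thereby verified.

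The main obstacle is the replacement of the strict, functorially chosen pullbacks and pushouts available for groupoids by homotopy-coherent choices: every such choice must be recast as an appeal to the contractibility of a space of Kan extensions in $\cQ$, and one must ensure these equivalences are compatible with passage to the maximal Kan complexes $J(-)$ that define the levels of $N^s\cQ$. The accompanying Kan fibration statement \cref{lemmaKanfibration}, which legitimizes modeling the homotopy pullbacks by strict ones, is the remaining technical crux and is deferred to the appendix.
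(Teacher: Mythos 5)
Your proposal is correct and follows the paper's overall strategy closely: first use \cref{lemmaKanfibration0,lemmaKanfibration} to replace every homotopy pullback in the double Segal, stability, and augmentation conditions by a strict pullback, then establish each condition by producing a Joyal equivalence of quasi-categories and applying the functor $J$, which preserves pullbacks and takes Joyal equivalences and quasi-fibrations to weak equivalences and Kan fibrations (\cref{jtprop1.16}). There are two internal variations worth recording. For the Segal maps, the paper does not restrict a subcategory equivalence directly as you do; instead, in \cref{stablenervedoubleSegal} it exhibits the Segal map of $N^s\cQ$ as the strict pullback of the Segal map of the Reedy fibrant Segal space $\Gamma(\cQ^{[\inda]})$ (\cref{gammadefn,jtprop4.10}) along a Kan fibration of connected-component inclusions, concluding by right properness of the Quillen model structure. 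Your alternative---restricting the acyclic fibration induced by the spine inclusion to the full sub-quasi-categories of bicartesian diagrams, which is legitimate since \cite[Lemma 4.4.2.1]{htt} guarantees that a vertex lies in the subcategory exactly when its image does---rests on the same ingredients and works equally well; note, though, that contractibility of spaces of (co)limit cones is not what this step needs (the pasting lemma and the inner anodyne spine inclusion suffice), whereas that contractibility, in the precise form of \cite[Proposition 4.3.2.15]{htt}, is exactly what drives the stability step, as in the paper. For augmentation, the paper fixes a zero object $z$, identifies the source of the augmentation map with $J$ of the fat slice $\cQ_{//z}$, and cites Joyal's theorem that the source map $\cQ_{//z}\to\cQ$ is a Joyal equivalence; your fiberwise argument via contractible mapping spaces into a terminal object is the content of that theorem (terminal objects are characterized by the slice projection being a trivial fibration), but your assertion that the composite is a trivial fibration needs that slice machinery, or a citation such as \cite[Proposition 1.2.12.4]{htt}, to be justified rather than taken as read. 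Deferring the Reedy fibrancy statement of \cref{lemmaKanfibration} is consistent with the paper, which likewise only sketches that technical argument by reference to \cref{jtprop4.10}.
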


As for the exact nerve, before proving this theorem, we need some technical results that allow us to model all homotopy pullbacks in question by ordinary pullbacks.

\begin{lem} \label{lemmaKanfibration0}
The canonical map 
\[ (N^s\cQ)_{-1} \longrightarrow (N^s\cQ)_{0,0} \] 
is a Kan fibration. In particular, there is an equivalence of simplicial sets
\[ (N^s\cQ)_{1,0} \ttimes{(N^s\cQ)_{0,0}} (N^s\cQ)_{-1}\simeq (N^s\cQ)_{1,0} \htimes{(N^s\cQ)_{0,0}} (N^s\cQ)_{-1}. \]
\end{lem}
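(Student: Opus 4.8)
The plan is to mirror the strategy used for the groupoid analogue in \cref{lemmaisofibration0}, replacing isofibrations of groupoids by Kan fibrations of spaces. First I would identify the two simplicial sets concretely: $(N^s\cQ)_{0,0}$ is the maximal Kan complex $J(\cQ)$ on the objects of $\cQ$, while $(N^s\cQ)_{-1}$ is the maximal Kan complex spanned by the zero objects of $\cQ$, and the map in question is the evident inclusion.

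The key observation is that the vertices of $(N^s\cQ)_{-1}$ constitute a union of connected components of $(N^s\cQ)_{0,0}=J(\cQ)$. Indeed, two objects of $\cQ$ lie in the same path component of $J(\cQ)$ precisely when they are equivalent in $\cQ$, and being a zero object, i.e.\ both initial and terminal, is invariant under equivalence; hence any object equivalent to a zero object is again a zero object, so the set of zero objects is closed under the path-component relation. Consequently the full subcomplex $(N^s\cQ)_{-1}$ of the Kan complex $J(\cQ)$ is exactly the union of the corresponding path components.

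Next I would record the general fact that the inclusion $A\hookrightarrow X$ of a union of connected components of a Kan complex $X$ is a Kan fibration, which is the spaces-counterpart of \cref{rem sset grpd}. This is a direct horn-filling argument: for $n\ge 1$ the horn $\Lambda^n_k$ is connected, so in any lifting problem its image lands in a single component contained in $A$; since $\Delta[n]$ is also connected and agrees with that image along the horn, the bottom map $\Delta[n]\to X$ factors through the same component of $A$, which supplies the required diagonal filler. Applying this to $A=(N^s\cQ)_{-1}\hookrightarrow X=J(\cQ)=(N^s\cQ)_{0,0}$ shows that the canonical map is a Kan fibration.

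Finally, for the ``in particular'' statement I would invoke the standard property of the Kan--Quillen model structure (the simplicial-set counterpart of \cref{2hpullback}) that a strict pullback of a diagram of Kan complexes in which one leg is a Kan fibration is a model for the homotopy pullback. Since $(N^s\cQ)_{1,0}$, $(N^s\cQ)_{0,0}$, and $(N^s\cQ)_{-1}$ are Kan complexes and the map $(N^s\cQ)_{-1}\to(N^s\cQ)_{0,0}$ is a Kan fibration by the previous step, the strict pullback computes the homotopy pullback, yielding the claimed equivalence. The only mildly delicate point is the identification of the zero objects with a union of connected components, which rests on the equivalence-invariance of the zero-object condition; the remaining steps are formal.
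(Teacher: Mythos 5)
Your proposal is correct and takes essentially the same approach as the paper: the paper likewise observes that $(N^s\cQ)_{-1}\to(N^s\cQ)_{0,0}$ is the inclusion of a connected component of a Kan complex --- citing \cite[Proposition 1.2.12.9]{htt} to see that all zero objects are equivalent, hence span a single component --- and concludes that this inclusion is a Kan fibration, so the strict pullback models the homotopy pullback. The only differences are cosmetic: you deduce that the zero objects span a union of components via equivalence-invariance (a single component, by the cited result), and you spell out the horn-filling and full-subcomplex details that the paper leaves implicit.
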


\begin{proof}
We observe that $(N^s\cQ)_{-1}$ is a connected component of $(N^s\cQ)_{0,0}$. Indeed, it is the full sub-Kan complex spanned by the vertices given by zero objects, which are all equivalent to one another by \cite[Proposition 1.2.12.9]{htt}.

Given that the map $(N^s\cQ)_{-1} \to (N^s\cQ)_{0,0}$ is the inclusion of a connected component, it is a Kan fibration and the pullback is in fact a homotopy pullback.
\end{proof}

\begin{lem} \label{lemmaKanfibration}
For any injective map $\theta\colon [\inda] \to [\inda']$ in $\Delta$ the induced map
\[ (N^s\cQ)_{\inda', \indb} \longrightarrow (N^s\cQ)_{\inda, \indb} \]
is a Kan fibration between Kan complexes.
In particular, for any $0\leq i \leq \inda$,
\[ (N^s\cQ)_{i,\indb} \ttimes{(N^s\cQ)_{0,\indb}} (N^s\cQ)_{\inda-i,\indb}\simeq (N^s\cQ)_{i,\indb} \htimes{(N^s\cQ)_{0,\indb}} (N^s\cQ)_{\inda-i,\indb}. \]
\end{lem}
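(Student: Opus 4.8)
The plan is to prove the two assertions in turn, reducing the Kan fibration claim to a single coface map and then exhibiting the bicartesian grids as a \emph{union of connected components} rather than merely a full sub-Kan-complex. First I would reduce to the case of an elementary coface $\delta^m\colon[\inda]\to[\inda+1]$: every injection $\theta$ in $\Delta$ factors as a composite of such maps, and a composite of Kan fibrations is again a Kan fibration; both source and target are already Kan complexes by construction, being sub-Kan-complexes of $J(\cQ^{[\inda]\times[\indb]})$. I would also record at this point that restriction along $\delta^m\times\id_{[\indb]}$ really does carry bicartesian grids to bicartesian grids, so that the map $(N^s\cQ)_{\inda+1,\indb}\to(N^s\cQ)_{\inda,\indb}$ is well defined: deleting a row pastes two vertically adjacent squares, and the pasting law guarantees that the composite of two bicartesian squares is bicartesian.

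The key observation is that $(N^s\cQ)_{\inda,\indb}$ is a union of connected components of $J(\cQ^{[\inda]\times[\indb]})$. Indeed, two vertices lie in the same component exactly when the corresponding diagrams are equivalent in the functor quasi-category $\cQ^{[\inda]\times[\indb]}$, i.e.\ connected by a pointwise equivalence; since being cartesian (equivalently cocartesian) is invariant under pointwise equivalence of diagrams, the condition ``all squares bicartesian'' is constant on each component. Consequently the inclusion $(N^s\cQ)_{\inda,\indb}\hookrightarrow J(\cQ^{[\inda]\times[\indb]})$ is simultaneously a monomorphism and a Kan fibration.

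Next I would assemble the fibration. Restriction along the monomorphism $\Delta[\inda]\times\Delta[\indb]\hookrightarrow\Delta[\inda+1]\times\Delta[\indb]$ yields a categorical fibration $\cQ^{[\inda+1]\times[\indb]}\to\cQ^{[\inda]\times[\indb]}$ of quasi-categories, by the cartesian structure of the Joyal model structure recalled in the appendix (see \cref{jtprop4.10}); applying the maximal Kan complex functor then gives a Kan fibration $J(\cQ^{[\inda+1]\times[\indb]})\to J(\cQ^{[\inda]\times[\indb]})$. Precomposing with the component inclusion of the previous paragraph exhibits $(N^s\cQ)_{\inda+1,\indb}\to J(\cQ^{[\inda]\times[\indb]})$ as a Kan fibration that factors through the monomorphism $(N^s\cQ)_{\inda,\indb}\hookrightarrow J(\cQ^{[\inda]\times[\indb]})$. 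A short diagram chase finishes the first claim: whenever a Kan fibration factors through a monomorphism, the induced map to the subobject is itself a Kan fibration, since a lift against a horn inclusion for the composite together with injectivity of the monomorphism produces the required lift.

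Finally, for the ``in particular'' statement, the two maps $(N^s\cQ)_{i,\indb}\to(N^s\cQ)_{0,\indb}$ and $(N^s\cQ)_{\inda-i,\indb}\to(N^s\cQ)_{0,\indb}$ are induced by the injections $[0]\to[i]$, $0\mapsto i$, and $[0]\to[\inda-i]$, $0\mapsto 0$, hence are Kan fibrations by the first part; as one leg of the cospan is a fibration, the strict pullback computes the homotopy pullback, giving the asserted equivalence. The main obstacle is the component observation: it is precisely what circumvents the fact that restriction does \emph{not} reflect the bicartesian condition (a merged square can be bicartesian without its two constituents being so), so the bicartesian locus is not a strict pullback of $J(\cQ^{[\inda]\times[\indb]})$ and must instead be treated as a clopen union of components.
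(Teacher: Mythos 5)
Your proof is correct, and it takes a genuinely different route from the paper's. The paper deduces the lemma from Reedy fibrancy of the bisimplicial space $i^*N^s\cQ$ --- established by a technical argument modeled on the proof of \cref{jtprop4.10} --- together with the observation that an injective $\theta$ induces a cofibration of representable bisimplicial sets $\Delta[\inda,\indb]\to\Delta[\inda',\indb]$, so that the induced map of mapping spaces out of these representables is a Kan fibration. You instead argue at the level of a single structure map: your clopen-component observation (a version of which the paper also invokes, in the proof of \cref{stablenervedoubleSegal}, where it phrases the inclusion $(N^s\cQ)_{\inda,\indb}\hookrightarrow J(\cQ^{[\inda]\times[\indb]})$ as an inclusion of a connected component) makes that inclusion a Kan fibration; cartesianness of the Joyal model structure --- which is \cref{joyalmodel}, not \cref{jtprop4.10} as you cite --- together with \cref{jtprop1.16} makes $J(\cQ^{[\inda']\times[\indb]})\to J(\cQ^{[\inda]\times[\indb]})$ a Kan fibration; and your mono-factoring chase correctly transfers the lifting property to the induced map of subcomplexes. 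So both proofs run on the same two engines from the appendix, but yours avoids the Reedy machinery entirely; what the paper's packaging buys is generality and reusability, since Reedy fibrancy gives the lifting property against arbitrary monomorphisms of bisimplicial sets and is reused later (for instance in the outline of \cref{thm nerve 3} and in the injective fibrancy appearing in \cref{SigmaFibrancy} and \cref{comparisoninitialpath}), whereas your argument delivers exactly the face maps in one simplicial direction that this lemma asserts. Two small simplifications are available to you: since the definition of $(N^s\cQ)_{\inda,\indb}$ requires \emph{all} squares of a grid, not only the unit squares, to be bicartesian, restriction along any injective $\theta$ merely selects a subgrid, so well-definedness needs no pasting law; and for the same reason your reduction to elementary cofaces is unnecessary, as the factorization through $J(\cQ^{[\inda']\times[\indb]})$ works verbatim for any injective $\theta$.
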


\begin{proof}
A rather technical argument, similar to the proof of \cref{jtprop4.10}, shows that the bisimplicial space $N^s\cQ$ is Reedy fibrant.  Moreover, if we denote by $\Delta[\inda, \indb]$ the bisimplicial set represented by an object $(\inda, \indb)$ of $\Delta\times\Delta$, the map $\theta$ induces a cofibration of bisimplicial sets
\[ \Delta[\inda,\indb]\longrightarrow\Delta[\inda',\indb]. \]
It follows that the induced map
\[ (N^s\cQ)_{\inda', \indb}=\Map(\Delta[\inda',\indb],i^*N^s\cQ) \longrightarrow \Map(\Delta[\inda,\indb],i^*N^s\cQ)= (N^s\cQ)_{\inda, \indb} \]
is a Kan fibration, as desired.
\end{proof}

\label{modelhpullback}
Just as in the previous section, the previous two lemmas allow us to verify the conditions for being augmented, stable, and double Segal using ordinary pullbacks; see \cref{model2pullback}. With these results in hand, we are able to resume our proof that $N^s\cQ$ is an augmented stable double Segal space.

\begin{prop} \label{stablenervedoubleSegal}
The preaugmented bisimplicial space $N^s\cQ$ is double Segal.
\end{prop}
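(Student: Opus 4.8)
The plan is to prove that the stable nerve $N^s\cQ$ satisfies the double Segal condition of \cref{doublesegal}, namely that for all $\inda, \indb \geq 1$ the two spine inclusion maps
\[ (N^s\cQ)_{\inda,\indb} \longrightarrow (N^s\cQ)_{\inda,1} \htimes{(N^s\cQ)_{\inda,0}} \cdots \htimes{(N^s\cQ)_{\inda,0}} (N^s\cQ)_{\inda,1} \]
and the symmetric one in the first variable are weak equivalences. By \cref{lemmaKanfibration} the relevant homotopy pullbacks are modelled by strict pullbacks, so it suffices to show that the corresponding maps into \emph{strict} pullbacks (iterated fiber products of Kan complexes) are equivalences. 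As in \cref{exactnervedoubleSegal}, I would reduce to a representative case, say the map in the second variable with $\inda=1$, $\indb=2$, since the general case follows by the same argument applied levelwise and by induction along the spine.

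The key step is to identify the strict fiber product. Using \cref{lemmaKanfibration} we may write the target as the honest pullback
\[ (N^s\cQ)_{1,1} \ttimes{(N^s\cQ)_{1,0}} (N^s\cQ)_{1,1}, \]
whose objects are pairs of bicartesian squares sharing a common vertical edge (the image under $(N^s\cQ)_{1,1}\to(N^s\cQ)_{1,0}$). I would construct the comparison in terms of the diagram quasi-category $\cQ^{[1]\times[2]}$: a $(1\times 2)$-grid of bicartesian squares restricts to its two constituent squares, and conversely a pasting-of-squares argument reconstructs the grid. Concretely, the spine inclusion $[1]\times\{0,1\}\amalg_{[1]\times\{1\}}[1]\times\{1,2\}\hookrightarrow[1]\times[2]$ is inner anodyne (it is a spine inclusion of a product of standard simplices), so by the standard lifting properties of quasi-categories (\cref{mappingqcat}, and the results recalled in the appendix) the restriction functor $\cQ^{[1]\times[2]}\to\cQ^{[1]\times[2]}|_{\text{spine}}$ is a trivial fibration onto the full subcategory, and hence an equivalence after passing to maximal Kan complexes $J(-)$. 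The content specific to the stable nerve is that this equivalence \emph{restricts} to the bicartesian-grid subcomplexes: given two bicartesian squares with matching edge, their composite square is bicartesian (the pasting law for pullbacks in a quasi-category, plus the stability identification of cartesian with cocartesian), so the reconstructed grid lands in $(N^s\cQ)_{1,2}$. This gives a well-defined inverse up to the coherences supplied by the trivial fibration.

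I would then assemble these pieces: the restriction-of-$J$ map is an equivalence of Kan complexes by the inner-anodyne/trivial-fibration argument, it carries the bicartesian subcomplexes isomorphically onto the matching-pairs subcomplex by the pasting law and stability, and by \cref{lemmaKanfibration} the matching-pairs strict pullback computes the homotopy pullback. Composing, the Segal map is a weak equivalence. The same reasoning, applied to a general spine inclusion $\{0,1,\dots,\indb\}\hookrightarrow[\indb]$ in one variable with $\inda$ fixed (and symmetrically in the other variable), gives all the maps of \cref{doublesegal}, completing the proof that $N^s\cQ$ is double Segal.

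The main obstacle I anticipate is the bookkeeping needed to show that the equivalence $J(\cQ^{[\inda]\times[\indb]})\to J(\cQ^{\text{spine}})$ restricts to an equivalence between the maximal Kan complexes of \emph{bicartesian} grids rather than arbitrary diagrams. One direction (restricting a bicartesian grid to its spine of bicartesian squares) is immediate; the subtle direction is that every compatible family of squares along the spine extends to a grid all of whose squares are bicartesian, which requires invoking the pasting law for cartesian squares in a quasi-category together with the stability hypothesis to control the non-spine squares. Making this precise in the $(\infty,1)$-categorical setting, rather than the $1$-categorical gluing used in \cref{exactnervedoubleSegal}, is where the genuine work lies, and it is exactly there that the appendix results on quasi-categories are needed.
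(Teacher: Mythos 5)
Your proposal is correct and takes essentially the same approach as the paper: both reduce to strict pullbacks via \cref{lemmaKanfibration}, compare the Segal map with the corresponding map between the ambient spaces $J(\cQ^{[\inda]\times[\indb]})$ (a weak equivalence by the Segal property of diagram quasi-categories), and use the pasting law \cite[Lemma 4.4.2.1]{htt} to show the bicartesian-grid subcomplexes form a strict pullback over the matching-pairs subcomplex. The only cosmetic difference is that you justify the ambient equivalence directly from the inner anodyne spine inclusion (yielding a trivial fibration that pulls back), whereas the paper quotes the Reedy fibrant Segal space $\Gamma(\cQ^{[\inda]})$ of \cref{jtprop4.10} together with right properness and the observation that the inclusion of connected components is a Kan fibration.
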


\begin{proof}
By \cref{lemmaKanfibration}, showing that the Segal map in the second simplicial direction is a weak equivalence is equivalent to showing that the map
\begin{equation}\label{stable Segal map}
(N^s \cQ)_{\inda, \indb} \longrightarrow (N^s \cQ)_{\inda, i} \ttimes{(N^s \cQ)_{\inda, 0}} (N^s \cQ)_{\inda, \indb-i}
\end{equation}
is a weak equivalence. We give the proof for $\indb=2$, $i=1$; the general case and the analogous maps in the other simplicial direction can be treated similarly.

The map \eqref{stable Segal map} a fits into a commutative square
\begin{equation}
\label{PullbackStableNerveSegal}
\begin{tikzcd}[baseline=(current  bounding  box.center)]
 (N^s \cQ)_{\inda,2} \arrow[r]\arrow[d, hook] & (N^s \cQ)_{\inda,1} \ttimes{(N^s \cQ)_{q,0}} (N^s \cQ)_{\inda,1} \arrow[d, hook]\\
 J(\cQ^{[\inda]\times[2]}) \arrow[r] & J(\cQ^{[\inda]\times[1]}) \ttimes{J(\cQ^{[\inda]\times[0]})} J(\cQ^{[\inda]\times[1]}).\end{tikzcd}
\end{equation}

This diagram can be shown to be a pullback square. For instance, when $\inda=1$, an element of $(N^s \cQ)_{1,2}$ essentially encodes two adjacent squares together with their composite such that all three squares are bicartesian.  On the other hand, an element of the pullback of the diagram
\[ J(\cQ^{[1]\times[2]}) \to J(\cQ^{[1]\times[1]}) \ttimes{J(\cQ^{[1]\times[0]})} J(\cQ^{[1]\times[1]}) \leftarrow (N^s \cQ)_{1,1} \ttimes{(N^s \cQ)_{1,0}} (N^s \cQ)_{1,1} \]
encodes precisely the data of two adjacent bicartesian squares and a choice of composite. By \cite[Lemma 4.4.2.1]{htt} the composite of bicartesian squares is automatically a bicartesian square. 

The bottom map of the square \eqref{PullbackStableNerveSegal} is a weak equivalence, as it can be identified with the Segal map 
\[ \Gamma(\cQ^{[\inda]})_2\xrightarrow{\simeq}\Gamma(\cQ^{[\inda]})_1\ttimes{\Gamma(\cQ^{[\inda]})_0}\Gamma(\cQ^{[\inda]})_1 \]
of the simplicial space $\Gamma(\cQ^{[\inda]})$ from \cref{gammadefn}, which is a Reedy fibrant Segal space by \cref{jtprop4.10}.  Moreover, for any $q$ the inclusion
\[ (N^s\cQ)_{q,\indb}\longrightarrow J(\cQ^{[q]\times[\indb]}) \]
is a Kan fibration, as it is the inclusion of a connected component. By iterating Reedy's Lemma (the dual to \cite[Lemma 7.2.15]{Hirschhorn}), 
we obtain that the right vertical map of \eqref{PullbackStableNerveSegal} is a Kan fibration.  Since Quillen's model structure on simplicial sets is right proper, it follows that the map \eqref{stable Segal map}
is a weak equivalence. We conclude that $N^s \cQ$ is double Segal.
\end{proof}

\begin{prop}
The preaugmented bisimplicial space $N^s\cQ$ is stable.
\end{prop}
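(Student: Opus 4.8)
The plan is to follow the template of the exact-category argument in \cref{exactnervestable}, replacing the explicit functorial choice of pullbacks by the corresponding universal property of (co)cartesian squares in $\cQ$. By \cref{babystability} it suffices to prove that the two maps
\[ (N^s\cQ)_{1,1} \longrightarrow (N^s\cQ)_{0,1} \htimes{(N^s\cQ)_{0,0}} (N^s\cQ)_{1,0} \quad\text{and}\quad (N^s\cQ)_{1,1} \longrightarrow (N^s\cQ)_{1,0} \htimes{(N^s\cQ)_{0,0}} (N^s\cQ)_{0,1}, \]
induced respectively by the target maps $\targethord,\targetverd$ and the source maps $\sourcehord,\sourceverd$, are weak equivalences. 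I will treat the first map, which records the cospan inside a square; the second is entirely dual.

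First I would use \cref{lemmaKanfibration}, together with its analogue in the other simplicial direction, to reduce to the strict pullback: since the relevant evaluation maps $(N^s\cQ)_{0,1}\to(N^s\cQ)_{0,0}$ and $(N^s\cQ)_{1,0}\to(N^s\cQ)_{0,0}$ are Kan fibrations between Kan complexes, the strict pullback $(N^s\cQ)_{0,1}\ttimes{(N^s\cQ)_{0,0}}(N^s\cQ)_{1,0}$ is a model for the homotopy pullback. Unwinding the definitions identifies this strict pullback with the maximal Kan complex of cospans $J(\cQ^{\Lambda^2_2})$ in $\cQ$, and under this identification the map in question sends a bicartesian square $F\colon[1]\times[1]\to\cQ$ to the cospan $F(1,0)\to F(1,1)\leftarrow F(0,1)$ obtained by restricting along the two target edges.

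It then remains to show that this restriction is a weak equivalence. Since $\cQ$ is stable it admits all pullbacks, so \cite[Proposition 4.3.2.15]{htt} applies: the full subquasi-category of $\cQ^{[1]\times[1]}$ spanned by the cartesian squares, namely the right Kan extensions of cospans along $\Lambda^2_2\hookrightarrow[1]\times[1]$, restricts to $\cQ^{\Lambda^2_2}$ by a trivial fibration. Passing to maximal Kan complexes preserves this weak equivalence, and the key point supplied by stability is that in $\cQ$ a square is cartesian if and only if it is bicartesian, so that the space of cartesian squares is exactly $(N^s\cQ)_{1,1}$. The span map is handled dually, using that cocartesian squares are bicartesian and replacing right Kan extension along $\Lambda^2_2$ by left Kan extension along $\Lambda^2_0\hookrightarrow[1]\times[1]$. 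Combining the two resulting weak equivalences with \cref{babystability} then yields that $N^s\cQ$ is stable. The main obstacle I anticipate is the careful identification of the strict pullback and of $(N^s\cQ)_{1,1}$ with the quasi-categorical diagram spaces to which \cite[Proposition 4.3.2.15]{htt} applies; stability of $\cQ$ is precisely what lets the same square play the role of both a pullback (for the cospan map) and a pushout (for the span map).
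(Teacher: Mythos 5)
Your proposal is correct and follows essentially the same route as the paper's proof: reduce the homotopy pullback to a strict one via \cref{lemmaKanfibration}, identify the stability map with the restriction from (bi)cartesian squares in $\cQ^{[1]\times[1]}$ to cospans, apply \cite[Proposition 4.3.2.15]{htt} to get a Joyal equivalence (indeed a trivial fibration), and conclude by applying $J$, which preserves Joyal equivalences and pullbacks (\cref{jtprop1.16}), with the dual argument for the span map. Your explicit remarks that stability makes cartesian squares coincide with bicartesian ones and that the second projection leg requires the other-direction analogue of \cref{lemmaKanfibration} are points the paper leaves implicit, and the only blemish is a harmless bookkeeping swap in which of the two stability maps you attribute to the source versus the target inclusions.
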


\begin{proof}
By \cref{lemmaKanfibration}, showing that the stability map \eqref{eq:babystabilitycospan} is a weak equivalence is equivalent to showing that the map
\begin{equation}\label{eqn stability of Ns}
(N^s \cQ)_{1,1} \longrightarrow (N^s \cQ)_{1,0} \ttimes{(N^s \cQ)_{0,0}} (N^s \cQ)_{0,1}
\end{equation}
induced by $(\targetverd, \targethord)$ is a weak equivalence. We identify this map using the full sub-quasi-category of squares of $\cQ$ spanned by those that are bicartesian which we denote by 
\[ \widetilde{\cQ^{[1]\times[1]}}\subset \cQ^{[1]\times[1]}. \]
By \cite[Proposition 4.3.2.15]{htt} the restriction of the map
\begin{equation}
\label{mapstablebeforeJ}
\widetilde{\cQ^{[1]\times[1]}}\hookrightarrow \cQ^{[1]\times[1]}\longrightarrow \cQ^{[1]}\ttimes{\cQ^{[0]}}\cQ^{[1]},
\end{equation}
sending a square to the cospan that it contains, is a Joyal equivalence.
By \cref{jtprop1.16}, the functor $J$ commutes with pullbacks and sends Joyal equivalences of quasi-categories to weak equivalences of Kan complexes. Therefore the map
\[ J(\widetilde{\cQ^{[1]\times[1]}})\longrightarrow J(\cQ^{[1]}\ttimes{\cQ^{[0]}}\cQ^{[1]})\cong J(\cQ^{[1]})\ttimes{J(\cQ^{[0]})}J(\cQ^{[1]}), \]
obtained by applying $J$ to the composite (\ref{mapstablebeforeJ}) is a weak equivalence. Observe that this map is precisely the map \eqref{eqn stability of Ns}. The proof for the other stability map \eqref{eq:babystabilityspan} is similar.
\end{proof}

\begin{prop} \label{stablenerveaugmented}
The double Segal space $N^s\cQ$ is augmented.
\end{prop}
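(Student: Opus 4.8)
The plan is to verify the two conditions of \cref{babyaugmentation} separately but dually, reducing each to a statement about a single full sub-quasi-category of $\cQ^{[1]}$. I focus on the first composite \eqref{eqn: augmented 1A}; the second \eqref{eqn: augmented 2A} is entirely analogous, with the roles of source and target, and of terminal and initial objects, interchanged. First I would make the homotopy pullback computable. Since a $(1,0)$-grid has no squares, the bicartesian condition is vacuous and $(N^s\cQ)_{1,0}=J(\cQ^{[1]})$, while $(N^s\cQ)_{0,0}=J(\cQ)$ and $(N^s\cQ)_{-1}=\widetilde{Z}$, where $\widetilde{Z}\subset\cQ$ denotes the full sub-quasi-category spanned by the zero objects (a contractible Kan complex, its morphisms being maps between terminal objects). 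By \cref{lemmaKanfibration0} the strict pullback over the target map $t_v=J(\mathrm{ev}_1)$ already computes the homotopy pullback, and by \cref{jtprop1.16} the functor $J$ commutes with pullbacks. Writing $\widetilde{\cQ^{[1]}}:=\mathrm{ev}_1^{-1}(\widetilde{Z})\subset\cQ^{[1]}$ for the full sub-quasi-category of arrows whose target is a zero object, this yields a natural identification
\[ (N^s\cQ)_{1,0}\ttimes{(N^s\cQ)_{0,0}}(N^s\cQ)_{-1}\cong J\bigl(\widetilde{\cQ^{[1]}}\bigr), \]
under which the composite \eqref{eqn: augmented 1A} becomes $J$ applied to the source functor $\mathrm{ev}_0\colon\widetilde{\cQ^{[1]}}\to\cQ$.

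The heart of the argument, which I expect to be the \textbf{main obstacle}, is then to show that $\mathrm{ev}_0\colon\widetilde{\cQ^{[1]}}\to\cQ$ is a Joyal equivalence. Conceptually this holds because a zero object is terminal, so an arrow with zero target is determined, up to a contractible space of choices, by its source; this is the quasi-categorical incarnation of the inverse equivalence ``$c_{00}\mapsto(c_{00}\to 0)$'' used in the exact case. Concretely I would fix a zero object $0$ and build a homotopy inverse $\iota\colon\cQ\to\widetilde{\cQ^{[1]}}$ from the natural transformation $\id_{\cQ}\Rightarrow\mathrm{const}_0$ supplied by terminality of $0$; then $\mathrm{ev}_0\circ\iota=\id_{\cQ}$, and the essential uniqueness of maps into terminal objects gives a natural equivalence $\iota\circ\mathrm{ev}_0\simeq\id_{\widetilde{\cQ^{[1]}}}$. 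Alternatively I would reduce to a single zero object, using that $\widetilde{Z}$ is contractible to see $\widetilde{\cQ^{[1]}}\simeq\cQ_{/0}$, and then invoke the standard fact that $\cQ_{/0}\to\cQ$ is a trivial fibration when $0$ is terminal \cite[\S 1.2.12]{htt} (it is a right fibration whose fibers are the contractible mapping spaces $\Map_{\cQ}(a,0)$). Either way, the delicate point is precisely to account for the whole contractible space of zero objects rather than a single one, which is where the terminal-object property and the contractibility of $\widetilde{Z}$ enter.

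Once $\mathrm{ev}_0$ is known to be a Joyal equivalence, the rest is routine: by \cref{jtprop1.16} the functor $J$ carries it to a weak equivalence of Kan complexes, so the composite \eqref{eqn: augmented 1A} is a weak equivalence. Running the dual argument on the full sub-quasi-category of arrows whose \emph{source} is a zero object, paired with the target functor $\mathrm{ev}_1$ and the initiality of zero objects, shows that \eqref{eqn: augmented 2A} is a weak equivalence as well. Combining both with \cref{babyaugmentation}, together with the double Segal property established in \cref{stablenervedoubleSegal}, then gives that $N^s\cQ$ is augmented.
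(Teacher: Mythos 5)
Your proposal is correct and takes essentially the same route as the paper: the paper's proof fixes a zero object $z$ (using that $z\colon\Delta[0]\to(N^s\cQ)_{-1}$ is a weak equivalence, together with \cref{lemmaKanfibration0}), identifies the resulting strict pullback with $J$ of the fat slice $\cQ_{//z}$, and invokes \cite[\S 10.3]{Joyal} that the source projection $\cQ_{//z}\to\cQ$ is a Joyal equivalence --- which is exactly your slice argument, the only difference being that you carry the whole contractible complex of zero objects along and reduce to a single one afterwards (or avoid it via a homotopy inverse), citing \cite[\S 1.2.12]{htt} instead. Both arguments hinge on the same key fact, the slice-projection equivalence over a terminal object, transported along $J$ via \cref{jtprop1.16}.
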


In the proof we make use of Joyal's fat slice construction from \cite[\S 10]{Joyal} or \cite[\S 2.4]{RiehlVerity2Cat}. Roughly speaking, given a quasi-category $\cQ$ and a $0$-simplex $x$, the fat slice $\cQ_{//x}$ over $x$, which is given by
\[ \cQ_{//x}:=\cQ^{[1]}\,{}^t\!\ttimes{\cQ}^x\Delta[0], \]
models the quasi-category of co-cones in $\cQ$ with vertex $x$, generalizing the classical notion of slice in an ordinary category.

\begin{proof} 
Fix a zero object $z$ of $\cQ$. Then the map $z\colon \Delta[0] \to (N^s \cQ)_{-1}$ is a weak equivalence.  This fact, together with \cref{lemmaKanfibration0}, implies that showing that the augmentation map \eqref{eqn: augmented 1A} is a weak equivalence is equivalent to showing that the map
\begin{equation}\label{eqn augmentation of Ns}
(N^s \cQ)_{1,0} \ttimes{(N^s \cQ)_{0,0}} \Delta[0] \longrightarrow (N^s \cQ)_{1,0} \longrightarrow (N^s \cQ)_{0,0}
\end{equation}
induced by the projection is a weak equivalence.  To establish this weak equivalence, consider the fat slice $\cQ_{//z}$. Note that there is a canonical identification of $J(\cQ_{//z})$ with the source of \eqref{eqn augmentation of Ns}, which can be seen by unpacking the construction of the fat slice $\cQ_{//z}$. By \cite[\S 10.3]{Joyal}, the source map
\begin{equation*}
s\colon\cQ_{//z}\longrightarrow \cQ^{[1]}\longrightarrow \cQ,
\end{equation*}
is a Joyal equivalence.  Hence, its image under $J$, which is precisely \eqref{eqn augmentation of Ns}, is a weak equivalence. The argument for the analogous map \eqref{eqn: augmented 2A} is similar.
\end{proof}

\subsection{Comparing $\sdot$-constructions for stable quasi-categories}

We now show that applying our generalized $\sdot$-construction to the stable nerve recovers the $\sdot$-construction for stable quasi-categories as considered by Lurie \cite[Remark 1.2.2.5]{LurieHA}, Blumberg, Gepner, and Tabuada \cite[Definition 7.1]{BGT}, and Barwick \cite{barwickq,barwickKtheory}. 

\begin{defn}
The \emph{Waldhausen construction} of a stable quasi-category $\cQ$ is the simplicial object $\sdots(\cQ)$ in simplicial sets defined as follows.  The $n$-th component $\sdotsn{n}(\cQ)$ is the sub-simplicial set of $J(\cQ^{\operatorname{Ar}[n]})$ spanned by diagrams of shape $\operatorname{Ar}[n]$ in $\cQ$ in which all squares are bicartesian, and any element of the diagonal of $\operatorname{Ar}[n]$ is mapped to a zero object of $\cQ$.  The simplicial structure is induced by the cosimplicial structure of the collection of categories $[n]$ for $n\geq 0$.
\end{defn}

We show compatibility of $\sdot$-constructions via the following result.

\begin{thm} \label{comparisonsdotstable}
Let $\cQ$ be a stable quasi-category. There is a levelwise acyclic Kan fibration
\[ \sdots(\cQ)\xrightarrow{\simeq}\sdot(N^s\cQ). \]
\end{thm}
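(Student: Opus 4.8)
The plan is to run the same strategy as in the proof of \cref{comparisonsdotexact}, with one essential difference: the input $\cQ$ is now a genuine quasi-category rather than the nerve of an ordinary category. Consequently, the comparison at the level of \emph{all} diagrams will be a trivial fibration (acyclic Kan fibration) rather than an isomorphism, and this is exactly what upgrades the levelwise isomorphism of the exact case to the levelwise acyclic Kan fibration in the statement. Throughout I use the identification $\sdotn{n}(N^s\cQ)\cong\Map_{\sasset}(\wW{n}, N^s\cQ)$ from \cref{nerveofWn}.

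First I would set up the comparison at the level of all diagrams. The natural acyclic cofibration $L\wW{n}\to N\operatorname{Ar}[n]$ of \cref{midanodyne}(b) is a monomorphism and a Joyal equivalence; since $\cQ$ is a quasi-category and the Joyal model structure is Cartesian closed, mapping into $\cQ$ yields a trivial fibration $\cQ^{\operatorname{Ar}[n]}=\Map_{\sset}(N\operatorname{Ar}[n],\cQ)\to\Map_{\sset}(L\wW{n},\cQ)$, which via the adjunction $L\dashv T$ of \cref{midanodyne}(a) I identify with a trivial fibration
\[ \Phi\colon \cQ^{\operatorname{Ar}[n]}\longrightarrow \Map_{\sasset}(\wW{n}, T\cQ). \]
Here $T\cQ$ is, by the computation in \cref{TNC} applied with $\cQ$ in place of $N\cD$, the preaugmented bisimplicial space with $(T\cQ)_{\inda,\indb}=\cQ^{[\inda]\times[\indb]}$ and $(T\cQ)_{-1}=\cQ$, so both source and target of $\Phi$ are quasi-categories. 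Applying the core functor $J$ and using that it sends Joyal equivalences to weak equivalences (\cref{jtprop1.16}) and, by a short lifting argument using that a trivial fibration is a categorical equivalence and hence reflects equivalences, preserves trivial fibrations, I obtain a trivial fibration of Kan complexes $J\Phi\colon J(\cQ^{\operatorname{Ar}[n]})\to J(\Map_{\sasset}(\wW{n}, T\cQ))$.

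Next I would identify the two $\sdot$-constructions as full sub-Kan-complexes, each a union of connected components, of the source and target of $J\Phi$. By definition $\sdotsn{n}(\cQ)$ is the sub-Kan-complex of $J(\cQ^{\operatorname{Ar}[n]})$ on those $\operatorname{Ar}[n]$-diagrams whose squares are bicartesian and whose diagonal lands in the zero objects; as these conditions are invariant under equivalence, it is a union of components. On the other side, the stable nerve is obtained levelwise from $T\cQ$ by passing to the core and keeping only the components of bicartesian grids (in bidegree $(\inda,\indb)$) and of zero objects (in the augmentation), so $\Map_{\sasset}(\wW{n}, N^s\cQ)=\sdotn{n}(N^s\cQ)$ is, since $\wW{n}$ is discrete, the union of components of $J(\Map_{\sasset}(\wW{n}, T\cQ))$ cut out by the bicartesian-grid and zero-diagonal conditions. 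The crucial bookkeeping is that under the vertex-level correspondence $\tau_1(L\wW{n})\cong\operatorname{Ar}[n]$ of \cref{midanodyne}(c), the squares of a $\wW{n}$-diagram match the squares of the corresponding $\operatorname{Ar}[n]$-diagram and the augmentation of $\wW{n}$ matches the diagonal of $\operatorname{Ar}[n]$; hence $J\Phi$ carries the vertices defining $\sdotsn{n}(\cQ)$ exactly onto those defining $\sdotn{n}(N^s\cQ)$.

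Finally I would conclude by restriction. Since $J\Phi$ is a weak equivalence it is a $\pi_0$-bijection, so the component-union upstairs is precisely the preimage of the component-union downstairs; as trivial fibrations are stable under pullback along the inclusion of a union of components, the restriction of $J\Phi$ is again a trivial fibration
\[ \sdotsn{n}(\cQ)\xrightarrow{\simeq}\sdotn{n}(N^s\cQ). \]
Every construction here is natural in $[n]$ — the acyclic cofibration of \cref{midanodyne}(b) is natural, and $J$, $\Map$, and passage to components are functorial — so these assemble into the desired levelwise acyclic Kan fibration $\sdots(\cQ)\to\sdot(N^s\cQ)$. I expect the main obstacle to be the second step: verifying that $\Map_{\sasset}(\wW{n}, N^s\cQ)$ is genuinely a union of components of $J(\Map_{\sasset}(\wW{n}, T\cQ))$ cut out by exactly the bicartesian-and-zero-diagonal conditions, which requires commuting the core functor and the restriction to good components past the (discrete) limit computing $\Map_{\sasset}(\wW{n},-)$, together with the translation of conditions through \cref{midanodyne}(c).
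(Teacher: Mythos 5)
Your proof is correct and follows essentially the same route as the paper: the natural acyclic cofibration $L\wW{n}\to N\operatorname{Ar}[n]$ of \cref{midanodyne}(b), together with the cartesianness of the Joyal model structure and the adjunction $L\dashv T$, gives the trivial fibration $\Map_{\sset}(N\operatorname{Ar}[n],\cQ)\to\Map_{\sasset}(\wW{n},T\cQ)$, which is then restricted to the two $\sdot$-constructions and remains a trivial fibration by pullback stability. Your detour through $J$ and the union-of-components/$\pi_0$ bookkeeping is just a more explicit (and arguably more careful) rendering of the paper's terse assertion that the square comparing $\sdotsn{n}(\cQ)\subset\Map_{\sset}(N\operatorname{Ar}[n],\cQ)$ with $\sdotn{n}(N^s\cQ)\subset\Map_{\sasset}(\wW{n},T\cQ)$ is a pullback.
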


The proof follows the same line of reasoning as \cref{comparisonsdotexact} but in a homotopical context.

\begin{proof}
The acyclic cofibration from \cref{midanodyne}(b) induces an acyclic fibration between quasi-categories
\[ \Map_{\sset}(N\operatorname{Ar}[n], \cQ)\xrightarrow{\simeq} \Map_{\sset}(L\wW{n}, \cQ)\cong\Map_{\sasset}(\wW{n}, T\cQ). \]

Given that the sets $\sdotsn{n}(\cQ)_0$ and $\sdotn{n}(N^s\cQ)_0$ are in canonical bijection, we see that this acyclic fibration restricts along the inclusions
\[ \sdotn n{\cQ}\subset \Map_{\sset}(\operatorname{Ar}[n],\cQ)\text{ and }\sdotn n(N^s(\cQ))\subset\Map_{\sasset}(\wW{n}, T\cQ). \]
The resulting commutative square
\[ \begin{tikzcd}
\sdotsn{n}(\cQ) \arrow[d, hook] \arrow[r] & \sdotn{n}(N^e\cQ) \arrow[d, hook]\\
\Map_{\sset}(N\operatorname{Ar}[n],\cQ) \arrow[r, "\simeq"]& \Map_{\sasset}(\wW{n}, T\cQ)
\end{tikzcd} \]
is a pullback, and in particular we obtain the desired acyclic fibration
\[ \sdotsn{n}(\cQ)\stackrel{\simeq}{\longrightarrow}\sdotn{n}(N^s\cQ). \qedhere \]
\end{proof}

\section{The $\sdot$-construction for proto-exact $(\infty,1)$-categories}
\label{exactinfinitycategories}

Barwick \cite{barwickq,barwickKtheory} and Dyckerhoff and Kapranov \cite{DK} introduce \emph{exact quasi-categories} and \emph{proto-exact quasi-categories}, notions which generalize both the definitions of exact categories and stable quasi-categories as considered in \cref{exactcategories,,stableinfinitycategories}. The notion of proto-exact quasi-category is general enough to include interesting examples that do not fit into the previous framework, such as sub-quasi-categories of a stable quasi-category which are closed under extensions, as considered in \cite{barwickKtheory}.

Here, we follow the definition of Dyckerhoff and Kapranov \cite[Definition 7.2.1]{DK} of an exact $\infty$-category, although we use the wording ``proto-exact'' rather than ``exact'' to distinguish from exact $\infty$-categories in the sense of Barwick \cite{barwickq}, which are in particular required to be additive.

Observe the similarity of terminology with exact categories, but also the use of the framework of quasi-categories.

\begin{defn} 
A \emph{proto-exact quasi-category} consists of a triple of quasi-categories $(\exactcat,\cM,\cE)$ such that:
\begin{itemize}
\item both $\cM$ and $\cE$ are sub-quasi-categories of $\exactcat$ containing all equivalences; 

\item the quasi-category $\exactcat$ has a zero object, $\cM$ contains all the morphisms whose source is a zero object, and $\cE$ contains all the morphisms whose target is a zero object; 

\item any pushout of a morphism in $\cM$ along a morphism of $\cE$ exists and belongs to $\cM$ and any pushout of a morphism in $\cE$ along a morphism of $\cM$ is in $\cE$; similarly, any pullback of a morphism in $\cM$ along a morphism of $\cE$ exists and belongs to $\cM$ and any pullback of a morphism in $\cE$ along a morphism of $\cM$ is in $\cE$; and

\item a square whose horizontal morphisms are in $\cM$ and whose vertical morphisms are in $\cE$ is cartesian if and only if it is cocartesian.
\end{itemize}
Using a suitable notion of exact functors which preserves this structure,
we obtain a category $\pexcat$ of proto-exact quasi-categories.
\end{defn}

In analogy with exact categories, we call the morphisms in $\cM$ and $\cE$ \emph{admissible monomorphisms} and \emph{admissible epimorphisms}, respectively. 

\begin{rmk}
It is likely that this definition could be extended to include the slightly more general {\em augmented} proto-exact $(\infty,1)$-categories of Penney \cite{PenneyHall}, which would produce an example of a stable augmented double Segal space with non-trivial augmentation.
\end{rmk}

As in the previous two sections, we begin by defining a suitable nerve functor for proto-exact quasi-categories.

\begin{defn} \label{nerveofexactinfinitycategories}
The \emph{proto-exact nerve} $N^{pe}\exactcat$ of a proto-exact quasi-category $\exactcat=(\exactcat,\cM,\cE)$, is the preaugmented bisimplicial space $N^{pe}\exactcat\colon \Sigma^{\op}\to \sset$ defined as follows.
\begin{enumerate}
\item The space in degree $(\inda,\indb)$ is the simplicial set
\[ (N^{pe}\exactcat)_{\inda,\indb}\subset J(\exactcat^{[\inda] \times [\indb]}). \]
spanned by $(\inda\times \indb)$-grids in $\exactcat$ with horizontal morphisms in $\cM$, the vertical morphisms in $\cE$, and all squares bicartesian. 

\item The augmentation space is the simplicial set
\[ (N^{pe}\exactcat)_{-1}\subset J(\exactcat) \]
spanned by all the zero objects of $\exactcat$.
\end{enumerate}
The bisimplicial structure is induced by the bi-cosimplicial structure on the collection of categories $[\inda]\times[\indb]$ for all $\inda, \indb \geq 0$. The additional map $(N^{pe}\exactcat)_{-1}\to (N^{pe} \exactcat)_{0,0}$ is the canonical inclusion of the space of zero objects into $\exactcat$. 

The proto-exact nerve defines a functor
\[ N^{pe}\colon\pexcat\longrightarrow \sasset. \]
\end{defn}

For the remainder of this section, assume that $\exactcat=(\exactcat,\cM,\cE)$ is a proto-exact quasi-category.

With a variant of the arguments from \cref{exactcategories,stableinfinitycategories}, one can prove that the proto-exact nerve factors through the subcategory of stable augmented double Segal spaces.

\begin{thm} \label{thm nerve 3}
The proto-exact nerve $N^{pe}(\exactcat)$ is an augmented stable double Segal space. 
\end{thm}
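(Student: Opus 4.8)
The plan is to follow the proof of \cref{thm nerve 2} from \cref{stableinfinitycategories} essentially verbatim, inserting at each stage the bookkeeping needed to track the two distinguished classes $\cM$ and $\cE$. First I would establish the exact analogues of \cref{lemmaKanfibration0,lemmaKanfibration} for $N^{pe}\exactcat$: the augmentation map $(N^{pe}\exactcat)_{-1}\to(N^{pe}\exactcat)_{0,0}$ is again the inclusion of a connected component, all zero objects of $\exactcat$ being equivalent, hence a Kan fibration; and the face maps $(N^{pe}\exactcat)_{\inda',\indb}\to(N^{pe}\exactcat)_{\inda,\indb}$ associated to an injective $\theta\colon[\inda]\to[\inda']$ are Kan fibrations between Kan complexes, by the same Reedy-fibrancy argument. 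As explained in \cref{model2pullback}, these two facts let me replace every homotopy pullback appearing in the double Segal, stability, and augmentation conditions by a strict pullback.

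For the double Segal condition I would reproduce the proof of \cref{stablenervedoubleSegal}: the relevant Segal map fits into a strict pullback square as in \eqref{PullbackStableNerveSegal}, whose bottom edge is a Segal map of a Reedy-fibrant Segal space and whose vertical legs are Kan fibrations, so right properness yields the weak equivalence. The only additional input over the stable case is that the gluing of two adjacent bicartesian squares along a shared edge again produces a bicartesian square, by \cite[Lemma 4.4.2.1]{htt}, together with the observation that the composite spine edges remain in the prescribed class ($\cM$ for the horizontal direction, $\cE$ for the vertical one); this holds because $\cM$ and $\cE$ are sub-quasi-categories of $\exactcat$ and hence closed under composition. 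For augmentation I would reuse the fat-slice argument of \cref{stablenerveaugmented}, restricting the slice construction to the appropriate class: the source map from the relevant fat slice onto $\exactcat$ is a Joyal equivalence by \cite[\S 10.3]{Joyal}, and applying $J$ via \cref{jtprop1.16} gives the weak equivalences required by \cref{babyaugmentation}, where one uses $\cE$-morphisms with target a fixed zero object in one simplicial direction and $\cM$-morphisms with source a fixed zero object in the other.

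The main obstacle is the stability condition, which is where the proto-exact axioms genuinely enter. In the proof for stable quasi-categories one invokes \cite[Proposition 4.3.2.15]{htt} to identify the full sub-quasi-category of bicartesian squares with the quasi-category of cospans onto which it restricts. Here I instead need the \emph{relative} version: the full sub-quasi-category of $\exactcat^{[1]\times[1]}$ consisting of bicartesian squares whose horizontal edges lie in $\cM$ and whose vertical edges lie in $\cE$ should restrict, via the map sending a square to its cospan, to a Joyal equivalence onto the quasi-category of cospans $c_{10}\to c_{11}\leftarrow c_{01}$ with $c_{10}\to c_{11}$ in $\cM$ and $c_{01}\to c_{11}$ in $\cE$ (and dually for spans). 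The defining axioms of a proto-exact quasi-category supply precisely what makes this restriction a trivial fibration: pullbacks of a morphism in $\cM$ along one in $\cE$ exist, lie again in $\cM$, and, by the cartesian $\Leftrightarrow$ cocartesian axiom, complete the given cospan to a bicartesian square, so that the fibers of the restriction are contractible.

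Assembling this relative filler statement out of \cite[Proposition 4.3.2.15]{htt} and the proto-exact axioms, and then applying $J$ and \cref{jtprop1.16} to pass to the associated Kan complexes, is the crux of the argument. Once it is in place, the restriction map induces the weak equivalences required by \cref{babystability} exactly as in the stable case, and combining the three verifications shows that $N^{pe}(\exactcat)$ is an augmented stable double Segal space.
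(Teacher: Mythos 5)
Your proposal is correct and follows essentially the same route as the paper's own (outline of a) proof: reduce homotopy pullbacks to strict ones via the connected-component and Reedy-fibrancy Kan fibration lemmas, verify the double Segal condition by pulling back the Segal maps of $\Gamma(\exactcat^{[\inda]})$ using pasting of bicartesian squares and closure of $\cM$, $\cE$ under composition, prove stability via a restricted form of \cite[Proposition 4.3.2.15]{htt} whose fibers are controlled exactly by the proto-exact pullback/pushout axioms together with the cartesian-iff-cocartesian axiom, and prove augmentation by the fat-slice argument using that zero objects are terminal in $\cE$ (dually, initial in $\cM$). The only caveats are cosmetic and at the same level of detail as the paper's outline: the fat slice should be formed inside $\cE$ (with $J(\cE)=J(\exactcat)$ since $\cE$ contains all equivalences), and each single stability map needs both closure statements (pullbacks of $\cM$ along $\cE$ stay in $\cM$ \emph{and} pullbacks of $\cE$ along $\cM$ stay in $\cE$) so that all four edges of the completed square are admissible.
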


\begin{proof}[Outline of the proof]
The proof is a generalization of the one given for stable quasi-categories. Once again, the augmentation of $N^{pe}\exactcat$ is given by an inclusion of a connected component, and an argument similar to \cref{jtprop4.10} shows that $N^{pe}\exactcat$ is Reedy fibrant. We can thus conclude, as in \cref{lemmaKanfibration,lemmaKanfibration0}, that all homotopy pullbacks used to define the double Segal, augmentation, and stability conditions can be modeled by honest pullbacks. 

For an analogue of \cref{stablenervedoubleSegal}, in addition to the fact that bicartesian squares can be pasted in both directions \cite[Lemma 4.4.2.1]{htt}, we need the fact that both admissible monomorphisms and admissible epimorphisms are closed under composition by definition of proto-exact quasi-categories. This property shows that the Segal map 
\[ (N^{pe} \exactcat)_{\inda,\indb} \longrightarrow (N^{pe}\exactcat)_{\inda,\indb-i} \ttimes{(N^{pe} \exactcat)_{\inda,0}} (N^{pe} \exactcat)_{\inda,i},\] 
is the pullback of the Segal map
\[ \Gamma(\exactcat^{[\inda]})_\indb\xrightarrow{\simeq}\Gamma(\exactcat^{[\inda]})_{\indb-i}\ttimes{\Gamma(\exactcat^{[\inda]})_0}\Gamma(\exactcat^{[\inda]})_{i} \]
along a fibration, and therefore a weak equivalence.  

Similarly, both stability maps, for example
\[ (N^{pe} \exactcat)_{1,1} \longrightarrow (N^{pe}\exactcat)_{1,0} \ttimes{(N^{pe} \exactcat)_{0,0}} (N^{pe} \exactcat)_{0,1} \cong J(\cE^{[1]})\ttimes{J(\exactcat)}J(\cM^{[1]}), \]
are pullbacks of the map analogous to the one appearing in \eqref{mapstablebeforeJ}, restricting to diagrams with the necessary requirements on admissible monomorphisms and epimorphisms.  We use in particular the fact that morphisms in $\cE$ are closed under pullback along admissible monomorphisms and morphisms in $\cM$ are closed under pushout along admissible epimorphisms. 

Finally, the proof of the augmentation condition can be adapted from the proof of \cref{stablenerveaugmented}, using that every object of the augmentation is final in~$\cE$.
\end{proof}

The $\sdot$-construction for exact quasi-categories was considered by Barwick in \cite{barwickq,barwickKtheory}, and Dyckerhoff and Kapranov give a definition in the slightly more general context of proto-exact quasi-categories \cite{DK}.  We denote it by $\sdotpe(\exactcat)$ \cite[\S 2.4]{DK}. Once again, we have the following comparison result, whose proof is analogous to that of \cref{comparisonsdotstable}.

\begin{thm} \label{comparisonsdotexactinfinity}
If $\exactcat$ is a proto-exact quasi-category, there is a levelwise acyclic Kan fibration
\[ \sdotpe(\exactcat)\stackrel{\simeq}{\longrightarrow}\sdot(N^{pe}\exactcat). \]
\end{thm}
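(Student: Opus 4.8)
The plan is to adapt the proof of \cref{comparisonsdotstable} essentially verbatim, since the only difference between the stable and proto-exact settings is the presence of the two distinguished classes $\cM$ and $\cE$ of admissible morphisms. First I would invoke \cref{midanodyne}(b), which provides the natural acyclic cofibration $L\wW{n}\to N\operatorname{Ar}[n]$; mapping into $\exactcat$ and using the simplicial adjunction of \cref{midanodyne}(a) yields an acyclic fibration of quasi-categories
\[ \Map_{\sset}(N\operatorname{Ar}[n], \exactcat)\xrightarrow{\simeq} \Map_{\sset}(L\wW{n}, \exactcat)\cong\Map_{\sasset}(\wW{n}, T\exactcat). \]
The key point is that this map is defined on the full mapping spaces, with no reference to the admissibility conditions or to the bicartesian requirement, so it is available in exactly the same form as in the stable case.

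The next step is to identify the two sub-simplicial sets being compared as the appropriate restrictions of this acyclic fibration. On the source side, $\sdotpe(\exactcat)$ is by definition the sub-simplicial set of $J(\exactcat^{\operatorname{Ar}[n]})$ spanned by those $\operatorname{Ar}[n]$-diagrams whose horizontal morphisms lie in $\cM$, whose vertical morphisms lie in $\cE$, whose squares are all bicartesian, and whose diagonal entries are zero objects. On the target side, unwinding the explicit description of $T\exactcat$ (the proto-exact analogue of \cref{TNC}) together with \cref{nerveofWn} identifies $\sdotn{n}(N^{pe}\exactcat)$ with the sub-simplicial set of $\Map_{\sasset}(\wW{n}, T\exactcat)$ cut out by imposing precisely the same admissibility and bicartesianness conditions, now phrased in the bisimplicial language where horizontal and vertical directions are separated intrinsically. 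I would then check that the vertex sets $\sdotsn{n}(\exactcat)_0$ and $\sdotn{n}(N^{pe}\exactcat)_0$ are in canonical bijection — this is the content of matching up $\operatorname{Ar}[n]$-diagrams with admissibility data against $\wW{n}$-indexed diagrams — so that the acyclic fibration restricts to a commutative square
\[ \begin{tikzcd}
\sdotpe(\exactcat) \arrow[d, hook] \arrow[r] & \sdotn{n}(N^{pe}\exactcat) \arrow[d, hook]\\
\Map_{\sset}(N\operatorname{Ar}[n],\exactcat) \arrow[r, "\simeq"]& \Map_{\sasset}(\wW{n}, T\exactcat).
\end{tikzcd} \]

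Finally I would argue that this square is a pullback. Because the bottom horizontal map is an isomorphism on vertices compatible with the bijection of vertex sets, and because both vertical maps are inclusions of sub-simplicial sets spanned by prescribed vertices, the pullback of $\sdotn{n}(N^{pe}\exactcat)$ along the bottom equivalence recovers exactly $\sdotpe(\exactcat)$. Since acyclic Kan fibrations are stable under pullback, the top map $\sdotpe(\exactcat)\to\sdotn{n}(N^{pe}\exactcat)$ is again an acyclic Kan fibration, and naturality in $n$ gives the desired levelwise statement. The main obstacle, such as it is, lies not in the homotopical formalism but in the bookkeeping of the previous step: verifying that the conditions defining $\sdotpe$ as a subobject of $J(\exactcat^{\operatorname{Ar}[n]})$ correspond termwise, under the identification \eqref{largeridentification} of \cref{comparisonsdotexact}, to the conditions defining $\sdotn{n}(N^{pe}\exactcat)$ as a subobject of $\Map_{\sasset}(\wW{n}, T\exactcat)$. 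Here one must use that \cref{midanodyne} translates between $\operatorname{Ar}[n]$-shaped data and $\wW{n}$-shaped data in a way that respects the horizontal/vertical distinction, so that ``horizontal morphisms in $\cM$, vertical morphisms in $\cE$'' on one side matches the corresponding condition on the other; this is precisely the point flagged in \cref{arnvswn} as the source of subtlety, but it has already been resolved in the course of proving \cref{comparisonsdotexact} and requires only a routine, if careful, repetition.
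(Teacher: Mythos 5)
Your proposal is correct and takes essentially the same approach as the paper, which proves this theorem simply by stating that the argument is analogous to that of \cref{comparisonsdotstable} --- precisely the adaptation you carry out, including the key bookkeeping point that the admissibility conditions ($\cM$ horizontal, $\cE$ vertical, bicartesian squares, zero diagonal) correspond termwise under the identification from \cref{midanodyne}. One small caveat: the bottom map of your square is only surjective, not injective, on vertices (extensions along the weak anodyne map $L\wW{n}\to N\operatorname{Ar}[n]$ into a quasi-category are not unique), but your pullback argument is unharmed, since both vertical maps are inclusions of full subcomplexes and membership is detected on vertices via the canonical correspondence of defining conditions, which you also invoke.
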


\begin{rmk} 
With a variant of \cref{nerveofexactinfinitycategories},  one could define the nerve of any Waldhausen category $\cW$, as introduced in \cite{waldhausen}, as a functor
\[ N^w\cW\colon\Sigma^{\op}\longrightarrow\gpd. \]
Informally, a Waldhausen category possesses the data of admissible monomorphisms which behave nicely under pushout, but does not have the dual notion of admissible epimorphism or the accompanying pullback data.  The preaugmented bisimplicial groupoid $N^w\cW$ can be shown to be double Segal by means of similar techniques to the ones employed in the proof of \cref{exactnervedoubleSegal}. However, it cannot be stable or augmented, because, a priori only half of the stablility and augmentation conditions hold.  For instance, a cospan given by a cofibration and an arbitrary morphism cannot be completed to a cartesian square in a canonical way, corresponding to the fact that the $\sdot$-construction of a Waldhausen category is not $2$-Segal, as discussed in \cite[Remark 7.3.7]{DK}.  As an explicit counterexample, we show in \cite[Example 4.3]{BOORS3} that the $\sdot$-construction of the Waldhausen category of retractive spaces over a point is not a 2-Segal space.
\end{rmk}

\section{The relative $\sdot$-construction}

In this section we show that the relative $\sdot$-construction, which is applied to a map of exact categories, fits into our framework.  Of particular interest is the fact that it provides examples with nontrivial augmentation, highlighting the full generality of that definition.  

Waldhausen's relative $\sdot$-construction, as defined in \cite{waldhausen}, is given in the more general context of Waldhausen categories.  However, we want to focus on contexts which correspond to $2$-Segal spaces. For simplicity we focus on maps between exact categories, but the results in this section apply to stable or proto-exact quasi-categories as well. 

To define the relative $\sdot$-construction let us first recall the initial path space construction of \cite{DK} and \cite{GalvezKockTonks}.   Let us denote by $-\star-\colon\Delta\times\Delta\to\Delta$ the join of the ordinals and $j \colon \Delta^{\op} \to \Delta^{\op}, [n]\mapsto [0] \star [n]$ the join with an initial object.

\begin{defn}
Let $\targetcat$ be a category.  The {\em initial path object} of a simplicial object $X \colon \Delta^{\op} \to \targetcat$ is the simplicial object $P^\triangleleft X$ given by the composite
\[ \Delta^{\op} \xrightarrow{j} \Delta^{\op} \xrightarrow{X} \targetcat. \]
\end{defn}

The initial path space comes with a canonical map induced by the extra face maps $d_0$ levelwise. By abuse of notation, we again denote this map by $d_0 \colon P^\triangleleft X \to X$. 

\begin{defn}
Let $f\colon\cA \to \cB$ be an exact functor between exact categories.
The {\em relative $\sdot$-construction of $f$}, denoted by $\sdotrel(f)$, is the pullback
of simplicial groupoids
\[  \begin{tikzcd}
\sdotrel(f) \arrow{r}\arrow{d} & \sdote (\cA) \arrow[d, "\sdote(f)"] \\
P^\triangleleft \sdote (\cB) \arrow[r, "d_0" swap] & \sdote (\cB).
\end{tikzcd} \]
\end{defn}

\begin{rmk} \label{pathfibration1}
For any exact category $\cB$, the map $d_0 \colon P^\triangleleft\sdote (\cB) \to \sdote (\cB)$ is a levelwise fibration of groupoids. In particular, the pullback defining the relative $\sdot$-construction is also a model for the $2$-pullback.
\end{rmk}

Let us look more closely at this definition. An object in $\sdotreln{n}(f)$
consists of an object $(a_{ij})$ in $\sdoten{n}(\cA)$ together with a sequence of admissible monomorphisms $b_{01}\mono \cdots \mono b_{0,n+1}$ in $\cB$ and admissible epimorphisms $b_{0j}\epi f(a_{1,j})$ which fit into a diagram  
\[ {\scriptsize \begin{tikzcd}[column sep=small]
0 \arrow[mono]{r} & b_{01} \arrow[epi]{d} \arrow[mono]{r} & b_{02} \arrow[mono]{r} \arrow[epi]{d} & b_{03} \arrow[mono]{r} \arrow[epi]{d} & \cdots \arrow[mono]{r} & b_{0n} \arrow[mono]{r} \arrow[epi]{d} & b_{0,n+1} \arrow[epi]{d}\\
& 0 \arrow[mono]{r} & f(a_{12}) \arrow[epi]{d} \arrow[mono]{r} & f(a_{13}) \arrow[mono]{r} \arrow[epi]{d} & \cdots \arrow[mono]{r} & f(a_{1n}) \arrow[mono]{r} \arrow[epi]{d} & f(a_{1,n+1}) \arrow[epi]{d}\\
& & 0 \arrow[mono]{r} & f(a_{23}) \arrow[mono]{r} & \cdots \arrow[mono]{r} & f(a_{2n}) \arrow[mono]{r} \arrow[epi]{d} & f(a_{2,n+1}) \arrow[epi]{d}\\
& & & & & \vdots \arrow[epi]{d} & \vdots \arrow[epi]{d} \\
 & & & & 0 \arrow[mono]{r} & f(a_{n-1,n}) \arrow[epi]{d} \arrow[mono]{r} & f(a_{n-1,n+1}) \arrow[epi]{d}\\
 & & & & & 0 \arrow[mono]{r} & f(a_{n,n+1}) \arrow[epi]{d}\\
 & & & & & & 0 \,,\\
\end{tikzcd} } \]
subject to the conditions that  all squares are bicartesian.

\begin{rmk}
Specializing the relative $\sdot$-construction to the constant functor $\cA\to\{0\}$ at the trivial exact category $\{0\}$, we obtain a levelwise acyclic fibration of groupoids
\[ \sdotrel\left(\cA\to\{0\}\right)\stackrel{\simeq}{\longrightarrow}\sdote(\cA). \]
\end{rmk}

We now show that the relative $\sdot$-construction arises directly from our generalized $\sdot$-construction. We first need a preliminary construction, which will be related to $P^\triangleleft \sdot (\cB)$. 

\begin{const} \label{const path object}
The {\em initial path construction} of a preaugmented bisimplicial object $Y$ in $\targetcat$ is the preaugmented bisimplicial object $P^\triangleleft Y \colon \Sigma^{\op} \to \targetcat$ defined as follows.
\begin{enumerate}
\item The underlying bisimplicial object of $P^\triangleleft Y$ is given by
\[ i^*(P^\triangleleft Y)\colon(\Delta\times\Delta)^{\op} \xrightarrow{j\times\id} (\Delta\times\Delta)^{\op} \xrightarrow{i^*Y} \targetcat; \]
in particular, for $\inda,\indb\geq0$, we have
\[ (P^\triangleleft Y)_{\inda,\indb} = (i^*Y)_{1+\inda, \indb}. \]

\item \label{Ptriangle condiction pullback} 
The augmentation of $P^\triangleleft Y$ is given by the pullback
\[ (P^\triangleleft Y)_{-1} = Y_{1,0} \ttimes{Y_{0,0}} Y_{-1}. \]
\end{enumerate}
The additional map $(P^\triangleleft Y)_{-1} \longrightarrow (P^\triangleleft Y)_{0,0}$ is given by the projection onto the first factor
\[ Y_{1,0} \ttimes{Y_{0,0}} Y_{-1} \to Y_{1,0}. \]
The initial path space defines a functor
\[ P^{\triangleleft}\colon\sasset\longrightarrow\sasset. \]
\end{const}

\begin{rmk}
If $\targetcat$ is a combinatorial model category and $Y$ is injectively fibrant, then the pullback in \eqref{Ptriangle condiction pullback} is also a homotopy pullback and hence homotopically well-defined. If in addition $Y$ is an augmented stable double Segal object, we have that the composite
\[ (P^\triangleleft Y)_{-1} \simeq Y_{1,0} \htimes{Y_{0,0}} Y_{-1} \longrightarrow Y_{0,0} \]
is a weak equivalence. For example, if we take $Y=N^e\cB$, then 
\[ (P^\triangleleft N^e\cB)_{-1} \simeq (N^e\cB)_{0,0} =  \cB_{\text{iso}} \]
as groupoids. In particular, even if the augmentation of $Y$ is trivial, that of $P^\triangleleft Y$ need not be. 
\end{rmk}

\begin{lem} \label{Ptrianglefibrant}
If $Y$ is an injectively fibrant augmented stable double Segal object in a combinatorial model category $\targetcat$, then $P^\triangleleft Y$ is an augmented stable double Segal object in $\targetcat$.
\end{lem}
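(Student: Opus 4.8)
The plan is to verify in turn that $P^\triangleleft Y$ satisfies the double Segal, stability, and augmentation conditions, in each case reducing the relevant homotopy pullback for $P^\triangleleft Y$ to conditions already assumed for $Y$. Throughout I use that, since $Y$ is injectively fibrant, the homotopy pullbacks appearing in \cref{doublesegal}, \cref{babystability}, and \cref{babyaugmentation} may be computed by strict pullbacks, and likewise for $P^\triangleleft Y$. The essential bookkeeping is the following. Under the identification $(P^\triangleleft Y)_{\inda,\indb} = Y_{1+\inda,\indb}$ of \cref{const path object}, the first simplicial direction of $P^\triangleleft Y$ is the d\'ecalage induced by $j$, so its first-direction face maps are the \emph{shifted} faces $d_{i+1}$ of $Y$; in particular the vertical source and target maps $\sourcever,\targetver$ of $P^\triangleleft Y$ are the faces $d_2$ and $d_1$ of $Y$ in the first direction, while the one missing face $d_0$ is exactly the datum recorded by the augmentation $(P^\triangleleft Y)_{-1} = Y_{1,0}\htimes{Y_{0,0}} Y_{-1}$. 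The second simplicial direction is untouched by $j$, so its structure maps agree with those of $Y$ at first level $1+\inda$.

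First I would treat the double Segal condition. In the second direction this is immediate: for fixed $\inda$ the Segal map of $P^\triangleleft Y$ is precisely the second-direction Segal map of $Y$ at level $1+\inda$, an equivalence by hypothesis. In the first direction, $(P^\triangleleft Y)_{\bullet,\indb}$ is the d\'ecalage of the Segal object $Y_{\bullet,\indb}$, and d\'ecalage preserves the Segal condition. Concretely, the first-direction spine of $P^\triangleleft Y$ at level $\inda$ is carried by $j$ to the subspine $\{1<\cdots<1+\inda\}$ of $\Delta[1+\inda]$, and the Segal maps of $Y$ together with the simplicial identities assemble the iterated pullback
\[ Y_{2,\indb}\htimes{Y_{1,\indb}}\cdots\htimes{Y_{1,\indb}}Y_{2,\indb} \]
into $Y_{1,\indb}\htimes{Y_{0,\indb}}\cdots\htimes{Y_{0,\indb}}Y_{1,\indb}\simeq Y_{1+\inda,\indb}$, identifying the first-direction Segal map of $P^\triangleleft Y$ with a genuine Segal equivalence of $Y$.

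Next I would verify stability. After the shift, the cospan stability map \eqref{eq:babystabilitycospan} for $P^\triangleleft Y$ becomes a map
\[ Y_{2,1} \longrightarrow Y_{2,0}\htimes{Y_{1,0}} Y_{1,1}, \]
where $Y_{2,1}\to Y_{2,0}$ is $\targethor$, the map $Y_{2,1}\to Y_{1,1}$ is the interior first-direction face $d_1$, and $Y_{2,0}\to Y_{1,0}$ is again $d_1$ (the composite of the right column). The strategy is to rewrite the target using the stability of $Y$ itself: applying the baby cospan equivalence of $Y$ to the factor $Y_{1,1}\simeq Y_{1,0}\htimes{Y_{0,0}}Y_{0,1}$ and cancelling the shared $Y_{1,0}$ identifies the target with the full cospan $Y_{2,0}\htimes{Y_{0,0}}Y_{0,1}$, where now $Y_{2,0}\to Y_{0,0}$ is the total vertical target $\targetver$. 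But by the adult form of stability recorded in \cref{adultstability}, applied with $\inda=2$ and $\indb=1$, the natural map $Y_{2,1}\to Y_{2,0}\htimes{Y_{0,0}}Y_{0,1}$ is an equivalence, and one checks it agrees with the composite just described. Hence the cospan stability map for $P^\triangleleft Y$ is an equivalence; the span stability map \eqref{eq:babystabilityspan} is handled symmetrically, using adult span stability at $(2,1)$ together with baby span stability.

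Finally I would check that $P^\triangleleft Y$ is augmented, which is the most delicate point since $(P^\triangleleft Y)_{-1}$ mixes the vertical and augmentation data of $Y$. For the vertical condition \eqref{eqn: augmented 1A}, unwinding the pullbacks shows that the source of the map is the space of length-two vertical chains of $Y$ whose bottom vertex is a zero object; using the first-direction Segal equivalence $Y_{2,0}\simeq Y_{1,0}\htimes{Y_{0,0}}Y_{1,0}$ to split off the first arrow, this becomes $Y_{1,0}\htimes{Y_{0,0}}\bigl((P^\triangleleft Y)_{-1}\bigr)$, and the vertical augmentation of $Y$ collapses the second factor onto $Y_{0,0}$, leaving the first-direction source map of $P^\triangleleft Y$ onto $Y_{1,0}$ as an equivalence. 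For the horizontal condition \eqref{eqn: augmented 2A}, the source is the space of squares of $Y$ whose bottom-left corner is a zero object; here I would rewrite a square via the baby cospan equivalence of $Y$ as a pair $(\text{right edge},\text{bottom edge})$ sharing the bottom-right corner, under which $\targethor$ becomes the projection to the right edge and the zero-corner constraint becomes the condition that the bottom edge has zero source. The horizontal augmentation of $Y$ then collapses the bottom-edge factor onto $Y_{0,0}$, again exhibiting the desired equivalence. The main obstacle throughout is exactly this mixing: because $j$ shifts only the first direction, the structure maps of $P^\triangleleft Y$ are \emph{interior} faces of $Y$ and the augmentation is a crossed pullback, so each condition for $P^\triangleleft Y$ must be assembled from two different conditions on $Y$ (Segal with augmentation in one case, stability with augmentation in the other), and the care lies in matching the source and target maps correctly after the shift.
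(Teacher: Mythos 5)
Your proposal is correct, and for the stability and augmentation conditions it is essentially the paper's own proof: the same zigzag for stability (rewriting $Y_{2,0}\htimes{Y_{1,0}}Y_{1,1}$ as $Y_{2,0}\htimes{Y_{0,0}}Y_{0,1}$ via the baby cospan equivalence and then invoking the ``adult'' stability of \cref{adultstability} at $(q,r)=(2,1)$), and the same two factorizations for the augmentation maps (Segal-splitting plus vertical augmentation for \eqref{eqn: augmented 1A}; pasting of homotopy pullbacks, then the stability map \eqref{eq:babystabilitycospan} in the first factor and the augmentation \eqref{eqn: augmented 2A} in the second, for the map corresponding to \eqref{eqn: augmented 2A}). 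The one place you genuinely diverge is the Segal condition in the shifted direction: the paper observes that $Y_{1+\bullet,\indb}=P^{\triangleleft}(Y_{\bullet,\indb})$, that a Segal object is $2$-Segal, and then cites the path criterion \cite[Theorem 6.3.2]{DK}, whereas you argue by hand that the iterated pullback $Y_{2,\indb}\htimes{Y_{1,\indb}}\cdots\htimes{Y_{1,\indb}}Y_{2,\indb}$ assembles into $Y_{1+\inda,\indb}$. Your route works, but note it is exactly the fan-triangulation $2$-Segal map in disguise: under $j$ the spine edges of $\Delta[\inda]$ become the triangles $\{0,i,i{+}1\}$ of $\Delta[1{+}\inda]$ glued along the edges $\{0,i\}$, and in the pullback $Y_{2,\indb}\htimes{Y_{1,\indb}}Y_{2,\indb}$ the gluing leg out of the first factor is the \emph{long} edge $d_1$, which is not a projection under the Segal identification; the assembly therefore requires an induction using that the gluing leg out of the second factor \emph{is} a projection (together with the identity $d_0d_1=d_0d_0$ to match endpoints), so your phrase ``the Segal maps together with the simplicial identities assemble'' is the only step that deserves spelling out --- it is precisely the content that the cited Dyckerhoff--Kapranov criterion packages for you. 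What your version buys is self-containedness (no appeal to the $2$-Segal path-space machinery); what the paper's version buys is a one-line proof and a conceptual explanation of why the d\'ecalage direction is Segal.
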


\begin{proof}
To see that $P^\triangleleft Y$ is double Segal, we need to show that for fixed $\inda$ and $\indb$, the simplicial objects $Y_{1+\inda,\bullet}$ and $Y_{1+\bullet,\indb}$ are Segal objects. 

The fact that $Y$ is double Segal implies that $Y_{1+\inda,\bullet}$ is a Segal object.  For $Y_{1+\bullet,\indb}$, we can use the fact that 
\[ Y_{1+\bullet,\indb}=P^{\triangleleft}(Y_{\bullet,\indb}). \]
Since $Y_{\bullet,\indb}$ is a Segal object, it is also a $2$-Segal object. Hence, by the path criterion \cite[Theorem 6.3.2]{DK}, its path construction $P^{\triangleleft}(Y_{\bullet,\indb})$ is Segal.

To see that $P^\triangleleft Y$ is stable, consider the stability map from \eqref{eq:babystabilitycospan}, which by definition of $P^\triangleleft Y$ is the map
\begin{equation} \label{eqn stability for P}
Y_{2,1}\longrightarrow Y_{2,0}\htimes{Y_{1,0}}Y_{1,1}.
\end{equation}
This map takes an element in the left-hand side, which can be though of as two vertically composable squares, to the pair consisting of their horizontal targets in the first coordinate and the bottom square in the second.

In the following zigzag, the left-hand map can be seen to be an equivalence using a combination of stability and the Segal condition. Note that this map is exactly one of those appearing in the alternate definition of stability from \cref{adultstability}. The right-hand map is an equivalence by \eqref{eq:babystabilitycospan},
\[ Y_{2,1} \underset{\simeq}{\xrightarrow{(t_h, t_v)}} Y_{2,0}\htimes{Y_{0,0}}Y_{0,1} \simeq Y_{2,0}\htimes{Y_{1,0}}Y_{1,0} \htimes{Y_{0,0}} Y_{0,1} \underset{\simeq}{\xleftarrow{(t_h, t_v)}} Y_{2,0}\htimes{Y_{1,0}} Y_{1,1} \, .\]
By inspection, the map \eqref{eqn stability for P} fits into a commutative triangle with this zigzag of weak equivalences, hence is a weak equivalence itself.  A similar argument applies to the dual part of stability.

We now check that $P^\triangleleft Y$ is augmented. We first consider the augmentation map~\eqref{eqn: augmented 1A}, which, by definition of $P^\triangleleft Y$ is given by the composite
\[Y_{2,0} \prescript{d_1}{}{\htimes{Y_{1,0}}} (Y_{1,0} \htimes{Y_{0,0}} Y_{-1}) \xrightarrow{\pr_1} Y_{2,0} \xrightarrow{d_2} Y_{1,0}. \]
We can further factor it as the composite 
\[ Y_{2,0}\htimes{Y_{1,0}} Y_{1,0} \htimes{Y_{0,0}} Y_{-1} \simeq Y_{2,0} \htimes{Y_{0,0}} Y_{-1} \xrightarrow{\simeq} Y_{1,0}\htimes{Y_{0,0}} Y_{1,0} \htimes{Y_{0,0}} Y_{-1} \xrightarrow{\simeq} Y_{1,0}\htimes{Y_{0,0}} Y_{0,0} \xrightarrow{\simeq} Y_{1,0} \]
as follows.  The first equivalence is a general property of homotopy pullbacks, the second is given by the double Segal condition in the first variable, the third uses the associativity of homotopy pullbacks and augmentation, and the last is again a general property of homotopy pullbacks.

As for the augmentation map \eqref{eqn: augmented 2A}, by definition of $P^\triangleleft Y$ it is the map
\begin{equation} \label{eqn aug map Ptriangle}
Y_{1,1} \htimes{Y_{1,0}} (Y_{1,0} \htimes{Y_{0,0}} Y_{-1}) \xrightarrow{\pr_1} Y_{1,1} \longrightarrow Y_{1,0} \,.
\end{equation}
Consider the following diagram, of which both squares are homotopy pullbacks:
\[ \begin{tikzcd}
Y_{1,1} \htimes{Y_{1,0}} (Y_{1,0} \htimes{Y_{0,0}} Y_{-1}) \arrow[dashed]{r} \arrow[dashed]{d} & Y_{1,0}  \htimes{Y_{0,0}} Y_{-1} \arrow{d} \arrow{r} & Y_{-1} \arrow{d}\\
Y_{1,1} \arrow{r} & Y_{1,0} \arrow{r} & Y_{0,0}.
\end{tikzcd} \]
Hence, the outer square also is a homotopy pullback and the map \eqref{eqn aug map Ptriangle} can be identified with the map
\[ Y_{1,1} \htimes{Y_{1,0}} (Y_{1,0} \htimes{Y_{0,0}} Y_{-1}) \xrightarrow{\simeq} Y_{1,1} \htimes{Y_{0,0}} Y_{-1} \xrightarrow{\pr_1} Y_{1,1} \longrightarrow Y_{1,0} \,. \]
By inspection the composite of the two right-hand maps can be factored differently as follows. We can apply the stability map \eqref{eq:babystabilitycospan} in the first factor, then use associativity and the augmentation map \eqref{eqn: augmented 2A} in the second factor to obtain
\[ Y_{1,1} \htimes{Y_{0,0}} Y_{-1} \longrightarrow Y_{1,0} \htimes{Y_{0,0}} Y_{0,1} \htimes{Y_{0,0}} Y_{-1} \longrightarrow Y_{1,0} \htimes{Y_{0,0}} Y_{0,0} \simeq Y_{1,0}. \]
Since both are weak equivalences, the map \eqref{eqn aug map Ptriangle} is a weak equivalence as well.
\end{proof}

For any preaugmented bisimplicial object $Y$ the initial path construction comes with a canonical map induced by the extra face maps~$d_0$. By abuse of notation, we again denote this map by $d_0\colon P^\triangleleft Y \to Y$.

\begin{defn}
The {\em relative exact nerve} of an exact functor $f\colon \cA\to \cB$ is the preaugmented bisimplicial groupoid $\nrel f\colon \Sigma^{\op} \to \gpd$ given by the pullback
\[ \begin{tikzcd}
\nrel f \arrow{r} \arrow{d}& N^e\cA\arrow{d}{N^ef}\\
P^\triangleleft(N^e\cB) \arrow[r, "d_0" swap] & N^e\cB.
\end{tikzcd} \]
\end{defn}

\begin{rmk} \label{pathfibration2}
For an exact category $\cB$ and $Y=N^e\cB$, the map $d_0$ is a levelwise fibration of groupoids. In particular, the pullback defining the initial path object is also a model for the $2$-pullback.
\end{rmk}

\begin{rmk} \label{rmk preimage}
The relative exact nerve of an exact functor $f \colon \cA\to \cB$ is the preaugmented bisimplicial groupoid $\nrel f \colon \Sigma^{\op} \to \gpd$ defined as follows.
\begin{enumerate}
\item The augmentation $(\nrel f)_{-1}$ is the subgroupoid of $\Fun([1], \cB)_{iso}$ spanned by those admissible epimorphisms in $\cB$ whose target is a zero object.

\item An object in the groupoid in degree $(\inda, \indb)$ is an object $(a_{i,j})\in N^e(\cA)_{\inda, \indb}$ together with a sequence of admissible monomorphisms $b_{01}\mono \cdots \mono b_{0r}$ in $\cB$ and admissible epimorphisms $b_{0j}\epi f(a_{0j})$ which fit into a diagram 
\[ {\scriptsize \begin{tikzcd}
 b_{00} \arrow[epi]{d} \arrow[mono]{r} & b_{01} \arrow[mono]{r} \arrow[epi]{d} &  \cdots \arrow[mono]{r}   & b_{0 \indb} \arrow[epi]{d}\\
 f(a_{00}) \arrow[epi]{d} \arrow[mono]{r} & f(a_{01}) \arrow[epi]{d} \arrow[mono]{r} &  \cdots \arrow[mono]{r}   & f(a_{0 \indb}) \arrow[epi]{d}\\
  f(a_{10}) \arrow[epi]{d}\arrow[mono]{r} & f(a_{11}) \arrow[epi]{d} \arrow[mono]{r} &  \cdots \arrow[mono]{r}  & f(a_{1 \indb}) \arrow[epi]{d}\\
\vdots \arrow[epi]{d}& \vdots \arrow[epi]{d}  & \ddots & \vdots \arrow[epi]{d} \\
f(a_{\inda 0}) \arrow[mono]{r}  & f(a_{\inda 1}) \arrow[mono]{r}  & \cdots   \arrow[mono]{r} & f(a_{\inda, \indb})
\end{tikzcd} } \]
subject to the condition that all squares are bicartesian.
\end{enumerate}
\end{rmk}

\begin{rmk}
For any exact category $\cA$, the relative nerve of the functor $\cA\to\{0\}$, constant at the trivial exact category $\{0\}$, recovers the exact nerve of $\cA$. Indeed, there is a levelwise acyclic fibration of groupoids
\[ \nrel\left(\cA\to\{0\}\right)\stackrel{\simeq}{\longrightarrow} N^{e}(\cA). \]
\end{rmk}

\begin{thm} \label{prop relative asds}
The relative exact nerve of an exact functor $f\colon \cA\to \cB$ is an augmented stable double Segal groupoid.
\end{thm}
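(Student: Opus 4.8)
The plan is to realize $\nrel f$ as a homotopy pullback of augmented stable double Segal groupoids and then to show that these three conditions are inherited by such homotopy pullbacks. First I would invoke \cref{pathfibration2}, by which $d_0\colon P^\triangleleft(N^e\cB)\to N^e\cB$ is a levelwise fibration; hence the pullback square defining $\nrel f$ is a $2$-pullback, so that $\nrel f$ is, levelwise in $\Sigma^{\op}$, the homotopy pullback of the span
\[ N^e\cA\xrightarrow{N^ef} N^e\cB\xleftarrow{d_0} P^\triangleleft(N^e\cB). \]
Each corner of this span is an augmented stable double Segal groupoid: the groupoids $N^e\cA$ and $N^e\cB$ by \cref{thm nerve 1}, and $P^\triangleleft(N^e\cB)$ by \cref{Ptrianglefibrant}, using that $N^e\cB$ is injectively fibrant.

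The key observation is that each of the three defining conditions---double Segal (\cref{doublesegal}), stable (\cref{babystability}), and augmented (\cref{babyaugmentation})---requires a certain structure map to be a weak equivalence, where both its source and its target are built from the underlying preaugmented bisimplicial groupoid by iterated homotopy pullbacks, that is, by finite homotopy limits. Since homotopy limits commute with one another, and since $\nrel f$ is itself a levelwise homotopy pullback, I would identify each such structure map for $\nrel f$ with the homotopy pullback, formed over the corresponding data of $N^e\cB$, of the analogous structure maps for $N^e\cA$ and $P^\triangleleft(N^e\cB)$. Concretely, the Segal map for $\nrel f$ is the $2$-pullback of the Segal maps of the three corners; the stability map is the $2$-pullback of their stability maps; and, using that the augmentation of a levelwise pullback is the pullback of the augmentations, the augmentation map of $\nrel f$ is the $2$-pullback of the augmentation maps of the three corners.

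To conclude, each of these structure maps for the three corners is a weak equivalence, precisely because the corners are augmented stable double Segal groupoids. As recalled after \cref{ModelStructureGpd}, the $2$-pullback construction is invariant under equivalences of groupoids, which is to say that a $2$-pullback of a levelwise equivalence of spans is again an equivalence. Applying this, each structure map of $\nrel f$ is a $2$-pullback of weak equivalences, hence itself a weak equivalence; therefore $\nrel f$ satisfies all three conditions and is an augmented stable double Segal groupoid. I expect the main obstacle to be the bookkeeping required to commute the various homotopy limits and to identify the structure maps of $\nrel f$ as $2$-pullbacks of those of the corners, together with securing the fibrancy needed to model every $2$-pullback in sight by a strict pullback; this last point is furnished by \cref{pathfibration2} and by the isofibration lemmas \cref{lemmaisofibration0,lemmaisofibration}.
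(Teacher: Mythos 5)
Your proposal is correct and takes essentially the same approach as the paper: the paper's proof likewise observes that $2$-pullbacks commute with each other, so that the augmentation, stability, and double Segal maps of $\nrel f$ are pullbacks of the corresponding maps for $N^e\cA$, $N^e\cB$, and $P^{\triangleleft}(N^e\cB)$, all of which are augmented stable double Segal groupoids by \cref{thm nerve 1,Ptrianglefibrant}. Your additional attention to fibrancy (via \cref{pathfibration2} and the injective fibrancy of $N^e\cB$ needed for \cref{Ptrianglefibrant}) simply makes explicit hypotheses the paper's terse proof leaves implicit.
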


\begin{proof}
Given that $2$-pullbacks commute with each other, the augmentation, stability, and double Segal maps of $\nrel f$ can be expressed as the pullbacks of the corresponding maps for $N^e\cA$, $N^e\cB$ and $P^{\triangleleft}(N^e\cB)$. Given that these three preaugmented bisimplicial groupoids are stable augmented double Segal groupoids by \cref{thm nerve 1,Ptrianglefibrant}, it follows that $\nrel f$ is also.
\end{proof}

We can now apply our generalized $\sdot$-construction, and compare it to Waldhausen's original relative $\sdot$-construction.

\begin{thm} \label{thm comparison relative}
If $f\colon\cA\to\cB$ is an exact functor of exact categories, then there is a levelwise weak equivalence of simplicial groupoids
\[ \sdotrel f \longrightarrow \sdot(\nrel f). \]
\end{thm}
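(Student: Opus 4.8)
The plan is to exploit that the generalized construction $\sdot = p_*$ is a right adjoint (\cref{sdot}), so it preserves all limits, together with the fact that it is the right adjoint of a Quillen equivalence (\cref{QuillenEquivalence}) and hence a right Quillen functor preserving fibrations and homotopy pullbacks along fibrations. Since $\nrel f$ is by definition the pullback of the cospan $N^e\cA \xrightarrow{N^e f} N^e\cB \xleftarrow{d_0} P^\triangleleft(N^e\cB)$, in which $d_0$ is a levelwise fibration by \cref{pathfibration2}, applying $\sdot$ yields
\[ \sdot(\nrel f) \cong \sdot(N^e\cA) \ttimes{\sdot(N^e\cB)} \sdot\bigl(P^\triangleleft(N^e\cB)\bigr), \]
and this is again a homotopy pullback because $\sdot(d_0)$ remains a fibration. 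The goal is then to identify this cospan, levelwise, with $\sdote(\cA) \to \sdote(\cB) \leftarrow P^\triangleleft\sdote(\cB)$, whose pullback defines $\sdotrel f$ (here the right leg is a levelwise fibration by \cref{pathfibration1}).

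The two outer corners are immediate from \cref{comparisonsdotexact}, which supplies natural isomorphisms $\sdot(N^e\cA) \cong \sdote(\cA)$ and $\sdot(N^e\cB) \cong \sdote(\cB)$ compatible with the maps induced by $f$. The essential new input is a compatibility of $\sdot$ with the two path constructions, namely a natural levelwise equivalence
\[ \sdot\bigl(P^\triangleleft Y\bigr) \simeq P^\triangleleft\bigl(\sdot Y\bigr) \]
of simplicial objects, compatible with the respective maps $d_0$. Granting this for $Y = N^e\cB$ and combining with the previous isomorphism produces $\sdot(P^\triangleleft N^e\cB) \simeq P^\triangleleft\sdote(\cB)$ over $\sdote(\cB)$, completing the identification of the cospans.

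To prove the path compatibility I would argue through the formula $\sdotn{n}(Y) \cong \Map_{\sasset}(\wW{n}, Y)$ from \cref{nerveofWn}. On the one hand $(P^\triangleleft \sdot Y)_n = (\sdot Y)_{n+1} \cong \Map_{\sasset}(\wW{n+1}, Y)$; on the other hand $\sdotn{n}(P^\triangleleft Y) \cong \Map_{\sasset}(\wW{n}, P^\triangleleft Y)$, so it suffices to produce a natural identification $\Map_{\sasset}(\wW{n}, P^\triangleleft Y) \cong \Map_{\sasset}(\wW{n+1}, Y)$. On underlying bisimplicial objects $P^\triangleleft$ is precomposition with $j \times \id$, and the driving combinatorial identity is that the ordinal sum satisfies $p \circ (j \times \id) = j \circ p$ on $\Delta \times \Delta$, since $[0] \star ([\inda] \star [\indb]) = ([0] \star [\inda]) \star [\indb]$; via the explicit description of $\wW{n}$ this matches the bisimplicial part of $\wW{n+1}$ with the reindexing of $\wW{n}$ dictated by $(P^\triangleleft Y)_{\inda,\indb} = Y_{1+\inda,\indb}$. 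The one genuinely delicate point, and the main obstacle, is the augmentation: because $(P^\triangleleft Y)_{-1} = Y_{1,0} \ttimes{Y_{0,0}} Y_{-1}$ is defined by a pullback rather than by restriction along an endofunctor of $\Sigma$, one must check directly that this pullback is exactly what is required for maps out of the augmentation part of $\wW{n}$ to correspond to the augmentation data of $\wW{n+1}$, and that the extra face map $d_0$ is respected under the correspondence.

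Finally, having exhibited a levelwise equivalence between the two cospans over the common base $\sdote(\cB) \cong \sdot(N^e\cB)$, and using that both $\sdotrel f$ and $\sdot(\nrel f)$ are homotopy pullbacks along the fibrations $d_0$ (\cref{pathfibration1,pathfibration2}), the universal property of the pullback produces the comparison map $\sdotrel f \to \sdot(\nrel f)$, and invariance of homotopy pullbacks under levelwise equivalence shows that it is a levelwise weak equivalence of simplicial groupoids, as desired.
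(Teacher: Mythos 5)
Your global strategy is the same as the paper's: use that $\sdot=p_*$ preserves the defining pullback, identify the outer corners via \cref{comparisonsdotexact}, and reduce everything to a compatibility $P^\triangleleft(\sdot Y)\simeq \sdot(P^\triangleleft Y)$ for $Y=N^e\cB$, which is precisely the paper's \cref{comparisoninitialpath}. The genuine gap is in your proof of that compatibility. You claim a natural \emph{identification} $\Map_{\sasset}(\wW{n},P^\triangleleft Y)\cong\Map_{\sasset}(\wW{n+1},Y)$; no such isomorphism exists. By the adjunction $C^\triangleleft\dashv P^\triangleleft$, the left-hand side is $\Map_{\sasset}(C^\triangleleft\wW{n},Y)$, and a combinatorial analysis (carried out in the paper) shows that $C^\triangleleft\wW{n}$ sits inside $\wW{1+n}$ as the diagram \emph{with its $0$th column deleted}, so restriction $\Map_{\sasset}(\wW{1+n},Y)\to\Map_{\sasset}(C^\triangleleft\wW{n},Y)$ forgets data and is not invertible. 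Already for $n=0$: $\Map_{\sasset}(\wW{0},P^\triangleleft Y)=(P^\triangleleft Y)_{-1}=Y_{1,0}\ttimes{Y_{0,0}}Y_{-1}$, whereas $\Map_{\sasset}(\wW{1},Y)=\sdotn{1}(Y)$ additionally records an augmented horizontal edge, i.e.\ a factor of the form $Y_{-1}\ttimes{Y_{0,0}}Y_{0,1}$; these are different objects. The restriction map is a weak equivalence only because the forgotten $0$th column is contractible data by the augmentation condition \eqref{eqn: augmented 2A} --- equivalently, because $C^\triangleleft\wW{n}\hookrightarrow\wW{1+n}$ is a pushout along an augmentation acyclic cofibration, hence an acyclic cofibration in the model structure for augmented stable double Segal spaces --- and because $Y=N^e\cB$ is fibrant there (injective fibrancy via \cref{SigmaFibrancy} together with \cref{thm nerve 1}). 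So the heart of the theorem is exactly the step your ``driving combinatorial identity'' tries to get for free: the identity $p\circ(j\times\id)=j\circ p$ handles the underlying bisimplicial part, but the augmentation discrepancy is not a routine check to be verified --- it is where the augmentation hypothesis enters irreducibly, and without it the statement you propose to prove is false. The correct output is a levelwise acyclic fibration, not an isomorphism, which is also why the theorem asserts only a weak equivalence.

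A secondary but real issue: you assert that $\sdot(d_0)$ ``remains a fibration'' because $\sdot$ is right Quillen. For that you would need $d_0\colon P^\triangleleft(N^e\cB)\to N^e\cB$ to be a fibration in the relevant (localized injective) model structure on $\saS$, whereas \cref{pathfibration2} only provides a levelwise fibration of groupoids, which is weaker. The paper instead proves directly that $\sdotn{n}P^\triangleleft(N^e\cB)\to\sdotn{n}(N^e\cB)$ is a Kan fibration by mapping the cofibration $\wW{n}\hookrightarrow C^\triangleleft\wW{n}$ (deleting the $0$th row) into the injectively fibrant object $N^e\cB$. Your final assembly --- inducing the comparison map between pullbacks and invoking invariance of homotopy pullbacks along fibrations --- is sound once these two points are repaired, and coincides with the paper's concluding argument.
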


The proof of this theorem applies the following proposition to~$Y=N^{e}(\cB)$.

\begin{prop} \label{comparisoninitialpath}
If $Y$ is an augmented stable double Segal groupoid which is injectively fibrant as a preaugmented bisimplicial space, then there is levelwise acyclic fibration of simplicial groupoids
\[ P^\triangleleft(\sdot Y)\longrightarrow\sdot(P^\triangleleft Y). \]
\end{prop}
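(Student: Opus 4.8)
The plan is to rewrite both sides using the identification $\sdotn{m}(Z)\cong\Map_{\sasset}(\wW{m},Z)$ from \cref{nerveofWn}. Since the initial path functor on simplicial objects is precomposition with $[n]\mapsto[n{+}1]$, the left-hand side at level $n$ becomes $\Map_{\sasset}(\wW{n+1},Y)$, while the right-hand side is $\Map_{\sasset}(\wW{n},P^\triangleleft Y)$. The conceptual engine is a base-change observation: writing $\bar\jmath$ for the join with an initial object and $p$ for the ordinal-sum functor of \cref{sdot}, associativity of the join gives $(p\circ i)\circ(\bar\jmath\times\id)=\bar\jmath\circ(p\circ i)$ on $\Delta\times\Delta$, so the two ways of shifting the first variable agree on underlying bisimplicial objects. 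First I would use this identity, together with the counit of the adjunction $(p^*,\sdot)$, to produce on underlying bisimplicial objects a natural map $p^*(P^\triangleleft\sdot Y)\to P^\triangleleft Y$; its adjoint transpose is the desired comparison $P^\triangleleft(\sdot Y)\to\sdot(P^\triangleleft Y)$.

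The augmentation must be handled separately, because $P^\triangleleft Y$ does not arise by restriction: by \cref{const path object} its augmentation is the pullback $Y_{1,0}\ttimes{Y_{0,0}}Y_{-1}$ rather than a single value of $Y$. Here the relevant source is $(P^\triangleleft\sdot Y)$ in augmentation degree, which unwinds to $\sdotn{1}(Y)=\Map_{\sasset}(\wW{1},Y)$, and I would define the map into the pullback using the structure maps of $\wW{1}$—its horizontal cell and its diagonal augmentation cells—combined via the universal property of the pullback. That these components assemble into a map of preaugmented bisimplicial objects, compatibly with the one just built on the underlying part, is a routine compatibility check with the (bi)simplicial structure maps.

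It then remains to check that the comparison is a levelwise acyclic fibration of groupoids. Since $Y$ is injectively fibrant, I would first invoke the isofibration and $2$-pullback results (\cref{lemmaisofibration,2hpullback}, in the form already used in the proof of \cref{Ptrianglefibrant}) to replace every homotopy pullback by a strict one, so that it suffices to verify that the map is a levelwise equivalence. On the underlying bisimplicial part this is a restriction of the counit of the Quillen equivalence \cref{QuillenEquivalence} evaluated on the fibrant object $Y$, hence a weak equivalence, since $Y$ is an augmented stable double Segal object and $\sdot Y$ is the corresponding $2$-Segal object; on the augmentation it is an equivalence by the augmentation condition \cref{babyaugmentation} for $Y$, equivalently by the computation $(P^\triangleleft Y)_{-1}\simeq Y_{0,0}$ recorded in the remark following \cref{const path object}. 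That the map is moreover a fibration follows from Reedy fibrancy of the objects involved, which is exactly where the hypothesis that $Y$ be injectively fibrant enters, and which also guarantees via \cref{Ptrianglefibrant} that $P^\triangleleft Y$ is itself an augmented stable double Segal object.

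The main obstacle is the augmentation. The underlying bisimplicial comparison is essentially forced by the join-associativity square and the counit, but the two sides carry genuinely different augmentation data: $\wW{n+1}$ records a copy of $Y_{-1}$ at each of its $n{+}2$ diagonal objects, whereas a map $\wW{n}\to P^\triangleleft Y$ records the pullback augmentation at only $n{+}1$ diagonal objects. Reconciling these—absorbing the \emph{extra initial row} of $\wW{n+1}$—is precisely what the stability and augmentation hypotheses on $Y$ provide, and is the reason the comparison is only a levelwise equivalence rather than an isomorphism.
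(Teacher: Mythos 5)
Your setup is sound as far as it goes: the level identifications $P^\triangleleft(\sdot Y)_n\cong\Map_{\sasset}(\wW{1+n},Y)$ and $\sdotn{n}(P^\triangleleft Y)\cong\Map_{\sasset}(\wW{n},P^\triangleleft Y)$, the construction of a comparison map by transposing across $(p^*,\sdot)$ with a hand-built augmentation component, and the diagnosis that the augmentation mismatch ($n{+}2$ versus $n{+}1$ diagonal cells) is the crux, are all correct. But the proof that the map is a levelwise \emph{acyclic fibration} has two genuine gaps, and they sit exactly where the paper does its real work. First, the fibration claim: ``follows from Reedy fibrancy of the objects involved'' is not an argument --- a map between fibrant objects need not be a fibration, and nothing in your construction (counit plus adjoint transposition) produces one. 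The paper's mechanism is corepresentability: $P^\triangleleft$ has a \emph{left} adjoint $C^\triangleleft$, a combinatorial computation identifies $C^\triangleleft\wW{n}$ as $\wW{n}$ with an extra row of $n$ squares added on top, and the comparison map is restriction along a canonical inclusion $C^\triangleleft\wW{n}\hookrightarrow\wW{1+n}$ of discrete preaugmented bisimplicial spaces (the complement of the $0$th column). Since this inclusion is a cofibration and $Y$ is fibrant in the model structure for augmented stable double Segal spaces, mapping into $Y$ yields a fibration by the simplicial model category axiom \cite[Proposition 9.3.1]{Hirschhorn}; you never exhibit a cofibration inducing your map, so you have no access to this conclusion.

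Second, the equivalence claim. The counit of the Quillen equivalence of \cref{QuillenEquivalence} at fibrant $Y$ is a weak equivalence only in the \emph{localized} model structures --- a local equivalence, not a levelwise one --- and transposing across $(p^*,\sdot)$ again only controls local equivalences; upgrading to a levelwise equivalence would require showing both $P^\triangleleft(\sdot Y)$ and $\sdot(P^\triangleleft Y)$ are local objects and then arguing further, none of which you supply. (Your proposed reduction ``replace homotopy pullbacks by strict ones so that it suffices to verify a levelwise equivalence'' is also a non-sequitur: strictifying pullbacks bears on neither the fibration property nor the equivalence property of this particular map.) Most importantly, the step you yourself flag as the main obstacle --- absorbing the extra initial row --- is precisely what you delegate to ``stability and augmentation hypotheses'' without an argument; in the paper this is the one substantive check, namely that $C^\triangleleft\wW{n}\hookrightarrow\wW{1+n}$ is a pushout along an augmentation acyclic cofibration, hence an acyclic cofibration in the localized model structure, after which mapping into the fibrant object $Y$ gives the levelwise acyclic Kan fibration in a single stroke, and full faithfulness of the nerve transports it to groupoids. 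Without that (or some substitute) your proposal establishes a candidate map but neither of the two asserted properties.
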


For formal reasons, the functor $P^{\triangleleft}\colon\sasset\to\sasset$ admits a left adjoint given by left Kan extension. This adjoint pair is simplicial and we denote the left adjoint by
\[ C^{\triangleleft}\colon\sasset\longrightarrow \sasset. \] 

\begin{proof}[Proof of \cref{comparisoninitialpath}]
We construct the desired map, first as a map of simplicial spaces
(rather than simplicial groupoids). A combinatorial argument
shows that $C^{\triangleleft}\wW n$ consists of a preaugmented double category obtained from $\wW n$ by adding a row of $n$ squares on top. For instance, $C^{\triangleleft}\wW 2$ could be depicted as
\begin{center}
\begin{tikzpicture}[scale=0.6]
\begin{scope}[xshift=12cm, yshift=-5cm]
\draw[fill] (2,0) circle (1pt) node(a01){};
\draw (2,-1) node(a11) {$*$};
\draw[fill] (3, -1) circle (1pt) node(a12){};
\draw[fill] (3, 0) circle (1pt) node(a02){};
\draw (3,-2) node (a22){$*$};
\draw[fill] (4, 0) circle (1pt) node (a03){};
\draw[fill] (4, -1) circle (1pt) node (a13){};
\draw[fill](4, -2) circle (1pt) node (a23){};
\draw (4, -3) node (a33){$*$};
\draw[mono] (a11)--(a12);
\draw[mono] (a01)--(a02);
\draw[mono] (a02)--(a03);
\draw[mono] (a12)--(a13);
\draw[mono] (a22)--(a23);

\draw[epi] (a01)--(a11);
\draw[epi] (a12)--(a22);
\draw[epi] (a02)--(a12);
\draw[epi] (a03)--(a13);
\draw[epi] (a13)--(a23);
\draw[epi] (a23)--(a33);
\draw (a33) node[xshift=0.1cm, yshift=-0.1cm]{$.$};
\end{scope}
\end{tikzpicture}
\end{center}
In particular, there is a canonical inclusion of discrete preaugmented bisimplicial spaces
\[ C^{\triangleleft}\wW n\hookrightarrow\wW{1+n}, \]
which includes $C^{\triangleleft}\wW n$ into $\wW{1+n}$ as the diagram without its $0$th column.  This inclusion can be checked to be an acyclic cofibration in the model structure for stable augmented double Segal spaces, being a pushout along an augmentation acyclic cofibration.

Note that $Y$, viewed as a preaugmented bisimplicial space, is fibrant in the model structure for stable augmented bisimplicial spaces.  Hence, we obtain an acyclic fibration of simplicial sets \cite[Proposition 9.3.1]{Hirschhorn}
\[ (P^\triangleleft(\sdot Y))_n = \Map(\wW{1+n},Y) \longrightarrow \Map(C^{\triangleleft}\wW n,Y)\cong\sdotn n P^\triangleleft Y. \]
The maps of simplicial sets that we constructed assemble to form a map of simplicial spaces
\[ P^\triangleleft(\sdot Y)\longrightarrow\sdot P^\triangleleft Y, \]
which is by construction a levelwise acyclic Kan fibration of simplicial sets. Using the fact that the nerve is fully faithful and creates weak equivalences and fibrations, we see that this map is in fact a levelwise acyclic fibration of groupoids, as desired.
\end{proof}

Given the emphasis on injective fibrancy of preaugmented bisimplicial spaces, we give the following useful criterion.

\begin{lem} \label{SigmaFibrancy}
Let $Y$ be a preaugmented bisimplicial space. Then $Y$ is injectively fibrant if and only if $i^*Y$ is a Reedy fibrant bisimplicial space and $Y_{-1}\to Y_{0,0}$ is a Kan fibration.
\end{lem}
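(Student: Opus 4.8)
The plan is to work directly with the injective model structure on $\sasset=\sset^{\Sigma^{\op}}$, exploiting the fact that $\Sigma^{\op}$ is the cone on $(\Delta\times\Delta)^{\op}$ obtained by freely adjoining the initial object $[-1]$. Since $[-1]$ is terminal in $\Sigma$, a preaugmented bisimplicial space $Y$ is the same datum as its underlying bisimplicial space $i^*Y$ together with the single structure map $Y_{-1}\to Y_{0,0}$, all higher augmentation maps $Y_{-1}\to Y_{\inda,\indb}$ being forced because $(0,0)$ is terminal in $\Delta\times\Delta$. I will also use that $\Delta\times\Delta$ is an elegant Reedy category, so that Reedy cofibrations of bisimplicial spaces are precisely the monomorphisms; hence the Reedy and injective model structures on $\sset^{(\Delta\times\Delta)^{\op}}$ coincide, and ``Reedy fibrant'' may be read as ``injectively fibrant'' throughout. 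I will prove the two implications separately, in each case testing against acyclic cofibrations of the injective structure, i.e. levelwise monomorphisms that are levelwise weak equivalences.

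For necessity, suppose $Y$ is injectively fibrant. First I note that the left Kan extension $i_!\colon\sset^{(\Delta\times\Delta)^{\op}}\to\sasset$ sends a bisimplicial space $Z$ to the preaugmented object with the same underlying $Z$ and \emph{empty} augmentation, since after passing to opposites there are no morphisms from an object of $\Delta\times\Delta$ into $[-1]$. Thus $i_!$ preserves levelwise monomorphisms and levelwise weak equivalences, so it is left Quillen and its right adjoint $i^*$ preserves fibrant objects; hence $i^*Y$ is injectively, and therefore Reedy, fibrant. To extract the augmentation condition I will exhibit a single family of detecting acyclic cofibrations. Writing $F_c$ for the free-diagram functor left adjoint to evaluation at $c$, so that $F_{[-1]}K$ is the constant diagram on $K$ and $F_{(0,0)}K$ is constant on the bisimplicial part with empty augmentation, with $\Map(F_{[-1]}K,Y)\cong\Map(K,Y_{-1})$ and $\Map(F_{(0,0)}K,Y)\cong\Map(K,Y_{0,0})$, the unique map $[-1]\to(0,0)$ induces a natural map $F_{(0,0)}\to F_{[-1]}$. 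Forming the pushout--product of a horn inclusion $\Lambda^k[m]\hookrightarrow\Delta[m]$ with this map yields a map $j$ in $\sasset$ which is an isomorphism on the bisimplicial part and the horn inclusion on the augmentation, hence an injective acyclic cofibration; unwinding the adjunctions, the right lifting property of $Y$ against $j$ is exactly the assertion that $Y_{-1}\to Y_{0,0}$ lifts against $\Lambda^k[m]\hookrightarrow\Delta[m]$. Ranging over all horns shows $Y_{-1}\to Y_{0,0}$ is a Kan fibration.

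For sufficiency, assume $i^*Y$ is Reedy (hence injectively) fibrant and $Y_{-1}\to Y_{0,0}$ is a Kan fibration, and solve an arbitrary lifting problem for $Y$ against an injective acyclic cofibration $j\colon S\to T$. Using the reformulation above, such a problem decomposes into a map on bisimplicial parts $i^*S\to i^*T$, which is an injective acyclic cofibration in $\sset^{(\Delta\times\Delta)^{\op}}$, together with an acyclic cofibration $S_{-1}\to T_{-1}$ of simplicial sets, compatibly with the structure maps to the $(0,0)$-parts. I would lift in two stages: first extend over the bisimplicial part using injective fibrancy of $i^*Y$, which in particular determines a map $T_{0,0}\to Y_{0,0}$; then the remaining extension on augmentations is precisely a lifting problem of the acyclic cofibration $S_{-1}\to T_{-1}$ against the Kan fibration $Y_{-1}\to Y_{0,0}$, the square commuting by naturality of the augmentation structure maps. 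Solving it produces the required lift, so $Y$ is injectively fibrant.

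The main obstacle, and the reason the criterion involves the fibration $Y_{-1}\to Y_{0,0}$ rather than mere fibrancy of $Y_{-1}$, is that $\Sigma$ is not elegant: the adjoined terminal object receives maps from objects of every degree, so injective fibrancy over $\Sigma^{\op}$ is strictly stronger than Reedy fibrancy and cannot be read off from matching objects of a naive Reedy structure, whose matching object at $[-1]$ is trivial. The two-stage lifting argument, together with the explicit detecting acyclic cofibration $j$ built from the free diagrams $F_{[-1]}$ and $F_{(0,0)}$, is what correctly captures this extra strength in the augmentation direction; the delicate point is ensuring commutativity of the second-stage lifting square, which is guaranteed exactly by the naturality of the map $F_{(0,0)}\to F_{[-1]}$ used in the necessity argument.
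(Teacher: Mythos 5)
Your argument is correct, and there is in fact nothing in the paper to compare it against: \cref{SigmaFibrancy} is stated without proof, so your write-up supplies the missing justification, and it is the natural one. The two structural facts you lean on both hold: since $[-1]$ is terminal in $\Sigma$ and $(0,0)$ is terminal in $\Delta\times\Delta$, a preaugmented bisimplicial space is precisely the data of $i^*Y$ together with the single map $Y_{-1}\to Y_{0,0}$, with all other augmentation maps forced; and since $\Delta\times\Delta$ is an elegant Reedy category, the Reedy and injective model structures on bisimplicial spaces coincide, so reading ``Reedy fibrant'' as ``injectively fibrant'' is legitimate (you may wish to cite Bergner--Rezk for elegance, since the paper does not make this identification explicit). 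Your necessity direction checks out in detail: $i_!$ adjoins the empty augmentation because there are no morphisms out of $[-1]$ in $\Sigma$, hence is left Quillen for the injective structures; and your pushout--product maps $j$ are levelwise an isomorphism on the bisimplicial part and the horn inclusion $\Lambda^k[m]\hookrightarrow\Delta[m]$ at $[-1]$, so they are injective acyclic cofibrations, and unwinding the adjunctions shows that lifting $Y$ against them is literally horn-lifting for $Y_{-1}\to Y_{0,0}$. In the sufficiency direction the two-stage lift works as you say; the only points worth spelling out are that the second lifting square commutes because the chosen lift $g$ on bisimplicial parts extends the given map, so both composites $S_{-1}\to Y_{0,0}$ agree with $S_{-1}\to S_{0,0}\to Y_{0,0}$, and that the glued map $T\to Y$ is natural at \emph{every} morphism $[-1]\to(\inda,\indb)$ of $\Sigma^{\op}$, not only at $[-1]\to(0,0)$; this follows because each such morphism factors through $(0,0)$, exactly as in your opening reduction. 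Your closing remark is also accurate: the evident Reedy structure on $\Sigma$ must place $[-1]$ in lowest degree (placing it above all $(\inda,\indb)$ destroys unique factorization), whence its matching object at $[-1]$ is trivial and Reedy fibrancy over $\Sigma$ cannot see the fibration condition on the augmentation --- which is precisely why the lemma is phrased in terms of injective fibrancy.
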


\begin{proof}[Proof of \cref{thm comparison relative}]
We first observe that, by \cref{thm nerve 1}, $N^e(\cB)$ is a stable augmented double Segal groupoid, and one can use \cref{SigmaFibrancy} to show that it is injectively fibrant as a preaugmented bisimplicial space.
By \cref{comparisoninitialpath}, there is a levelwise equivalence of simplicial groupoids, and in particular a levelwise equivalence of simplicial spaces
\[ P^{\triangleleft}\sdote(\cB)=P^{\triangleleft}(N^e\cB)\longrightarrow\sdot P^{\triangleleft}(N^e\cB).\] 
This map fits into a commutative diagram together with two occurrences of isomorphisms from \cref{comparisonsdotexact}, as displayed:
\begin{equation}
\label{morphismofpullbacks}
\begin{tikzcd}
P^{\triangleleft}\sdote(\cB) \arrow{r} \arrow[two heads]{d}{\simeq} &\sdote(\cB) \arrow{d}{\cong} &\sdote(\cA) \arrow{l}\arrow{d}{\cong}\\
\sdot P^{\triangleleft}(N^e\cB) \arrow{r} &\sdot(N^e\cB)&\sdot (N^e\cA)\arrow{l}.
\end{tikzcd}
\end{equation}

Observe that the left-hand side horizontal maps are levelwise Kan fibrations. In \cref{pathfibration1}, the top one was observed to be a levelwise fibration of groupoids, and we now prove the claim for the bottom one. There is a canonical inclusion of discrete preaugmented bisimplicial spaces
\[ \wW n\hookrightarrow C^{\triangleleft}\wW n, \]
which includes $\wW n$ into $C^{\triangleleft}\wW{n}$ as the diagram without its $0$th row.

By mapping this inclusion into the injectively fibrant preaugmented bisimplicial space $N^e\cB$, we obtain a Kan fibration of simplicial sets
\[ \sdotn n P^{\triangleleft}(N^e\cB)\cong\Map(C^{\triangleleft}\wW n,N^e\cB)\longrightarrow\Map(\wW n,N^e\cB)\cong\sdotn n(N^e\cB). \]
The induced map between the pullbacks of the two rows is precisely a map of preaugmented bisimplicial spaces
\[ \sdotrel f\longrightarrow \sdot(P^{\triangleleft} N^e\cB) \ttimes{\sdot(N^e\cB)} \sdot (N^e\cA) \cong\sdot(\nrel f). \]
Observe that, given that the nerve is fully faithful, this map is in fact a map of preaugmented bisimplicial groupoids. We claim that each component
\[ \sdotreln{n} f = P^{\triangleleft}(\sdote\cB)_n\ttimes{\sdoten{n}{\cB}}\sdoten{n}{\cA}\longrightarrow \sdotn n(P^{\triangleleft}N^e\cB)\ttimes{\sdotn n {N^e\cB}}\sdotn n {N^e\cA} \cong \sdotn{n}(\nrel f), \]
is a weak equivalence. It is enough to recall that the vertical arrows in the diagram (\ref{morphismofpullbacks}) are levelwise acyclic Kan fibrations of simplicial sets, and the left-hand horizontal arrows are levelwise Kan fibrations. Given that the nerve functor creates weak equivalences, the map is an equivalence of groupoids, as desired.
\end{proof}

Using the fact that the generalized $\sdot$-construction of a stable augmented double Segal groupoid is a $2$-Segal groupoid, which was shown in \cite[Proposition 5.5]{BOORS2}, we immediately obtain the following corollary. Alternatively, it can be proven directly with a similar argument as for \cref{prop relative asds}.

\begin{cor}
The relative $\sdot$-construction of an exact functor $f \colon \cA \to \cB$ is a $2$-Segal groupoid.
\end{cor}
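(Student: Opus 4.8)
The plan is to combine the structural result of \cref{prop relative asds} with the main theorem of our previous paper and the homotopy-invariance of the $2$-Segal condition. First I would invoke \cref{prop relative asds}, which establishes that the relative exact nerve $\nrel f$ is an augmented stable double Segal groupoid. Applying \cite[Proposition 5.5]{BOORS2}, which guarantees that the generalized $\sdot$-construction carries any augmented stable double Segal groupoid to a $2$-Segal groupoid, we conclude that $\sdot(\nrel f)$ is a $2$-Segal groupoid.

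It then remains to transport this property along the comparison map. By \cref{thm comparison relative} there is a levelwise weak equivalence of simplicial groupoids $\sdotrel f \to \sdot(\nrel f)$. The $2$-Segal condition is phrased entirely in terms of Segal-type maps into homotopy pullbacks indexed by triangulations of polygons, and a levelwise weak equivalence induces weak equivalences both on the levels and on the relevant homotopy pullbacks (since $2$-pullbacks are invariant under equivalences of groupoids). Thus, for each triangulation, the $2$-Segal maps of $\sdotrel f$ and of $\sdot(\nrel f)$ fit into a naturality square with weak equivalences on three of its four sides; by the two-out-of-three property, the former is a weak equivalence precisely because the latter is. This yields the corollary. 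The step I would be most careful about is exactly this transfer across the levelwise equivalence, as it relies on the homotopy pullbacks appearing in the $2$-Segal maps being themselves invariant under levelwise equivalence.

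Alternatively, one can argue directly, mirroring the proof of \cref{prop relative asds}. Since $\sdotrel(f)$ is the $2$-pullback of the diagram $\sdote(\cA)\to\sdote(\cB)\leftarrow P^\triangleleft\sdote(\cB)$, which is a genuine pullback modelling the homotopy pullback by \cref{pathfibration1}, and since $2$-pullbacks commute with the homotopy pullbacks occurring in the $2$-Segal maps, each $2$-Segal map of $\sdotrel(f)$ is the $2$-pullback of the corresponding maps for the three corner simplicial groupoids. Each of these corners is a $2$-Segal groupoid: $\sdote(\cA)$ and $\sdote(\cB)$ by \cite{DK}, while $P^\triangleleft\sdote(\cB)$ is so because the initial path space of a $2$-Segal (indeed Segal) object is again Segal, hence $2$-Segal, by the path criterion \cite[Theorem 6.3.2]{DK}. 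As homotopy pullbacks preserve weak equivalences, the $2$-Segal maps of $\sdotrel(f)$ are weak equivalences, so $\sdotrel(f)$ is a $2$-Segal groupoid.
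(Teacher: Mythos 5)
Your proposal is correct and follows the paper's own reasoning exactly: the paper obtains the corollary immediately by combining \cref{prop relative asds}, \cite[Proposition 5.5]{BOORS2}, and the levelwise equivalence of \cref{thm comparison relative}, and it explicitly mentions your second, direct argument (mirroring the proof of \cref{prop relative asds}) as the alternative proof. One small slip in your alternative route: $\sdote(\cB)$ is $2$-Segal but not Segal in general, so the parenthetical ``(indeed Segal)'' can only sensibly refer to the path space $P^\triangleleft\sdote(\cB)$, whose Segality is precisely what the path criterion \cite[Theorem 6.3.2]{DK} delivers.
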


\appendix \label{appendix}

\stepcounter{section}
\section*{Appendix: Some results on quasi-categories} 
\renewcommand{\thethm}{\Alph{section}.\arabic{thm}}
\renewcommand{\thedefn}{\Alph{section}.\arabic{thm}}
\renewcommand{\theprop}{\Alph{section}.\arabic{thm}}
\renewcommand{\thermk}{\Alph{section}.\arabic{thm}}

Here we compile some results about quasi-categories and their model structure.

\begin{defn}
A \emph{quasi-category} is a simplicial set $\cQ$ such that a lift exists in any diagram of the form
\[ \xymatrix{\Lambda^k[n] \ar[r] \ar[d] & \cQ \\
\Delta[n] \ar@{-->}[ur]} \]
for any $n \geq 2$ and $0<k<n$.  A \emph{Kan complex} is a simplicial set $\cQ$ such that a lift exists in any such diagram for any $n \geq 1$ and any $0 \leq k \leq n$.
\end{defn}

\begin{defn} \label{mappingqcat} 
Let $\cQ$ be a quasi-category and $K$ any simplicial set.  Then we define $\cQ^K$ to be the internal hom
$$\cQ^K:=\Map_{\sset}(K,\cQ).$$
\end{defn}

\begin{prop}[{\cite[Proposition 1.2.7.3]{htt}}]
If $\cQ$ is a quasi-category and $K$ is any simplicial set, then $\cQ^K$ is a quasi-category.
\end{prop}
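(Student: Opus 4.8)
The plan is to verify the defining lifting property of a quasi-category directly, exploiting the cartesian closure of $\sset$. Concretely, to show that $\cQ^K$ is a quasi-category I must solve every lifting problem of an inner horn $\Lambda^k[n] \to \cQ^K$ against $\Delta[n]$ for $n \ge 2$ and $0 < k < n$. First I would transpose such a problem across the adjunction $(-\times K) \dashv \Map_{\sset}(K,-)$ that defines the internal hom: a map $\Lambda^k[n] \to \cQ^K$ corresponds to a map $\Lambda^k[n]\times K \to \cQ$, and likewise on $\Delta[n]$, so that the original lifting problem is equivalent to the problem of lifting $\Lambda^k[n]\times K \to \cQ$ along the inclusion
\[ j\colon \Lambda^k[n]\times K \longrightarrow \Delta[n]\times K. \]

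Next I would reduce the claim to showing that $j$ is inner anodyne, that is, that it lies in the smallest weakly saturated class of monomorphisms containing the inner horn inclusions. Since $\cQ$ is a quasi-category, it has the right lifting property against every inner horn inclusion, and hence---because the class of maps against which a fixed object has the right lifting property is weakly saturated---against every inner anodyne map. Thus once $j$ is known to be inner anodyne, the transposed lifting problem admits a solution, and transposing back produces the desired filler for $\cQ^K$.

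The substantive step is therefore to prove that $j$ is inner anodyne. This is an instance of Joyal's pushout-product property: the pushout-product of an inner anodyne map with any monomorphism is again inner anodyne. Applying this to the inner anodyne map $\Lambda^k[n]\hookrightarrow\Delta[n]$ and the monomorphism $\emptyset\hookrightarrow K$ yields exactly $j$, since the pushout-product of an inner horn inclusion $i$ with $\emptyset \hookrightarrow K$ collapses to $i\times\id_K$. I expect this to be the main obstacle, as the pushout-product property is not formal: its proof proceeds by a skeletal induction on $K$, reducing to the representable case $K=\Delta[m]$, and then exhibiting an explicit filtration of $\Delta[n]\times\Delta[m]$ relative to $\Lambda^k[n]\times\Delta[m]$ whose successive stages attach nondegenerate simplices along inner horns. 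Carrying out this filtration---and checking that each attached horn is genuinely inner, which is where the hypothesis $0<k<n$ is used---is the only genuinely combinatorial part of the argument; everything else is bookkeeping with the adjunction and the saturation property.
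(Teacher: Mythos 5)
Your proof is correct and coincides with the argument the paper relies on: the paper gives no proof of its own but cites \cite[Proposition 1.2.7.3]{htt}, whose proof is precisely your transposition across the adjunction $(-\times K)\dashv (-)^K$ followed by Joyal's pushout-product lemma that inner anodyne maps are stable under pushout-product with monomorphisms. One small quibble with your sketch of that lemma: the skeletal induction reduces the monomorphism $\emptyset\hookrightarrow K$ to the generating boundary inclusions $\partial\Delta[m]\subseteq\Delta[m]$, so the combinatorial filtration must fill $\Delta[n]\times\Delta[m]$ relative to $(\Lambda^k[n]\times\Delta[m])\cup(\Delta[n]\times\partial\Delta[m])$, not merely relative to $\Lambda^k[n]\times\Delta[m]$ as your reduction to the representable case suggests.
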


\begin{rmk}
If $\cD$ is a category, then we can make sense of $\cQ^{\cD}$ as $\cQ^{N\cD}$ via the above definition.  However, for simplicity we often omit the nerve notation and simply write $\cQ^\cD$.  In particular, we sometimes write $\cQ^{[n]}$ rather than $\cQ^{\Delta[n]}$.
\end{rmk}

There are a number of ways to formulate the following notion of weak equivalence between simplicial sets; the main idea is that it is meant to be a homotopical analogue of equivalence of ordinary categories.  Here, we use the definition of Joyal \cite[\S 2.5]{Joyal} 
and use the name \emph{Joyal equivalence} for what he calls \emph{weak categorical equivalence}; see \cite[Definition 1.1.5.14]{htt} for a different but equivalent approach, under the name of \emph{categorical equivalence}.

Recall the fundamental category functor $\tau_1 \colon \sset \rightarrow \cat$ which is left adjoint to the nerve.  Given a simplicial set $A$, we denote by $\tau_0(A)$ the set of isomorphism classes of objects of $\tau_1(A)$.

\begin{defn}  
A map $f \colon A \rightarrow B$ of simplicial sets is a \emph{Joyal equivalence} if, for any quasi-category $\cQ$, the induced map $\tau_0(\cQ^B) \rightarrow \tau_0(\cQ^A)$
is a bijection.
\end{defn}

The following theorem has been proved by Joyal \cite[Theorem 6.12]{JoyalVolumeII}, Lurie \cite[Theorem 2.2.5.1]{htt}, and Dugger and Spivak \cite[Theorem 2.13]{DuggerSpivakMapping}.

\begin{thm} \label{joyalmodel}
The category $\sset$ of simplicial sets admits a cartesian model structure called the \emph{Joyal model structure}, in which the weak equivalences are the Joyal equivalences, the cofibrations are the monomorphisms, and the fibrant objects are precisely the quasi-categories.
\end{thm}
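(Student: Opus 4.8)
The plan is to construct the Joyal model structure by the recognition theorem for combinatorial model categories (Jeff Smith's theorem), prescribing the cofibrations and the weak equivalences and deriving the fibrations. Since $\sset$ is a presheaf category it is locally presentable, so I would take the generating cofibrations to be the boundary inclusions $I=\{\partial\Delta[n]\hookrightarrow\Delta[n]\mid n\geq 0\}$, whose saturation is exactly the monomorphisms, and take $W$ to be the class of Joyal equivalences. Smith's theorem then asks that: (i) $W$ satisfies two-out-of-three and is closed under retracts; (ii) $W$ is an accessible subcategory of the arrow category $\sset^{[1]}$; (iii) every map with the right lifting property against $I$ lies in $W$; and (iv) the class $W\cap\mathrm{cof}(I)$ of cofibrations that are Joyal equivalences is closed under pushout and transfinite composition.

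Conditions (i) and (ii) I would dispatch first. Two-out-of-three and closure under retracts are immediate from the defining condition, since $f\in W$ precisely when $\tau_0(\cQ^{f})$ is a bijection for every quasi-category $\cQ$, and bijections enjoy both properties. Accessibility of $W$ is more delicate: the essential point is to replace the proper class of all quasi-categories by a set, which a solution-set argument of bounded cardinality permits; once $W$ is cut out of $\sset^{[1]}$ by an accessibly-indexed family of bijectivity conditions, it is an accessible subcategory.

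Condition (iii), that every trivial fibration is a Joyal equivalence, I would isolate as a lemma. A trivial fibration admits a section, but the resulting $\Delta[1]$-homotopy inverse does not by itself suffice, since $\Delta[1]$-homotopy equivalences need not be Joyal equivalences; for instance $\Delta[1]\to\Delta[0]$ is a $\Delta[1]$-homotopy equivalence but not a Joyal equivalence, as it fails to induce a bijection on $\tau_0(\cQ^{-})$ already for $\cQ=\Delta[1]$. The correct argument upgrades the section to a homotopy inverse through the invertible interval $E=N(\mathrm{Iso})$, the nerve of the free-living isomorphism, using the lifting property of the trivial fibration against suitable monomorphisms.

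The crux is condition (iv), where I expect the main obstacle. The clean way to obtain weak saturation is to produce a \emph{set} $J$ of generating trivial cofibrations, the \emph{categorical anodyne} maps, and prove $W\cap\mathrm{cof}(I)=\mathrm{cof}(J)$, since any saturated class $\mathrm{cof}(J)$ is automatically closed under pushout and transfinite composition. I would build $J$ from the inner horn inclusions $\Lambda^k[n]\hookrightarrow\Delta[n]$ with $0<k<n$, together with maps manufactured from the interval $E$ so that $J$ can detect when an edge is an equivalence. The inclusion $\mathrm{cof}(J)\subseteq W\cap\mathrm{cof}(I)$ requires each generator to be a monomorphism and a Joyal equivalence, plus closure of $W$ under the relevant colimits of cofibrations. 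The reverse inclusion is the hard part: via a retract argument built from the small object argument factorization, one shows that every cofibration lying in $W$ is a retract of a $J$-cofibration. This forces the identification of the $J$-fibrant objects with the quasi-categories — a map has the right lifting property against the inner horns exactly when it is an inner fibration, so the fibrant objects are precisely the quasi-categories, as the statement demands — and rests on a workable description of Joyal equivalences between quasi-categories as the fully faithful and essentially surjective maps, detected through $\tau_1$ and $E$, together with a homotopy-invariance statement for inner anodyne extensions. These combinatorial facts are exactly the technical core of the cited proofs of Joyal, Lurie, and Dugger--Spivak. Granting (iv), Smith's theorem yields the model structure; its cartesianness follows from the pushout-product stability of monomorphisms and of categorical anodyne maps established along the way, and the fibrant objects are the quasi-categories by the identification just described.
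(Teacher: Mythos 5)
The paper itself does not prove this theorem; it records it as a citation to Joyal, Lurie, and Dugger--Spivak, so the comparison is really with those proofs. Your scaffolding --- Smith's recognition theorem with $I$ the boundary inclusions and $W$ the Joyal equivalences --- is essentially Lurie's route in \cite{htt} (Theorem 2.2.5.1 via A.2.6.15), and your treatments of (i) and (iii) are sound in outline (the warning that a $\Delta[1]$-homotopy inverse is useless and must be upgraded through $E=N(\mathrm{Iso})$ is exactly right). The genuine gap is in your step (iv). You propose to exhibit an explicit set $J$ of ``categorical anodyne'' maps, built from the inner horns $\Lambda^k[n]\hookrightarrow\Delta[n]$ and maps manufactured from $E$, and to prove $W\cap\mathrm{cof}(I)=\mathrm{cof}(J)$, then read off the fibrant objects from $J$. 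This is precisely what is famously unavailable for the Joyal model structure: no explicit set of generating trivial cofibrations is known. Maps with the right lifting property against inner horns and the $E$-maps (the ``naive'' fibrations, or isofibrations) are known to coincide with Joyal fibrations only when the codomain is a quasi-category --- that is Joyal's theorem about pseudo-fibrations between fibrant objects --- and there is no argument that the saturation of your $J$ exhausts $W\cap\mathrm{cof}(I)$. Your retract argument is moreover circular as stated: factoring a trivial cofibration as a $J$-cofibration followed by a $J$-injective map only yields the retract if the $J$-injective map is a trivial fibration, which is exactly the unproven identification $\mathrm{cof}(J)=W\cap\mathrm{cof}(I)$ again.

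The repair is that Smith's theorem never asks for an explicit $J$: condition (iv) only requires $W\cap\mathrm{cof}(I)$ to be closed under pushout and transfinite composition, and this is what the actual proofs verify directly. Lurie does it by showing the categorical equivalences form a perfect (accessible) class --- pulled back along an accessible functor to simplicial categories, which also discharges your condition (ii) more honestly than a bare solution-set gesture --- and that pushouts of trivial cofibrations along cofibrations remain equivalences, a left-properness-style mapping-space argument; Joyal's own construction runs through Cisinski-type machinery on a presheaf category, which likewise produces generating trivial cofibrations only abstractly. The identification of the fibrant objects with the quasi-categories is then a separate substantive theorem (the nontrivial direction, that every quasi-category has the lifting property against \emph{all} trivial cofibrations, uses Joyal's special outer-horn lifting for equivalences), not a formal consequence of a choice of $J$. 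Finally, cartesianness does not need the phantom $J$ either: since Joyal equivalences are defined by exponentiation into quasi-categories and $\cQ^{A\times B}\cong(\cQ^A)^B$, the pushout-product axiom follows directly from the definition together with the fact that cofibrations are the monomorphisms.
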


The fibrations in this model structure are determined as the maps which have the right lifting property with respect to the monomorphisms which are Joyal equivalences, and we refer to them as \emph{quasi-fibrations}.

We also sometimes refer to the model structure on simplicial sets, originally due to Quillen.

\begin{thm}[\cite{QuillenHA}] \label{Quillenmodel} 
The category $\sset$ of simplicial sets admits a model structure in which the weak equivalences geometrically realize to weak homotopy equivalences of spaces, the cofibrations are the monomorphisms, and the fibrant objects are precisely the Kan complexes.
\end{thm}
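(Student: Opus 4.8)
The plan is to verify the model category axioms for the three classes defined as follows: the weak equivalences $W$ are the maps whose geometric realization is a weak homotopy equivalence of spaces, the cofibrations are the monomorphisms, and the fibrations are the \emph{Kan fibrations}, i.e.\ the maps with the right lifting property against every horn inclusion $\Lambda^k[n]\hookrightarrow\Delta[n]$ with $n\geq1$ and $0\leq k\leq n$. Since $\sset$ is a presheaf category it is complete and cocomplete, so the limit--colimit axiom holds. The two-out-of-three property and closure under retracts for $W$ are inherited from the corresponding properties of weak homotopy equivalences via the realization functor $|{-}|$, while monomorphisms and Kan fibrations are closed under retracts for formal reasons.

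I would organize the remaining verification around the recognition theorem for cofibrantly generated model categories, taking as generating cofibrations the boundary inclusions $I=\{\partial\Delta[n]\hookrightarrow\Delta[n]\}_{n\geq0}$ and as generating acyclic cofibrations the horn inclusions $J=\{\Lambda^k[n]\hookrightarrow\Delta[n]\}$. Every simplicial set is small, so the small object argument applies and yields the two required functorial factorizations: a relative $J$-cell complex followed by a Kan fibration, and a monomorphism followed by a map in $I\text{-inj}$. One then checks the routine inclusions. First, that $I\text{-inj}$, the \emph{trivial fibrations}, consists of Kan fibrations that are weak equivalences: the Kan fibration part holds because each horn inclusion lies in the saturation of the boundary inclusions, and the weak equivalence part because such a map admits a section and has strongly contractible fibers. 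Second, that every map in $J$ is a weak equivalence, since each horn inclusion realizes to a deformation retract, and that $W$ is stable under the pushouts and transfinite compositions along monomorphisms occurring in a $J$-cell complex. This last stability is the content of the Gabriel--Zisman theory of anodyne extensions and, while technical, is essentially formal once left properness of $W$ along monomorphisms is known.

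The single substantive obstacle is the converse to the first inclusion above: that a Kan fibration which is a weak equivalence is automatically a trivial fibration, i.e.\ has the right lifting property against all boundary inclusions. This is Quillen's key lemma, and I would prove it through the theory of \emph{minimal fibrations}. The steps are that every Kan fibration is fiberwise homotopy equivalent to a minimal fibration; that the realization of a minimal fibration is a Serre fibration, so that a minimal fibration which is a weak equivalence has contractible fibers and, by minimality, must be an isomorphism, hence a trivial fibration; and finally that the right lifting property transports back along the fiberwise equivalence to the original Kan fibration. An equivalent route avoids minimal fibrations by proving directly that $|{-}|$ carries Kan fibrations to Serre fibrations and then comparing the long exact sequences of homotopy groups.

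With this implication established, the recognition theorem assembles all the verified data --- two-out-of-three and retract closure for $W$, smallness of the domains of $I$ and $J$, the inclusion of $J$-cell complexes into acyclic cofibrations, the inclusion $I\text{-inj}\subseteq W\cap(J\text{-inj})$, and the now-proved reverse inclusion $W\cap(J\text{-inj})\subseteq I\text{-inj}$ --- into a cofibrantly generated model structure on $\sset$. In this structure the weak equivalences and cofibrations are as specified, and the fibrant objects are precisely those $X$ for which $X\to\Delta[0]$ is a Kan fibration, namely the Kan complexes, as claimed.
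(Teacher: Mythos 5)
The paper offers no proof of this statement: it is quoted as a classical theorem with a citation to Quillen's \emph{Homotopical Algebra}, so there is no internal argument to compare against. Your outline is the standard proof from the literature (Quillen's original, in the packaging later formalized as the recognition theorem for cofibrantly generated model categories), and it is correct in structure: the generating sets $I=\{\partial\Delta[n]\hookrightarrow\Delta[n]\}$ and $J=\{\Lambda^k[n]\hookrightarrow\Delta[n]\}$, the identification of $I\text{-inj}$ with maps that are simultaneously Kan fibrations and weak equivalences, and the correct isolation of the one genuinely hard implication --- that a Kan fibration which is a weak equivalence lies in $I\text{-inj}$ --- via minimal fibrations (fiberwise retraction onto a minimal subfibration, realization of a minimal fibration being a fiber bundle and hence a Serre fibration, and a minimal fibration with weakly contractible fibers being an isomorphism). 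One phrasing deserves a caution: you justify the stability of $W$ under pushouts and transfinite compositions of anodyne maps by appeal to ``left properness,'' which risks circularity, since left properness of the Quillen model structure is ordinarily deduced \emph{after} the model structure exists. The non-circular substitute is purely topological: geometric realization preserves colimits and carries monomorphisms to relative CW-inclusions, so the gluing lemma for weak homotopy equivalences of spaces (pushouts along cofibrations) applies directly; alternatively, one shows each anodyne extension realizes to a strong deformation retract. With that repair, your sketch assembles into the standard complete proof, and the identification of the fibrant objects as the Kan complexes is immediate from the definition of the fibrations.
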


We refer to the fibrations of this model structure as \emph{Kan fibrations}. 
In the context of quasi-categories, it is also useful to consider the following maps.

\begin{defn}
A map $f \colon K \rightarrow L$ of simplicial sets is:
\begin{enumerate}
    \item an \emph{inner fibration} if it has the right lifting property with respect to the maps $\Lambda^k[n] \rightarrow \Delta[n]$ for any $n\geq 2$ and $0<k<n$; and
    
    \item an \emph{inner anodyne map} if it has the left lifting property with respect to the inner fibrations.
\end{enumerate}
\end{defn}

While Kan fibrations are defined by an analogous lifting property but with respect to all horns, inner fibrations do not coincide with quasi-fibrations, in that the latter must also have the right lifting property with respect to the map $\Delta[0] \rightarrow NI$ where $I$ is the groupoid with two isomorphic objects.

The following can be seen as an application of Quillen's small object argument.

\begin{prop} \label{satclass}
The class of inner anodyne maps is precisely the smallest weakly saturated class containing the inner horns. In particular, any inner anodyne map can be built as a retract of transfinite compositions of pushouts of inner horn inclusions. 
\end{prop}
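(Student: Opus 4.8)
The plan is to recognize this as an instance of the standard characterization, via the small object argument, of the weakly saturated class generated by a set of maps. Write $\mathcal{I}$ for the set of inner horn inclusions $\Lambda^k[n]\to\Delta[n]$ with $n\geq 2$ and $0<k<n$. By definition the inner fibrations are exactly the maps with the right lifting property against $\mathcal{I}$, i.e.\ the class $\mathrm{rlp}(\mathcal{I})$, and the inner anodyne maps are those with the left lifting property against every inner fibration, i.e.\ the class $\mathrm{llp}(\mathrm{rlp}(\mathcal{I}))$. The goal is to show this coincides with the smallest weakly saturated class $\overline{\mathcal{I}}$ containing $\mathcal{I}$.

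For the inclusion $\overline{\mathcal{I}}\subseteq\mathrm{llp}(\mathrm{rlp}(\mathcal{I}))$, I would first record the purely formal fact that, for any class of maps $\mathcal{S}$, the class $\mathrm{llp}(\mathcal{S})$ of maps with the left lifting property against $\mathcal{S}$ is weakly saturated, i.e.\ closed under pushouts, transfinite composition, and retracts; this is routine bookkeeping with lifting squares. Since every map of $\mathcal{I}$ lifts against every map of $\mathrm{rlp}(\mathcal{I})$ by definition, we have $\mathcal{I}\subseteq\mathrm{llp}(\mathrm{rlp}(\mathcal{I}))$, and as the right-hand side is a weakly saturated class containing $\mathcal{I}$, it must contain the smallest such class $\overline{\mathcal{I}}$.

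For the reverse inclusion $\mathrm{llp}(\mathrm{rlp}(\mathcal{I}))\subseteq\overline{\mathcal{I}}$, the key input is the small object argument, which applies because the domains $\Lambda^k[n]$ are finitely presentable simplicial sets and $\sset$ is locally presentable. It produces, for any map $f$, a factorization $f=p\circ j$ in which $j$ is a transfinite composite of pushouts of coproducts of maps in $\mathcal{I}$, hence $j\in\overline{\mathcal{I}}$, and $p\in\mathrm{rlp}(\mathcal{I})$ is an inner fibration. Now if $f\in\mathrm{llp}(\mathrm{rlp}(\mathcal{I}))$, then $f$ has the left lifting property against $p$; solving the resulting lifting problem for the square determined by $f=p\circ j$ exhibits $f$ as a retract of $j$ via the standard retract argument. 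Since $\overline{\mathcal{I}}$ is closed under retracts and $j\in\overline{\mathcal{I}}$, we conclude $f\in\overline{\mathcal{I}}$.

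This yields the claimed equality of classes, and the ``in particular'' clause is precisely the description of membership in $\overline{\mathcal{I}}$ extracted above: every inner anodyne map arises as a retract of a transfinite composite of pushouts of inner horn inclusions. Essentially all of the argument is formal manipulation of lifting properties; the one step carrying genuine content is the retract argument combined with the applicability of the small object argument, and for an appendix of this kind it may suffice to invoke the latter as a known result rather than reprove it.
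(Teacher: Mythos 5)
Your proof is correct and is exactly the argument the paper has in mind: the paper offers no written proof, merely remarking that the proposition ``can be seen as an application of Quillen's small object argument,'' and your write-up spells out that standard argument (the formal saturation of $\mathrm{llp}(\mathrm{rlp}(\mathcal{I}))$ in one direction, the small object factorization plus the retract argument in the other). The only cosmetic difference is that your factorization uses pushouts of \emph{coproducts} of inner horns, which is harmless since coproducts of maps are themselves transfinite composites of pushouts, so the stated form of the conclusion follows.
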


\begin{rmk} \label{inneranodynecofibration}
Given that all inner horns inclusions are cofibrations, and that cofibrations are closed under pushouts and transfinite compositions and retracts, all inner anodyne extensions are in particular cofibrations.
\end{rmk}

\begin{prop}[{\cite[Proposition 1.11]{JoyalTierney}, \cite[Lemma 2.2.5.2]{htt}}] \label{jtprop1.11} 
Any inner anodyne map is a Joyal equivalence which is bijective on vertices. Furthermore, the functor $\tau_1$ takes inner anodyne maps to isomorphisms of categories.
\end{prop}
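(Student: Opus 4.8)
The plan is to reduce everything to the generating inner horn inclusions by means of \cref{satclass}, which identifies the inner anodyne maps with the smallest weakly saturated class containing the inner horn inclusions $\Lambda^k[n]\hookrightarrow\Delta[n]$ for $n\geq 2$ and $0<k<n$, and then to exploit closure properties that are automatic for each of the three assertions. I would organize the three claims so that the formal ones come first.

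First I would treat the statement about $\tau_1$, as it is the most formal and already implies bijectivity on vertices. The key input is the classical nerve lemma: for any category $\cD$, the nerve $N\cD$ has \emph{unique} fillers for inner horns, i.e.\ the map $N\cD\to\Delta[0]$ is right orthogonal to every inner horn inclusion. Since, for a fixed map $p$, the class of maps left orthogonal to $p$ is weakly saturated (closed under pushouts, transfinite composition, retracts, and coproducts, by the usual lifting formalism), it contains the entire saturated class generated by the inner horns, namely all inner anodyne maps. Hence for inner anodyne $f\colon A\to B$ the restriction $\Hom_{\sset}(B,N\cD)\to\Hom_{\sset}(A,N\cD)$ is a bijection, naturally in $\cD$. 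Transposing across the adjunction $\tau_1\dashv N$ gives a natural bijection $\Hom_{\cat}(\tau_1 B,\cD)\to\Hom_{\cat}(\tau_1 A,\cD)$, so $\tau_1 f$ is an isomorphism of categories by the Yoneda lemma. In particular $\tau_1 f$ is a bijection on objects, and since the objects of $\tau_1 X$ are exactly the vertices of $X$, the map $f$ is bijective on vertices. (One may also see bijectivity on vertices directly: each inner horn inclusion with $n\geq 2$ is already a bijection on vertices, the vertex functor $X\mapsto X_0$ preserves the colimits and retracts defining the saturated class, and bijections are closed under these operations.)

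It remains to show that an inner anodyne map $f\colon A\to B$ is a Joyal equivalence. For an arbitrary quasi-category $\cQ$ I would show that the induced map $\cQ^{f}\colon\cQ^{B}\to\cQ^{A}$ is a trivial fibration of simplicial sets. By the exponential adjunction, $\cQ^{f}$ has the right lifting property against a generating cofibration $\partial\Delta[m]\hookrightarrow\Delta[m]$ exactly when $\cQ\to\Delta[0]$ has the right lifting property against the pushout-product $f\,\widehat{\times}\,(\partial\Delta[m]\hookrightarrow\Delta[m])$. The engine of the argument is the pushout-product property of inner anodyne maps: the pushout-product of an inner anodyne map with a monomorphism is again inner anodyne. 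Granting this, that pushout-product is inner anodyne, hence left orthogonal to the inner fibration $\cQ\to\Delta[0]$, so the required lift exists; letting $m$ vary shows $\cQ^{f}$ is a trivial fibration. Since a trivial fibration is surjective on vertices and reflects equivalences, it induces a bijection on $\tau_0$; applying this to $\cQ^{f}$ for every quasi-category $\cQ$ gives the defining property that $f$ is a Joyal equivalence.

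The main obstacle is the pushout-product property invoked in the previous paragraph: it is the one genuinely combinatorial ingredient, and it is itself proved by a saturation argument (fixing the monomorphism, reducing to the generating inclusions $\partial\Delta[m]\hookrightarrow\Delta[m]$, and then checking that $(\Lambda^k[n]\hookrightarrow\Delta[n])\,\widehat{\times}\,(\partial\Delta[m]\hookrightarrow\Delta[m])$ is inner anodyne). In the setting of this appendix it is cleanest to cite this fact from \cite{JoyalTierney}; all remaining steps are formal consequences of \cref{satclass}, the adjunction $\tau_1\dashv N$, and the orthogonality and lifting formalism.
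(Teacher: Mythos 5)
Your proposal is correct, but note that the paper itself offers no proof of this proposition: it is imported verbatim from the cited sources, so the comparison is really with \cite{JoyalTierney} and \cite[Lemma 2.2.5.2]{htt}. What you have written is essentially the standard Joyal--Tierney argument, cleanly organized: the $\tau_1$ statement via the unique-inner-horn-filler characterization of nerves plus the fact that orthogonality classes are weakly saturated (so \cref{satclass} transports the property from the generators to all inner anodynes, and Yoneda in $\cat$ finishes), and the Joyal-equivalence statement by exhibiting $\cQ^f$ as a trivial fibration through the exponential transpose against the pushout-product. Quarantining the pushout-product lemma (inner anodyne $\mathbin{\widehat{\times}}$ monomorphism is inner anodyne) as the one genuinely combinatorial input, to be cited from \cite{JoyalTierney} or \cite[Corollary 2.3.2.4]{htt}, is exactly the right division of labor, and matches the level of rigor the paper itself adopts by citing the whole proposition. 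This is in contrast to Lurie's own proof of \cite[Lemma 2.2.5.2]{htt}, which runs through simplicial categories and $\mathfrak{C}$; your route stays inside $\sset$ and only uses the definition of Joyal equivalence as given in the paper, which is preferable in this appendix.

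Two small points deserve a line each if this were to be written out. First, ``a trivial fibration is surjective on vertices and reflects equivalences, hence bijective on $\tau_0$'' is slightly incomplete as stated: for injectivity on $\tau_0$ you must also lift the connecting equivalence $p(x)\to p(x')$ to an edge $x\to x'$, which is immediate from the right lifting property against $\partial\Delta[1]\hookrightarrow\Delta[1]$; alternatively, choose a section $s$ and a fiberwise homotopy $sp\simeq \id$, whose component edges lie in the fibers (contractible Kan complexes) and are therefore equivalences, giving $\tau_0(s)\tau_0(p)=\id$ directly. Second, in the Joyal-equivalence paragraph ``left orthogonal'' should be ``has the left lifting property'': the lift against the inner fibration $\cQ\to\Delta[0]$ need not be unique, and by the paper's definition of inner anodyne maps this lifting property holds by definition rather than by an orthogonality argument. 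Neither point affects the correctness of the argument.
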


\begin{defn}[{\cite[\S 1.10]{Joyal}}] \label{jq}
Let $\cQ$ be a quasi-category.  Let $J(\cQ)$ be the maximal Kan complex on the vertices of $\cQ$. 
\end{defn}

In fact, $J$ defines a functor from the category of quasi-categories to the category of Kan complexes.

\begin{prop}[{\cite[\textsection\textsection 4.2-4.3]{JoyalVolumeII}}]
\label{Jpreservesfibs} \label{jtprop1.16} 
The functor $J$ is right adjoint to the inclusion functor from the category of Kan complexes to the category of quasi-categories.  Furthermore, $J$ takes Joyal equivalences to weak equivalences and quasi-fibrations to Kan fibrations.
\end{prop}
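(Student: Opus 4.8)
The plan is to treat the three assertions in turn: the adjunction first, then preservation of fibrations, then preservation of weak equivalences. The genuinely delicate input will be a relative form of Joyal's special outer horn theorem.

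For the adjunction, I would use the explicit description of $J(\cQ)$ as the simplicial subset of $\cQ$ consisting of those simplices all of whose edges are equivalences in $\cQ$; this is the maximal Kan subcomplex on the vertices of \cref{jq}. The key observation is that in a Kan complex every edge is invertible in the homotopy category, so any simplicial map $f\colon K\to\cQ$ out of a Kan complex $K$ sends each edge of $K$, invertible in $\tau_1 K$, to an edge invertible in $\tau_1\cQ$, that is, to an equivalence. Hence every simplex of $K$ maps to a simplex of $\cQ$ with all edges equivalences, so $f$ factors uniquely through the inclusion $J(\cQ)\hookrightarrow\cQ$. This yields a natural bijection $\Hom_{\sset}(K,\cQ)\cong\Hom_{\sset}(K,J(\cQ))$ for $K$ a Kan complex, exhibiting $J$ as right adjoint to the inclusion, with unit the identity (since the composite of the inclusion with $J$ is the identity on Kan complexes) and counit the core inclusion.

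For preservation of fibrations, let $p\colon\cP\to\cQ$ be a quasi-fibration. Since $p$ preserves equivalences it restricts to a map $J(p)\colon J(\cP)\to J(\cQ)$, and I would verify the right lifting property against every horn inclusion. For inner horns this follows since $p$ is an inner fibration; one then checks the filler lands in $J(\cP)$, using that for $n=2$ the single new edge is a composite of equivalences, and for $n\ge 3$ every edge of the filler already appears in the horn. For the outer horns $\Lambda^0[n]$ and $\Lambda^n[n]$ I would invoke the relative special outer horn theorem: because the pivotal edge comes from the core and is therefore an equivalence, the quasi-fibration admits a filler, and the same composite-of-equivalences bookkeeping (for $n=1$ this is exactly equivalence lifting) shows the filler lies in $J(\cP)$. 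Hence $J(p)$ is a Kan fibration; the special case $\cQ=\Delta[0]$, for which $\cP\to\Delta[0]$ is a quasi-fibration precisely because $\cP$ is a quasi-category, recovers that $J(\cP)$ is a Kan complex.

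For preservation of weak equivalences, I would first show that $J$ carries trivial fibrations between quasi-categories to trivial Kan fibrations. An acyclic fibration in the Joyal model structure has the right lifting property against all monomorphisms, since the cofibrations of the Joyal and Quillen model structures coincide; lifting a boundary inclusion $\partial\Delta[n]\to\Delta[n]$ against $p$ and checking that the filler stays in the core, for $n\ge 2$ via the edges already present in $\partial\Delta[n]$ and for $n=1$ via the fact that a trivial fibration is a Joyal equivalence, so $\tau_1(p)$ is conservative and reflects invertibility of the lifted edge, shows that $J(p)$ is again a trivial Kan fibration, in particular a weak equivalence. Since the quasi-categories are exactly the fibrant objects of the Joyal model structure, which is cartesian, so that products of fibrant objects are fibrant with fibrant projections, the fibrant dual of Ken Brown's lemma then upgrades this to the claim that $J$ sends every Joyal equivalence between quasi-categories to a weak equivalence of Kan complexes. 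The main obstacle throughout is the outer-horn step: unlike the inner horns these are not handled by the inner-fibration hypothesis alone, and filling them is precisely where the equivalence condition defining the core is indispensable, via the relative special outer horn property of quasi-fibrations.
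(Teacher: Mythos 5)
Your proof is essentially correct, but note that the paper does not actually prove this proposition: it is imported as a black box from Joyal's lecture notes (the citation to \S\S 4.2--4.3 of \emph{JoyalVolumeII}), so there is no internal argument to compare against. What you have written is in substance a reconstruction of Joyal's own proof: the identification of $J(\cQ)$ as the simplices all of whose edges are equivalences, the factorization argument for the adjunction (using that every edge of a Kan complex is invertible in $\tau_1$, so the unit is the identity), the horn-by-horn verification that $J(p)$ is a Kan fibration, and the dual Ken Brown reduction of weak-equivalence preservation to trivial fibrations. Your bookkeeping is right throughout: for $n\geq 3$ every edge of the filler already lies in the horn, for $n=2$ invertibility of the new edge follows from two-out-of-three for isomorphisms in $\tau_1$, and for $n=0$ all vertices lie in $J$ by definition (\cref{jq}). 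Two caveats. First, the one genuinely deep input --- the special outer horn lifting for quasi-fibrations when the pivotal edge is an equivalence --- is exactly the hard content of Joyal's \S 4, and you cite it rather than prove it; that is a fair division of labor, but be aware your proof is only as self-contained as that theorem, and for $n=1$ you should make explicit that the lifted edge is itself an equivalence, which follows from the lifting property against $\Delta[0]\to NI$ (the very condition the paper uses to distinguish quasi-fibrations from inner fibrations), since an edge extending over $NI$ is an equivalence. Second, a small slip in the Ken Brown step: the cartesianness remark about products of fibrant objects is not what the factorization through the mapping path object needs; it needs a path object for a quasi-category, e.g.\ $\cQ^{NI}$ (note $\cQ^{\Delta[1]}$ would \emph{not} serve, as $\Delta[1]\to\Delta[0]$ is not a Joyal equivalence). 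Since the standard fibrant dual of Ken Brown's lemma applies verbatim in any model category, the conclusion stands unchanged.
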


\begin{defn}[{\cite[\S 4]{JoyalTierney}}] \label{gammadefn}
If $K$ is a simplicial set, then $\Gamma(K)$ is the simplicial space given by $\Gamma(K)_n = J(K^{[n]})$.
\end{defn}

The construction $\Gamma$ defines a functor $\Gamma \colon \sset \rightarrow \Fun(\Delta^{\op}, \sset)$.

We take the following definitions from Joyal and Tierney \cite[\S 2]{JoyalTierney}.  Consider the box product map
\[ - \square - \colon \sset \times \sset \rightarrow \Fun(\Delta^{\op}, \sset)\]
given by $(A, B) \mapsto \left( ([m],[n]) \mapsto A_m \times B_n \right)$.  Fixing the simplicial set $A$, we obtain a functor 
\[ A \square - \colon \sset \rightarrow \Fun(\Delta^{\op}, \sset) \]
which admits a right adjoint
\[ A \backslash - \colon \Fun(\Delta^{\op},\sset) \rightarrow \sset. \]
Observe that, for any simplicial space $X$, the simplicial set $\Delta[k] \backslash X$ is given by $X_{k,*}$.  Furthermore, $\partial \Delta[k] \backslash X$ agrees with the $k$-\emph{matching object} used to define the Reedy model structure on simplicial spaces \cite[Definition 15.2.6]{Hirschhorn}.

\begin{prop}[{\cite[Proposition 4.9]{JoyalTierney}}] \label{jtprop4.9}
If $X$ is a quasi-category and $A$ is any simplicial set, then 
\[ A \backslash \Gamma(X) = J(X^A). \]
In particular, $\Delta[k] \backslash \Gamma(X) = J(X^{[k]})$.
\end{prop}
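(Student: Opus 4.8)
The plan is to compare both sides as sub-simplicial sets of the common quasi-category $X^A$, first disposing of the version without $J$ and then pinning down the role of the maximal Kan complex. Write $\underline{X}$ for the simplicial space $[p]\mapsto X^{[p]}$, so that $\Gamma(X)$ is obtained from $\underline{X}$ by applying $J$ at each level. Combining the defining adjunction $A\square-\dashv A\backslash-$ with the exponential adjunction in $\sset$, I would compute for each $m$
\[ (A\backslash\underline{X})_m \cong \Hom_{\ssset}(A\square\Delta[m],\underline{X}), \qquad \Hom_{\sset}(\Delta[m],X^{[p]})\cong\Hom_{\sset}(\Delta[m]\times\Delta[p],X)\cong (X^{[m]})_p. \]
Unwinding the box product, whose value at outer level $p$ is the coproduct $\coprod_{A_p}\Delta[m]$, a map $A\square\Delta[m]\to\underline{X}$ is the same as a simplicial map $A\to X^{[m]}$; chasing this through yields $(A\backslash\underline{X})_m\cong\Hom_{\sset}(A,X^{[m]})\cong\Hom_{\sset}(A\times\Delta[m],X)\cong (X^A)_m$, that is, a natural identification $A\backslash\underline{X}=X^A$.

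Since $A\backslash-$ is a right adjoint it preserves monomorphisms, so the levelwise inclusion $\Gamma(X)=J(\underline{X})\hookrightarrow\underline{X}$ induces a monomorphism $A\backslash\Gamma(X)\hookrightarrow A\backslash\underline{X}=X^A$, and it remains to identify this sub-simplicial set with $J(X^A)$. Repeating the computation above with $J(X^{[p]})$ in place of $X^{[p]}$, and using the standard fact that a morphism of a functor quasi-category $X^{[p]}$ is an equivalence exactly when it is one at every vertex of $\Delta[p]$, I would identify the $m$-simplices of $A\backslash\Gamma(X)$ with those maps $g\colon A\times\Delta[m]\to X$ such that $g|_{\{a\}\times e}$ is an equivalence in $X$ for every vertex $a$ of $A$ and every edge $e$ of $\Delta[m]$. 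Applying the same vertexwise criterion to $X^A$, the $m$-simplices of $J(X^A)\subseteq X^A$ are cut out by the identical condition. Hence the two sub-simplicial sets of $X^A$ coincide, which gives $A\backslash\Gamma(X)=J(X^A)$; the final assertion is the special case $A=\Delta[k]$.

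The main obstacle is this last identification: keeping the two simplicial directions of the box product aligned, and justifying the vertexwise detection of equivalences in the functor quasi-categories $X^{[p]}$ and $X^A$, on which the matching of the two subobjects entirely rests. Everything else is a formal manipulation of the two adjunctions.
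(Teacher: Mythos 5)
The paper does not actually prove this proposition; it is imported with a citation to Joyal--Tierney, so there is no internal argument to compare against, and your proof must be judged on its own merits. It is correct. The formal half is exactly right: the computation $(A\backslash\underline{X})_m \cong \Hom_{\sset}(A\times\Delta[m],X) = (X^A)_m$ via the two adjunctions is standard, and since $A\backslash-$ is a right adjoint the levelwise inclusion $\Gamma(X)\hookrightarrow\underline{X}$ induces a monomorphism $A\backslash\Gamma(X)\hookrightarrow X^A$, reducing everything to matching two subobjects of $X^A$. The substantive half rests on two inputs that you should cite rather than merely call standard: (i) the $m$-simplices of $J(Q)$, for $Q$ a quasi-category, are precisely those all of whose edges are equivalences in $Q$ (this uses Joyal's theorem that a quasi-category in which every edge is invertible is a Kan complex), and (ii) Joyal's pointwise-invertibility theorem: an edge of $X^K$ is an equivalence if and only if its evaluation at every vertex of $K$ is an equivalence in $X$. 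Input (ii) is genuinely the heart of the proposition --- your formal reduction shows the statement is equivalent to saying that applying $J$ levelwise to $[p]\mapsto X^{[p]}$ and then taking $A\backslash-$ agrees with applying $J$ to the single quasi-category $X^A$, and it is precisely the vertexwise detection of equivalences that collapses the condition ``for every $p$, every $a\in A_p$, every edge $e$ of $\Delta[m]$'' to the condition indexed only by vertices of $A$, via the naturality identity $g_a|_{\{i\}\times e} = g_{v_i^*a}|_e$. One small point you use implicitly: for $A\backslash\Gamma(X)$ to make sense as stated, the levelwise maximal Kan complexes $J(X^{[p]})$ must assemble into a sub-bisimplicial set of $\underline{X}$, i.e., the structure maps $X^{[q]}\to X^{[p]}$ must restrict to the $J$'s; this holds because precomposition preserves pointwise equivalences (again input (ii)), and is tacitly part of the paper's definition of $\Gamma$, so it costs you nothing, but it is worth a sentence in a complete write-up.
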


We recall a sketch of the proof of the following proposition since we use the same technique several times in this paper.

\begin{prop}[{\cite[Proposition 4.10]{JoyalTierney}}] \label{jtprop4.10}
If $X$ is a quasi-category, then $\Gamma(X)$ is a Reedy fibrant Segal space.
\end{prop}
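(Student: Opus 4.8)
The plan is to establish the two conditions separately, in each case reducing a statement about the simplicial space $\Gamma(X)$ to a statement about the quasi-category $X$ via the identification $A\backslash\Gamma(X)=J(X^A)$ from \cref{jtprop4.9} together with the good behavior of $J$ recorded in \cref{jtprop1.16} (that $J$ sends Joyal equivalences to weak equivalences, sends quasi-fibrations to Kan fibrations, and commutes with pullbacks, being a right adjoint). The cartesian structure of the Joyal model structure (\cref{joyalmodel}) is what transports cofibrations and trivial cofibrations to (trivial) quasi-fibrations upon taking internal homs into the fibrant object $X$.

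First I would prove Reedy fibrancy. For each $k\ge 0$ the $k$-th matching object of $\Gamma(X)$ is $\partial\Delta[k]\backslash\Gamma(X)$, so by \cref{jtprop4.9} the matching map is the map
\[ J(X^{[k]})\longrightarrow J(X^{\partial\Delta[k]}) \]
induced by the boundary inclusion $\partial\Delta[k]\hookrightarrow\Delta[k]$. Since this inclusion is a monomorphism, hence a cofibration in the Joyal model structure, and since that structure is cartesian with $X$ fibrant, the induced map $X^{\Delta[k]}\to X^{\partial\Delta[k]}$ on internal homs is a quasi-fibration. Applying $J$ and invoking \cref{jtprop1.16} shows the matching map is a Kan fibration. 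As this holds for all $k$, the space $\Gamma(X)$ is Reedy fibrant; in particular each $\Gamma(X)_n=J(X^{[n]})$ is a Kan complex.

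Next I would verify the Segal condition. The spine inclusion $\Delta[1]\cup_{\Delta[0]}\cdots\cup_{\Delta[0]}\Delta[1]\hookrightarrow\Delta[n]$ is inner anodyne, hence by \cref{jtprop1.11} a Joyal equivalence, and it is a monomorphism, hence a cofibration. Because the Joyal model structure is cartesian and $X$ is fibrant, the induced map $X^{\Delta[n]}\to X^{\mathrm{Sp}[n]}$ is an acyclic quasi-fibration; applying $J$ and \cref{jtprop1.16} it becomes an acyclic Kan fibration, in particular a weak equivalence. On the source this map is $\Gamma(X)_n=J(X^{[n]})$. On the target, the internal hom sends the pushout defining the spine to an iterated pullback, and since $J$ commutes with pullbacks, combining this with \cref{jtprop4.9} identifies $J(X^{\mathrm{Sp}[n]})$ with
\[ \Gamma(X)_1\ttimes{\Gamma(X)_0}\cdots\ttimes{\Gamma(X)_0}\Gamma(X)_1. \]
Thus the weak equivalence $J(X^{\Delta[n]})\to J(X^{\mathrm{Sp}[n]})$ is precisely the (strict) Segal map of $\Gamma(X)$.

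Finally, since $\Gamma(X)$ is Reedy fibrant the structure maps $\Gamma(X)_1\to\Gamma(X)_0$ are Kan fibrations, so the strict pullback above models the homotopy pullback $\Gamma(X)_1\htimes{\Gamma(X)_0}\cdots\htimes{\Gamma(X)_0}\Gamma(X)_1$; hence $\Gamma(X)$ satisfies the Segal condition and is a Segal space. I expect the main subtlety to be the interplay between the two parts: identifying the Segal target as a strict pullback preserved by $J$, and then using Reedy fibrancy (which must be established first) to know that strict pullback is homotopically correct. The one external input is that the spine inclusion is inner anodyne, which is precisely where the pushout-product axiom of the cartesian Joyal structure does the essential work.
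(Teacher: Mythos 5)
Your proposal is correct and follows exactly the argument the paper sketches: use the cartesian structure of the Joyal model structure to see that internal homs out of $X$ turn (acyclic) cofibrations---the boundary inclusions for Reedy fibrancy, the inner anodyne spine inclusions for the Segal condition---into (acyclic) quasi-fibrations, then apply $J$ and the identification $A\backslash\Gamma(X)=J(X^A)$ of \cref{jtprop4.9}. Your spelled-out details (the target of the Segal map as a strict pullback preserved by $J$, and Reedy fibrancy ensuring this strict pullback is homotopically correct) are precisely what the paper's phrase ``a similar argument using the inclusion of the spine'' leaves implicit.
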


\begin{proof}
Let $X$ be a quasi-category, and consider the inclusion $\delta_n \colon \partial \Delta[n] \rightarrow \Delta[n]$.  The induced map \[ X^{\delta_n} \colon X^{\Delta[n]} \rightarrow X^{\partial \Delta[n]} \] 
is a quasi-fibration since the Joyal model structure is cartesian.  It follows that the map
\[ J(X^{\delta_n}) \colon J(X^{\Delta[n]}) \rightarrow J(X^{\partial \Delta[n]}) \]
is a Kan fibration by \cref{Jpreservesfibs}.  By \cref{jtprop4.9}, this map can be identified with
\[ \delta_n \backslash \Gamma(X) \colon \Delta[n] \backslash \Gamma(X) \rightarrow \partial \Delta[n] \backslash \Gamma(X). \]
This latter map is hence a Kan fibration, from which it follows that $\Gamma(X)$ is Reedy fibrant.

A similar argument using the inclusion $\mathcal{I}(n) \hookrightarrow \Delta[n]$ of the spine into the simplex which induces the Segal maps shows that $\Gamma(X)$ is a Segal space.
\end{proof}

Now we give a very brief treatment of ordinal subdivisions of simplicial sets and of categories, following work of Ehlers and Porter \cite{EhlersPorter}.

\begin{defn}
The \emph{ordinal subdivision} of the $n$-simplex $\Delta[n]$ is defined as the coend 
\[ Sd(\Delta[n]) := \int\limits^{p,q} \coprod_{[p+1+q] \to [n]} \Delta[p] \times \Delta[q]. \]
For a general simplicial set $K$, we then define the \emph{ordinal subdivision}
\[ Sd(K) := \int\limits^n \coprod_{\Delta[n] \to K} Sd(\Delta[n]). \]
\end{defn}

There is a similar definition for categories.

\begin{defn}
The \emph{ordinal subdivision} of a category $\cD$ is defined to be
\[Sd(\cD):= \int\limits^{p,q} \coprod_{[p+1+q] \to \cD} [p] \times [q]. \]
\end{defn}

We recall two useful results concerning these subdivisions. The first establishes the relation between these two definitions. Note that one could have hoped for the nerve of $Sd(\cD)$ to be precisely $Sd(N\cD)$, but this is far from being true in general. However, the following weaker, and yet very useful, relation holds. 

\begin{prop}[{\cite[Proposition 3.6]{EhlersPorter}}] \label{epprop3.6}
There is an isomorphism of categories
\[ \tau_1 Sd(\Delta[n]) \cong Sd [n]. \]
\end{prop}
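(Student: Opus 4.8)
The plan is to exploit the fact that $\tau_1$ is a left adjoint. Since $\tau_1\colon\sset\to\cat$ is left adjoint to the nerve functor $N$, it preserves all colimits, and in particular it preserves coproducts and the coend appearing in the definition of $Sd(\Delta[n])$. The one genuinely non-formal input I would record first is the identification
\[ \tau_1(\Delta[p]\times\Delta[q]) \cong [p]\times[q]. \]
Although $\tau_1$ does not preserve products of arbitrary simplicial sets, here both factors are nerves of categories, and the nerve \emph{does} preserve products: $\Delta[p]\times\Delta[q]=N[p]\times N[q]=N([p]\times[q])$. Combined with the fact that the counit $\tau_1 N\to\id$ is an isomorphism (because $N$ is fully faithful, so $\tau_1 N\cong\id_{\cat}$), this yields the displayed identification, naturally in $[p]$ and $[q]$.

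With this in hand the computation is formal. I would apply $\tau_1$ to the coend defining $Sd(\Delta[n])$ and commute it inside, using preservation of colimits:
\[ \tau_1 Sd(\Delta[n]) = \tau_1\int\limits^{p,q}\coprod_{[p+1+q]\to[n]}\Delta[p]\times\Delta[q] \;\cong\; \int\limits^{p,q}\coprod_{[p+1+q]\to[n]}\tau_1(\Delta[p]\times\Delta[q]). \]
Here I use that $\tau_1$ passes through the coend (a colimit) and through the coproducts, and that the indexing sets agree on the nose: the set $\Hom_{\Delta}([p+1+q],[n])$ of simplicial operators equals $\Hom_{\cat}([p+1+q],[n])$, since an order-preserving map of ordinals is precisely a functor between the corresponding posets. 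Substituting the identification above turns the integrand into $[p]\times[q]$, and the result is exactly the coend in $\cat$ defining $Sd[n]$.

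The only point requiring care, which I would flag as the main obstacle, is to check that the identification is not merely an abstract isomorphism of objects but an isomorphism of \emph{coend diagrams}: one must verify that the structure maps of the coend for $Sd(\Delta[n])$---induced by morphisms of $\Delta\times\Delta$ acting on the indexing variable through the join $[p]\star[q]=[p+1+q]$ and on $\Delta[p]\times\Delta[q]$---are carried by $\tau_1$ to the corresponding structure maps for $Sd[n]$. This follows from the naturality of $\tau_1(\Delta[p]\times\Delta[q])\cong[p]\times[q]$ in the pair $(p,q)$ together with $\tau_1 N\cong\id$; it is precisely this naturality that upgrades the levelwise identification to an isomorphism of the two coends, so no further computation is needed.
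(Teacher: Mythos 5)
Your proof is correct. Note that the paper itself does not prove this statement --- it is imported verbatim as \cite[Proposition 3.6]{EhlersPorter} --- so the comparison here is with the cited source rather than with an in-paper argument; your formal coend computation is the natural proof and matches the spirit of the cited one. The two pillars are exactly right: $\tau_1$ is a left adjoint, hence commutes with the coend and the coproducts, and the integrand is identified via $\Delta[p]\times\Delta[q]=N[p]\times N[q]\cong N([p]\times[q])$ together with the counit isomorphism $\tau_1 N\cong\id_{\cat}$, which holds since $N$ is fully faithful. You also correctly flag and dispose of the one genuine pitfall: $\tau_1$ does not preserve products of arbitrary simplicial sets, but it does here because both factors are nerves (alternatively, one could invoke the classical fact that $\tau_1$ preserves \emph{finite} products, which would make the identification immediate without routing through $N$). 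Your observations that $\Hom_{\Delta}([p+1+q],[n])=\Hom_{\cat}([p+1+q],[n])$, so the indexing coproducts agree on the nose, and that the counit isomorphism is natural in $(p,q)$ --- so the levelwise identification upgrades to an isomorphism of coend diagrams, with the contravariant action through the join matching on both sides --- are precisely the points needed to make the formal manipulation legitimate. No gaps.
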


For the second result, we need to recall the notion of ``weak anodyne extension", due to Ehlers and Porter \cite[\S 5.2]{EhlersPorter}; compare with \cref{satclass}.

\begin{defn} 
A map of simplicial sets is called a \emph{weak anodyne extension} if it can be expressed as a composite of finitely many maps which are pushouts of inner horn inclusions.
\end{defn}

\begin{thm}[{\cite[Theorem 6.1]{EhlersPorter}}] \label{epthm6.1}
For every $n \geq 0$, the unit map
\[ \eta \colon Sd \Delta[n] \rightarrow N Sd [n] \]
is a weak anodyne extension.
\end{thm}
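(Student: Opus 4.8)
The plan is to make $\eta$ completely explicit and then realize $NSd[n]$ as built from $Sd\Delta[n]$ by a finite sequence of inner horn fillings. First I would record that $\eta$ is nothing but the unit of the adjunction $\tau_1\dashv N$: by \cref{epprop3.6} we have $\tau_1 Sd\Delta[n]\cong Sd[n]$, so the unit $Sd\Delta[n]\to N\tau_1 Sd\Delta[n]$ is exactly the map $\eta\colon Sd\Delta[n]\to NSd[n]$ in the statement. Unwinding the coend defining $Sd(-)$ on the category $[n]$ shows that $Sd[n]$ is the poset of pairs $(i,j)$ with $0\le i\le j\le n$, ordered by $(i,j)\le (i',j')$ iff $i\le i'$ and $j\le j'$, so that $Sd[n]\cong\operatorname{Ar}[n]$; in particular $\eta$ is a bijection on vertices, both sides having vertex set $\{(i,j)\mid i\le j\}$.

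Next I would describe the non-degenerate simplices of both sides. A reduced non-degenerate $k$-simplex of $Sd\Delta[n]$ is given by a monotone map $\phi\colon[p+1+q]\to[n]$ together with a jointly injective pair of surjections $[k]\twoheadrightarrow[p]$ and $[k]\twoheadrightarrow[q]$, and $\eta$ sends it to the chain in $\operatorname{Ar}[n]$ whose vertices pair off the values of $\phi$ on the two halves $[p]$ and $[q]$ of the ordinal sum. A $k$-simplex of $NSd[n]=N\operatorname{Ar}[n]$ is an arbitrary chain $(i_0,j_0)\le\cdots\le(i_k,j_k)$, and the ordinal-sum structure forces the image of $\eta$ to consist precisely of the \emph{separated} chains, i.e. those for which every left-hand entry lies below every right-hand entry, equivalently $i_k\le j_0$. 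Thus the theorem reduces to the claim that the non-separated chains can be adjoined to this subcomplex through a finite chain of pushouts of inner horn inclusions $\Lambda^k[m]\hookrightarrow\Delta[m]$ with $0<k<m$.

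The core of the argument is then an inductive filling, which I would organize as a canonical pairing (an acyclic matching) on the non-separated simplices: for each non-separated chain one locates the first place where separation fails and uses this to match the chain with a canonical partner obtained by inserting or deleting a single interpolating vertex of the form $(i_s,j_t)$ straddling the crossing of the diagonal $i=j$. Concretely, a non-separated simplex appears as the missing $k$-th face $d_k$ of an inner horn whose filler is its matched partner, and one checks that all remaining faces are either separated or have already been attached at an earlier stage. Crucially, the first and last vertices of every chain are never disturbed by this operation, so the omitted index always satisfies $0<k<m$ and the horns are genuinely inner, which is exactly what upgrades the filtration from anodyne to \emph{weak anodyne}.

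The main obstacle is this combinatorial bookkeeping: defining the matching so that it is well defined on non-degenerate simplices, verifying that it is acyclic (no cycles among the pairings, so the attachments can be linearly ordered by dimension and a secondary complexity measuring the failure of separation), and confirming that at each step exactly one new simplex together with its missing interior face is filled in by a single inner horn. This is precisely the special-pasting analysis underlying the development of ordinal subdivision, and it is where essentially all the genuine work lies; once it is in place, the conclusion that $\eta$ is a weak anodyne extension is immediate.
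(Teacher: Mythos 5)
First, a point of comparison: the paper does not prove this statement at all --- it is imported verbatim from \cite[Theorem 6.1]{EhlersPorter} --- so the only thing to measure your proposal against is the strategy of the cited proof. Your reduction is the correct one and does agree with that strategy: since $Sd(\Delta[n])=\delta^*\sigma^*\Delta[n]$, one has $Sd(\Delta[n])_k\cong\Hom_{\Delta}([2k+1],[n])$, so $\eta$ embeds $Sd(\Delta[n])$ into $NSd[n]\cong N\operatorname{Ar}[n]$ as the subcomplex of \emph{separated} chains $(a_0,b_0)\le\cdots\le(a_k,b_k)$ with $a_k\le b_0$, and the theorem is equivalent to the assertion that the non-separated chains can be adjoined by finitely many pushouts of inner horn inclusions. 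Up to this point everything you say is right, and finiteness is unproblematic because $\operatorname{Ar}[n]$ is a finite poset.

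The genuine gap is that you stop exactly where the theorem begins. The matching is never actually defined, its acyclicity is never verified, and the claim that each attachment is a pushout of an inner horn $\Lambda^k[m]\hookrightarrow\Delta[m]$ with $0<k<m$ is asserted rather than checked --- your own phrases ``one checks'' and ``confirming that at each step'' are precisely the statements that constitute Ehlers--Porter's special-pasting analysis, i.e.\ the entire content of the theorem. Moreover, the one concrete formula you do offer is wrong: you propose inserting an interpolating vertex ``of the form $(i_s,j_t)$'' at the crossing, but when separation fails at a pair $t<s$ one has exactly $i_s>j_t$, so $(i_s,j_t)$ is not an object of $\operatorname{Ar}[n]$ at all (objects require $i\le j$); the interpolant must instead be of the form $(i_t,j_s)$, and making that choice canonical, involutive on non-degenerate simplices, and compatible with a linear ordering of the attachments (by dimension together with a secondary statistic) so that every face $d_i$, $i\ne k$, of each filler already lies in the previously attached part, is the delicate inductive work that the proposal defers. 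Note also that inner-ness of the horns is not automatic from ``the first and last vertices are never disturbed''; one must check that the inserted vertex is distinct from both endpoints of the chain and that the omitted face is the unique missing one. As written, the proposal is an accurate road map to the cited proof, not a proof.
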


We conclude with the deferred proof of a key lemma.

\begin{proof}[Proof of \cref{midanodyne}]
The simplicial adjunction in (a) is given by the theory of enriched Kan extensions as in \cite[Chapter 4]{Kelly}. 

We outline the key steps to construct the acyclic cofibration in (b), using the ordinal subdivision functor $Sd$ described above.
\begin{enumerate}
\item A careful analysis which makes use of \cref{epprop3.6} yields an isomorphism of categories
$$\operatorname{Ar}[n]=\Fun([1],[n])\cong Sd[n].$$

\item If $\delta\colon\Delta\to\Delta\times\Delta$ denotes the diagonal functor and $\sigma=p\circ i \colon \Delta \times \Delta \to \Delta$ denotes the ordinal sum on $\Delta$, the map of simplicial sets
$$\delta^*\sigma^*\Delta[n]\to NSd[n]$$
is the map from \cref{epthm6.1}, and therefore a Joyal equivalence by \cref{jtprop1.11}. By \cref{inneranodynecofibration}, the same map is in fact an acyclic cofibration in the Joyal model structure. 

\item Using \cref{rmkpstar(Delta)=W} and the formal properties of left adjoints, we obtain an isomorphism of simplicial sets
$$L\wW{n}\cong Lp^*\Delta[n]\cong\delta^*i^*p^*\Delta[n]= \delta^*\sigma^*\Delta[n].$$

\end{enumerate}
Combining (1), (2) and (3), we obtain the desired acyclic cofibration. The isomorphism in (c) follows from (b) and \cref{epprop3.6}. 

Part (d) is an immediate consequence of the fact that $(\tau_1, N)$ form a simplicial adjunction.
\end{proof}

\bibliographystyle{alpha}

\bibliography{bibliography}

\end{document}